\def\nh{{\small \sc \bf Nh2D}}
\def\sky{{\small \sc \bf Sky2D}}
\def\skyt{{\small \sc \bf Sky3D}}
\def\ani{{\small \sc \bf Ani3D}}
\def\nho{{\small \sc  Nh2D}}
\def\skyo{{\small \sc  Sky2D}}
\def\skyto{{\small \sc  Sky3D}}
\def\anio{{\small \sc  Ani3D}}
\newcommand{\ltwonorm}[1]{\left \| #1 \right \|_{2}}
\newcommand{\sq}{\hbox{\rlap{$\sqcap$}$\sqcup$}}
\newcommand{\qed}{\hspace*{\fill}\sq}
\newlength{\algrhswidth}
\title{Flexibly  Enlarged Conjugate Gradient methods}
\author{Sophie Moufawad \thanks{American University of Beirut (AUB), Beirut, Lebanon.  (sm101@aub.edu.lb) } }
\begin{document}
\maketitle \thispagestyle{empty}

\begin{abstract}
Enlarged Krylov subspace methods and their s-step versions were introduced \cite{EKS} in the aim of reducing communication when solving systems of linear equations $Ax=b$. These enlarged CG methods consist of enlarging the Krylov subspace by a maximum of $t$ vectors per iteration based on the domain decomposition of the graph of $A$.
As for the s-step versions, $s$ iterations of the enlarged Conjugate Gradient methods are merged in one iteration. 
The Enlarged CG methods and their s-step versions converge in less iterations than the classical CG, but at the expense of requiring more memory storage than CG. Thus in this paper we explore different options for reducing the memory requirements of these enlarged CG methods without affecting much their convergence.  
\end{abstract}

\begin{keywords} Linear Algebra, Iterative Methods, Krylov subspace methods, Conjugate Gradient methods, High Performance Computing, Minimizing Communication
 \end{keywords}

\section{Introduction}
\addtolength{\belowdisplayskip}{-3mm}
\addtolength{\abovedisplayskip}{-3mm}
Conjugate Gradient (CG) \cite{cgor} is a well-known Krylov subspace iterative method used for solving systems of linear equations $Ax = b$ with $n\times n$ symmetric positive definite (spd) sparse matrices $A$. Other Krylov subspace methods that solve general systems are Generalized Minimal Residual (GMRES) \cite{ssch}, bi-Conjugate Gradient \cite{bicg1,bicg2}, and bi-Conjugate Gradient Stabilized \cite{bicgstab}. CG is known for its short recurrence relations for defining the approximate solution $x_k \in x_0 + \mathcal{K}_k(A,r_0)$  at the $k^{th}$ iteration, where $\mathcal{K}_k(A,r_0) = \{r_0, Ar_0, \cdots, A^{k-1}r_0\}$ is the Krylov subspace of dimension at most $k$, $r_0 = b-Ax_0$ is the initial residual, and $x_0$ is the initial iterate or guess.
 This short recurrence leads to limited memory requirements (4 vectors of length $n$ and sparse matrix $A$).
However, CG iterations consist of BLAS 1 operations, specifically dot products and SAXPY's, and one BLAS2 operation, GAXPY or matrix-vector multiplication. Thus, when implementing CG on modern computer architectures, communication is a bottleneck in these BLAS operations due to technological reasons \cite{memTech}, where communication refers to data movement between different levels of memory hierarchy (sequential) and different processors/cores (parallel).

Different approaches have been adopted to reduce this effect such as pipelining communication and computation  in order to hide communication \cite{hidecg,hidecg2}, or replacing BLAS1 and BLAS2 operations by denser operations that may be parallelized more efficiently with less communication. The latter can be achieved using different strategies. The first is to merge $s$ iterations of CG to obtain the s-step method  \cite{sstepcg1,chronop}. Moreover, communication avoiding methods \cite{hoemmen}, which are based on s-step methods, introduce communication avoiding kernels that further reduce communication, even if at the expense of performing redundant computations. Other variants of s-step CG methods where introduced like \cite{adaptCG}.
The second strategy is to introduce new variants of CG by using different and larger Krylov subspaces, 
such as augmented CG \cite{augmKSM, augmKSM2, erhelASCG}, block CG \cite{bcg}, and enlarged CG \cite{EKS, sophiethesis} that are based on augmented, block, and enlarged Krylov subspaces, respectively. For a detailed comparison between these methods, refer to \cite{sstepECG}.

Several Enlarged Krylov subspace Conjugate Gradient methods were introduced \cite{EKS, sophiethesis} with the goal of obtaining  methods that converge faster than classical Krylov methods in terms of iterations and consequently parallel runtime, by performing denser operations per iteration that require less communications when parallelized. These methods approximate the solution of the system $Ax = b$
 iteratively by a sequence of vectors $x_k \in x_0 + \mathscr{K}_{k,t}(A,r_0)$ ($k>0$), obtained by imposing the Petrov-Galerkin condition, $\; r_k \perp \mathscr{K}_{k,t}(A,r_0)$, where $\mathscr{K}_{k,t} (A, r_0)$ is the enlarged Krylov subspace of dimension at most $tk$. $\mathscr{K}_{k,t} (A, r_0)$ is based on a domain decomposition of $A$ into $t$ disjoint subdomains, $\delta_i$ with $\delta = \bigcup\limits_{i = 1}^t \delta_i = \{1,2,\cdots,n\}$,  and for $i\neq j$,  $\delta_i\bigcap\delta_j=\phi$  \vspace{1mm}
\begin{eqnarray}\mathscr{K}_{k,t} (A, r_0) &=& \mbox{span}\{T^t(r_0), AT^t(r_0), A^2 T^t(r_0),\cdots , A^{k-1} T^t(r_0) \} \nonumber \\
&=&\mbox{span}\{ T_1^t(r_0),T_2^t(r_0), \cdots , T_t^t(r_0), \nonumber \\
&& \qquad \;\; AT_1^t(r_0),AT_2^t(r_0), \cdots , AT_t^t(r_0), \nonumber \\
&& \qquad \;\; \cdots  , \nonumber\\ \vspace{8mm}
&& \qquad \;\; A^{k-1} T_1^t(r_0), A^{k-1} T_2^t(r_0), \cdots , A^{k-1} T_t^t(r_0) \}. \nonumber \\ \nonumber
\end{eqnarray}\vspace{-6mm}

\noindent
$T^t(r_0) = \{ T_1^t(r_0),T_2^t(r_0), \cdots , T_t^t(r_0)\}$ is an operator that splits $r_0$ into $t$ vectors, 
with $T_i^t(r_0)$ being the operator that projects vector $r_0$ on subdomain $i$ of matrix $A$, $\delta_i$. The matrix $[T^t(v)]$ is shown in \eqref{eq:Tt}.

 The enlarged CG versions introduced in \cite{EKS} are MSDO-CG, LRE-CG, SRE-CG, SRE-CG2, and the truncated SRE-CG2. Note that in \cite{ECG} a block CG method based on SRE-CG is proposed, whereby one system with multiple right-hand sides is solved by defining $R_0$  using the enlarged Krylov subspace, i.e. $R_0 = [T^t(r_0)]$, and the solution of $Ax = b$ is the sum of the block solution $X_k$.   
The enlarged CG versions introduced in \cite{EKS} perform block operations, yet they are not Block methods as they do not solve a  system with multiple right-hand sides. Moreover, the residual vectors $r_k = b-Ax_k$ are not orthogonal to the enlarged Krylov subspace by definition. Hence the need for A-orthonormalizing the basis vectors to impose the Petrov-Galerkin condition.

Recently, s-step Enlarged CG versions were introduced \cite{sstepECG} by merging $s$ iterations of the enlarged CG methods.
As shown in \cite{EKS} and \cite{sstepECG} the Enlarged CG methods and their s-step versions converge in less iterations than the classical CG, but at the expense of requiring more memory storage than CG. Thus in this paper we explore different options for reducing the memory requirements of these enlarged CG methods, specifically SRE-CG2 (section \ref{sec:SRECG2}) and MSDO-CG (section \ref{sec:msdocg}), without affecting much their convergence. Apart from the truncated versions introduced in \cite{EKS} and the well-known restarting versions (section \ref{sec:trunc}), we introduce flexibly enlarged versions (section \ref{sec:flex}) where after some iteration (to be determined) the number of computed basis vectors is reduced to half. Then, all these versions are tested in section \ref{sec:testing}, to check their convergence along with the reduced memory requirements.

\section{Enlarged CG Methods}
The enlarged CG methods are projection iterative methods that seek at iteration $k$ an approximate solution to the $n\times n$ system of linear equations $Ax=b$ from the enlarged Krylov Subspace by imposing the Petrov-Galerkin constraint on the $k^{th}$ residual $r_k$,
\begin{enumerate}
    \item Subspace Condition $x_k\in x_0+\mathscr{K}_{k,t}(A,r_0)$
    \item Orthonality Condition $r_k \perp \mathscr{K}_{k,t}(A,r_0)$
\end{enumerate}
where $x_0$ is the initial guess, $r_0=b-Ax_0$ the initial residual, and $r_k=b-Ax_k$. The enlarged Krylov subspace is based on a domain decomposition of the matrix $A$, where the index domain $\delta = \{1,2,...,n\}$ is divided into $t$ disjoint subdomains $\delta_i$, with $\delta_i\bigcap\delta_j=\phi$ for $i\neq j$, and $\delta = \bigcup\limits_{i = 1}^t \delta_i$. Then, the enlarged Krylov subspace is given by
$$\mathscr{K}_{k,t}(A,r_0) = \mbox{span}\,\{T^t(r_0), AT^t(r_0),...,A^{k-1}T^t(r_0)\},$$ where $T^t(r_0)$ is an operator that projects $r_0$ on the $t$ subdomains $\delta_i$ producing a set of $t$ vectors $\{T_1^t(r_0),T_2^t(r_0),\cdots ,T_t^t(r_0)\}$ as shown in \eqref{eq:Tt} with $T_j^t(r_0)$ being the  operator that projects $r_0$ on the subdomain $\delta_j$.
\subsection{SRE-CG2}\label{sec:SRECG2}
 The short recurrence enlarged CG versions are iterative enlarged Krylov subspace projection methods that build at the  $k^{th}$ iteration, an A-orthonormal candidate basis $Q_k$ ($Q_k^\mathsf{T}AQ_k = I$) for the enlarged Krylov subspace $\mathscr{K}_{k,t}(A,r_0)$  and approximate the solution, $x_k = x_{k-1}+Q_k\alpha_k$, by imposing the orthogonality condition on $r_k = r_{k-1} - AQ_k\alpha_k$, ($r_k \perp \mathscr{K}_{k,t}$). Then, $\alpha_k=(Q_k^\mathsf{T}AQ_k)^{-1}(Q_k^\mathsf{T}r_k)$ is obtained by minimizing $\phi(x_{k-1}+Q_k\alpha)$ where $Q_k$ is an $n\times kt$ matrix, $.^\mathsf{T}$ denotes the transpose of a matrix, 
 and $$\phi(x) = \frac{1}{2}x^\mathsf{T}Ax - x^\mathsf{T}b.\vspace{1mm}
 $$
 
It was proven in \cite{EKS} that at each iteration $k\geq 4$, the $t$ newly computed basis vectors $W_k = AW_{k-1}$ are A-orthogonal to $W_i$ for $i\leq k-3$, where $W_1 = [T^t(r_0)]$ is the matrix containing the $t$ vectors of $T^t(r_0)$. This leads to a short recurrence in $x_k = x_{k-1}+W_k\alpha_k$ and $r_k = r_{k-1} - AW_k\alpha_k$, where $\alpha_k = W_k^\mathsf{T}r_{k-1}$. Moreover, $W_k$ has to be A-orthonormalized only against $W_{k-1}$ and $W_{k-2}$ using CGS2 A-orthonormalization method (Algorithm 18, \cite{sophiethesis}), and then against itself using A-CholQR \cite{A-ortho} or Pre-CholQR \cite{A-ortho2, sophiethesis}. This is the SRE-CG method that requires storing at most three block vectors $W_{k-2}, W_{k-1}, W_k$.

However, in finite arithmetic there is a loss of A-orthogonality at the $k^{th}$ iteration between the vectors of $Q_k = [W_1, W_2, ..., W_k]$ which is reflected on the number of iterations needed till convergence in the SRE-CG method. Moreover, once $W_k = AW_{k-1}$ is A-orthonormalized against all previous $W_i$'s for $i=1,2,..,k-1$ in the SRE-CG2 method (Algorithm \ref{alg:SRE-CG2}), the number of iterations was vastly reduced as compared to SRE-CG. Yet, this comes at the expense of performing more operations for the A-orthonormalization per iteration and requiring more memory storage, specifically storing all the block vectors $W_{1}, W_{2}, \cdots , W_k$.

 \begin{algorithm}[h!]
\centering
\caption{ SRE-CG2  }
{\renewcommand{\arraystretch}{1.3}
\begin{algorithmic}[1]
\Statex{\textbf{Input:} $A$,  $n \times n$ symmetric positive definite matrix; $k_{max}$, maximum allowed iterations}
\Statex{\qquad \quad $b$,  $n \times 1$ right-hand side; $x_0$, initial guess; $\epsilon$, stopping tolerance}
\Statex{\textbf{Output:} $x_k$, approximate solution of the system $Ax=b$}\vspace{2mm}
\State$r_0 = b - Ax_0$,\;\; $\rho_0 = ||r_0||_2$ ,
\;\; $k = 1$; 
\While {( ${\rho}_{k-1} > \epsilon {\rho_0}$ and $k < k_{max}$ )}
\If {($k==1$)}\vspace{1mm}
\State A-orthonormalize $W_k = [T^t(r_{k-1})]$,  and let $Q = W_k$ 
\Else \vspace{0.5mm}
\State Let $W_k = AW_{k-1}$
\State A-orthonormalize $W_k$ against $Q$
\State A-orthonormalize $W_k$ and let $Q = [Q \; W_k]$ \vspace{0.5mm}
\EndIf\vspace{1mm}
\State ${\alpha}_k = W_k^\mathsf{T} r_{k-1}$
\State $x_k = x_{k-1} + W_k{\alpha}_k $  
\State $r_k = r_{k-1} - AW_k{\alpha}_k $ 
\State $\rho_k = ||r_{k}||_2$, \;\;   $k = k+1$
\EndWhile
\end{algorithmic}}
\label{alg:SRE-CG2}
\end{algorithm}

\subsection{MSDO-CG}\label{sec:msdocg}
The MSDO-CG method \cite{EKS} computes search directions that belong to the enlarged Krylov Subspace $ \mathscr{K}_{k,t} (A,r_0)$, rather than computing basis vectors. Specifically, at each iteration $k$,  $t$ search directions $P_k$ are computed as in \eqref{eq:pk} and then \\ A-orthonormalized against all $P_i$'s ($i<k$), to impose the orthogonality condition on \\$r_k = r_{k-1} - AP_k\alpha_k$.

\begin{equation}\label{eq:pk}
    \begin{cases}
    P_k =  \mathscr{T}^t_{k-1} + P_{k-1}diag(\beta_k)&\\
    P_0 = \mathscr{T}^t_{0}&
\end{cases}\end{equation}
 where $\beta_k = - P_{k-1}^t A r_{k-1}$, and $\mathscr{T}^t_{i} = [T^t(r_i)]$ is the matrix containing the $t$ vectors of $T^t(r_i)$. 
  Then, the approximate solution is defined as $x_k = x_{k-1} + P_k\alpha_k$, where $\alpha_k  =  P_k^tr_{k-1}$ is computed by minimizing $\phi(x_{k-1} + P_k\alpha)$, as shown in Algorithm \ref{alg:MSDO-CG}. 
 
   \begin{algorithm}[h!]
\centering
\caption{ MSDO-CG  }
{\renewcommand{\arraystretch}{1.3}
\begin{algorithmic}[1]
\Statex{\textbf{Input:} $A$,  $n \times n$ symmetric positive definite matrix; $k_{max}$, maximum allowed iterations}
\Statex{\qquad \quad $b$,  $n \times 1$ right-hand side; $x_0$, initial guess; $\epsilon$, stopping tolerance }
\Statex{\textbf{Output:} $x_k$, approximate solution of the system $Ax=b$}\vspace{2mm}
\State$r_0 = b - Ax_0$,\;\; $\rho_0 = ||r_0||_2$ ,
\;\;$k = 1$,
\While {( ${\rho}_{k-1} > \epsilon\; {\rho_0} $ and $k < k_{max}$ )}
\If {($k==1$)}
\State A-orthonormalize $P_1 = [T^t(r_{k-1})]$,   let $V_1=AP_1$ and $Q = P_1$ 
\Else 
\State Let $\beta_k = - V_{k-1}^\mathsf{T} r_{k-1}$
\State Let $P_k = [{T}^t(r_{k-1})]+P_{k-1}diag(\beta_k)$ 
\State A-orthonormalize $P_k$ against $Q$
\State A-orthonormalize $P_k$,  let $V_k=AP_k$ and $Q = [Q \; P_k]$ 
\EndIf
\State ${\alpha}_k = P_k^\mathsf{T} r_{k-1}$ 
\State $x_k = x_{k-1} + P_k{\alpha}_k $ 
\State $r_k = r_{k-1} - V_k{\alpha} $  
\State $\rho_k = ||r_{k}||_2$, $k = k+1$  
\EndWhile
\end{algorithmic}}
\label{alg:MSDO-CG}
\end{algorithm}

In \cite{sstepECG}, with the aim of introducing s-step versions of the enlarged CG method, a modified MSDO-CG was introduced that searches for the solution in the modified Krylov subspace 
$$x_k\in x_0+ \overline{\mathscr{K}}_{k,t} (A,r_0)\vspace{-1mm}$$
$$ \overline{\mathscr{K}}_{k,t} (A,r_0) = \mbox{span}\,\{T^t(r_0), T^t(r_1), \cdots, T^t(r_{k-1})\}$$ by building an A-orthonormal basis similarly to SRE-CG2, with the difference that at each iteration  $W_k = [T^t(r_{k-1})]$. Thus, the Modified MSDO-CG algorithm is the same as the SRE-CG2 algorithm \ref{alg:SRE-CG2} with the exception of line 6 that is replaced by:\vspace{1mm}
$$\mbox{Let } W_k = [T^t(r_{k-1})].\vspace{1mm} $$ 

At iteration $k$ in both MSDO-CG and modified MSDO-CG algorithms, there is a need to A-orthonomalize a block of $t$ vectors, be it $P_k$ or $W_k$, against all $k-1$ previous  blocks of $t$ vectors. Thus, there is a need to  store all these $k$ block vectors, even though we have short recurrence formulae for $x_k$ and $r_k$.

\section{Truncated and Restarted Enlarged CG methods}\label{sec:trunc}
In the three methods, SRE-CG2, MSDO-CG, and Modified MSDO-CG, the iterates have short recurrence formulae. However, due to the A-orthonormalization process there is a need to store all  the generated block vectors. One option is to truncate the A-orthonormalization process by A-orthonormalizing $W_k$ against the previous $trunc$ blocks only, for $  2 \leq trunc \leq k_{max}$. Thus, requiring the storage of only the last $trunc+1$ block vectors. 

The truncated SRE-CG2 method was introduced in \cite{EKS} to reduce the memory requirements of SRE-CG2. Note that for $trunc = 2$, we get the SRE-CG method, and for  $trunc = k_{max}$ we get the SRE-CG2 method. The truncated SRE-CG2 algorithm differs from Algorithm \ref{alg:SRE-CG2} in line 8, where for $k>trunc$\vspace{1mm} $$Q= [W_{k-trunc+1}, W_{k-trunc+2}, \cdots, W_k].$$  

As for MSDO-CG or Modified MSDO-CG, it is possible to truncate the A-orthonormalization process similarly to SRE-CG2.  The truncated MSDO-CG algorithm differs from Algorithm \ref{alg:MSDO-CG} in line 9, where for $k>trunc$\vspace{1mm}  $$Q= [P_{k-trunc+1}, P_{k-trunc+2}, \cdots, P_k].$$ Yet, unlike SRE-CG2, there is no guarantee that the method will converge, as theoretically there is a need for all the search directions $P_k$ or all the basis vectors $W_k$ to be A-orthonormal. 

On the other hand, restarting SRE-CG2 or MSDO-CG or Modified MSDO-CG after $j$ iterations implies that all the information obtained in the matrix $Q$ is lost, be it the basis vectors of $\mathscr{K}_{j,t}$, or the search directions from $\mathscr{K}_{j,t}$ or the basis vectors of $\overline{\mathscr{K}}_{j,t}$ respectively.  Then the last approximate solution $x_j$ is used as an initial guess for the new cycle of $j$ iterations. Thus, at most $j+1$ block vectors are stored. However, for the stopping criteria we always compare to the initial residual ($\ltwonorm{r_k}/\ltwonorm{r_0} < tol$ for $k\geq j$). Thus, restarted SRE-CG2 algorithm and restarted MSDO-CG are respectively Algorithms \ref{alg:SRE-CG2} and \ref{alg:MSDO-CG} where line 3 is replaced by \vspace{1mm} $$ \textbf{if}\;(k \;mod\; j \bf == 1) \;\textbf{ then}$$
Both truncated and restarted versions have the same fixed memory requirement for $j = trunc$. However, restarted versions requires less flops than their corresponding truncated versions, specifically in the A-orthonormalization  (line 7 of Algorithm\ref{alg:SRE-CG2}, line 8 of Algorithm\ref{alg:MSDO-CG}), since after $j = trunc$ iterations the size of the $Q$ matrix in the restarted versions varies from $n\times t$ to $n\times tj$, whereas it is fixed to $n\times tj$ in the truncated version.

Note that it is possible to restart SRE-CG2 and MSDO-CG not every $j$ iterations, but once some measure of change in the residuals is less than a restartTol. We use, similarly to the flexibly enlarged CG versions, the relative difference of the residual norm, i.e we restart once \vspace{-3mm}$$\dfrac{| \ltwonorm{r_{k+1}}- \ltwonorm{r_{k}}|}{\ltwonorm{r_{0}}} < restartTol.$$
These restarted SRE-CG2 and restarted MSDO-CG are respectively Algorithms \ref{alg:SRE-CG2} and \ref{alg:MSDO-CG} where line 3 is replaced by \vspace{1mm} $$ \textbf{if}\;(k \bf == 1) \;or\;(\,|\ltwonorm{r_{k+1}}- \ltwonorm{r_{k}}|/\ltwonorm{r_{0}}<restartTol\,)\textbf{ then}$$
However, in this case the maximum needed memory requirement is not known beforehand, as the restarts will depend on the used $restartTol$ and the matrix at hand.

\section{Flexibly  Enlarged CG methods} \label{sec:flex}

The concept behind  ``flexibly" enlarged CG versions is that after some iterations $k_F$ (to be determined) the number of computed  vectors, be it basis vectors or search direction vectors, is reduced to half. This implies that at iteration $k_F+1$ the $t/2$ computed basis or search direction vectors are based on the last residual $r_{k_F}$,  specifically $T^{t/2}(r_{k_F})$. Similarly to $T^t(r_0)$,  $T^{t/2}(r_{k_F})$ is the operator that projects the vector $r_{k_F}$ over the $t/2$ subdomains 
$\tilde{\delta}_i$  where for $i = 1,2,...,t/2$ \vspace{-1mm}
$$\tilde{\delta}_i = \delta_{2i-1} \bigcup  \delta_{2i} \mbox{ \;with\;} \delta = \bigcup\limits_{i = 1}^{t/2} \tilde{\delta}_i = \bigcup\limits_{i = 1}^{t} {\delta}_i.\vspace{-1mm}$$ The comparison between the $n\times t$ matrix $[T^{t}({v})]$ and the $n\times t/2$ matrix $[T^{t/2}({v})]$ for some vector $v$ that is partitioned into $t$ parts is shown in \eqref{eq:Tt}
\begin{equation}\label{eq:Tt}
v=  \begin{bmatrix}
*  \\
\vdots\\
*  \\
 {\color{cyan}*} \\
 {\color{cyan}\vdots}   \\ 
 {\color{cyan}*}\\
\vdots\\
 {\color{olive}*} \\
 {\color{olive}\vdots} \\
 {\color{olive}*}\\ 
{\color{violet}*}\\
{\color{violet}\vdots}\\
{\color{violet}*}\\
\end{bmatrix}\quad
[T^{t}(v)] = 
\begin{bmatrix}
* & 0 & &0  & 0 \\
\vdots & \vdots&&\vdots &\vdots\\
* & 0 &&0 & 0 \\
0 & {\color{cyan}*} && \vdots &\vdots\\
\vdots  &  {\color{cyan}\vdots}  &&\vdots&   \vdots\\ 
0& {\color{cyan}*}&&0&0\\
&&\ddots&&\\
0 & 0 &&   {\color{olive}*}&0\\
\vdots & \vdots  &&{\color{olive}\vdots}&\vdots\\
0& 0  &&{\color{olive}*}&0\\ 
0&0&&0&{\color{violet}*}\\
\vdots& \vdots&&\vdots&{\color{violet}\vdots}\\
0&0&&0&{\color{violet}*}\\
\end{bmatrix}_{n \times t} \quad
[T^{\frac{t}{2}}(v)]=
\begin{bmatrix}
*  && 0 \\
\vdots && \vdots\\
* && 0  \\
 {\color{cyan}*} && 0\\
 {\color{cyan}\vdots}  && \vdots  \\ 
 {\color{cyan}*}&&0\\
&\ddots&\\
0 && {\color{olive}*} \\
\vdots && {\color{olive}\vdots} \\
0&& {\color{olive}*}\\ 
0&&{\color{violet}*}\\
\vdots&& {\color{violet}\vdots}\\
0&&{\color{violet}*}\\
\end{bmatrix}_{n \times \frac{t}{2}}
\end{equation}
We start by defining the Flexibly  enlarged Krylov subspace and proving it is a superset to the Krylov subspace, then we define the switching condition and the flexibly SRE-CG2 and MSDO-CG algorithms. Finally, we derive the preconditioned flexibly SRE-CG2 and MSDO-CG methods.  
\subsection{Flexibly  Enlarged Krylov Subspace}
\noindent With this reduction in dimension, the approximate solution $x_k$ no longer belongs to the enlarged subspace $x_0+\mathscr{K}_{k,t}(A,r_0)$, but it belongs to the flexibly enlarged subspace\vspace{-5mm}\\
\begin{equation}
x_k \in 
    \begin{cases}
        x_0 + \mathscr{K}_{k,t}(A,r_0),& \quad if\; k\leq k_F\\
        x_0 + \mathscr{K}_{k_F,t}(A,r_0)+\mathscr{K}_{k-k_F,t/2}(A,r_{k_F}),& \quad if\; k> k_F
    \end{cases}
\end{equation}

If the flexibly enlarged method converges before $k_F$, then it is equivalent to the original enlarged method where $x_k \in x_0 + \mathscr{K}_{k,t}(A,r_0)$. In this case, as shown in \cite{EKS} the Krylov subspace $\mathcal{K}_k(A,r_0) \subseteq \mathscr{K}_{k,t}(A,r_0)$, which validates the use of the enlarged Krylov subspace.
If the flexibly enlarged method converges in $k>k_F$ iterations, then we need to prove that the flexibly enlarged Krylov Subspace is a superset to $\mathcal{K}_k(A,r_0)$.  The following theorems lead to this conclusion.

\begin{theorem}\label{thrm:kry1}
    Let $1 \leq k_F < k$. Then, the Krylov subspace $$\mathcal{K}_k(A,r_0)= \mathcal{K}_{k_F}(A,r_0)+\mathcal{K}_{k-k_F}(A,r_{k_F})$$
    where $r_{k_F}\in \mathcal{K}_{k_F+1}(A,r_0)$.
\end{theorem}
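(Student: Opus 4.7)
The plan is to establish the displayed equality by proving the two containments separately, deferring the harder direction to an inductive polynomial argument. The easier inclusion $\mathcal{K}_{k_F}(A,r_0)+\mathcal{K}_{k-k_F}(A,r_{k_F}) \subseteq \mathcal{K}_k(A,r_0)$ follows directly from the hypothesis that $r_{k_F}\in \mathcal{K}_{k_F+1}(A,r_0)$: since $k_F<k$, trivially $\mathcal{K}_{k_F}(A,r_0)\subseteq \mathcal{K}_k(A,r_0)$, and for $0\leq j\leq k-k_F-1$ we have $A^j r_{k_F}\in \mathcal{K}_{k_F+1+j}(A,r_0)\subseteq \mathcal{K}_k(A,r_0)$, so every generator of $\mathcal{K}_{k-k_F}(A,r_{k_F})$ lies in $\mathcal{K}_k(A,r_0)$ and the sum is contained in $\mathcal{K}_k(A,r_0)$.

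For the reverse inclusion I would first record the polynomial form of $r_{k_F}$. Because $r_{k_F}=b-Ax_{k_F}$ with $x_{k_F}\in x_0+\mathcal{K}_{k_F}(A,r_0)$, one can write $x_{k_F}-x_0=q(A)\,r_0$ for some polynomial $q$ of degree at most $k_F-1$, so that
\begin{equation*}
r_{k_F}=r_0-Aq(A)\,r_0=\sum_{i=0}^{k_F}c_i\,A^i r_0,\qquad c_0=1.
\end{equation*}
This simultaneously confirms the parenthetical assertion $r_{k_F}\in \mathcal{K}_{k_F+1}(A,r_0)$. The forward inclusion is then proved by induction on $j$: $A^j r_0\in \mathcal{K}_{k_F}(A,r_0)+\mathcal{K}_{k-k_F}(A,r_{k_F})$ for every $0\leq j\leq k-1$. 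For $j<k_F$ the claim is immediate. For $k_F\leq j\leq k-1$, I would expand $A^{j-k_F}r_{k_F}=\sum_{i=0}^{k_F}c_i\,A^{j-k_F+i}r_0$, isolate the top-degree term $c_{k_F}\,A^j r_0$, and solve for $A^j r_0$; by the inductive hypothesis the remaining terms $A^{j-k_F+i}r_0$ for $0\leq i\leq k_F-1$ already belong to the sum, and $A^{j-k_F}r_{k_F}$ itself belongs to $\mathcal{K}_{k-k_F}(A,r_{k_F})$ because $j-k_F\leq k-k_F-1$.

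The principal obstacle I expect is justifying that the leading coefficient $c_{k_F}$ is nonzero, since the inductive step divides by it. This is the non-degeneracy condition implicit in the statement: if the enlarged CG method has not terminated at step $k_F$, then the Petrov--Galerkin construction forces $r_{k_F}\notin \mathcal{K}_{k_F}(A,r_0)$, hence $c_{k_F}\neq 0$. In the degenerate case $c_{k_F}=0$ the grade of $r_0$ is at most $k_F$, so $\mathcal{K}_k(A,r_0)=\mathcal{K}_{k_F}(A,r_0)$ and the identity holds trivially. Combining the two inclusions then yields the announced equality $\mathcal{K}_k(A,r_0)=\mathcal{K}_{k_F}(A,r_0)+\mathcal{K}_{k-k_F}(A,r_{k_F})$.
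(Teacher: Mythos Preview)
Your proposal is correct and follows essentially the same approach as the paper: both arguments prove the two inclusions separately, handle the easy containment by expanding $r_{k_F}$ in the $A^i r_0$ basis, and handle the reverse containment by an induction that isolates the highest-degree term and divides by the leading coefficient $c_{k_F}$ (the paper inducts on $k$ rather than on the monomial power $j$, but the mechanics of each step are identical). Your explicit treatment of the case $c_{k_F}=0$ is an improvement over the paper, which divides by this coefficient without comment.
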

\begin{proof} We prove that each set is subset of the other.\\
    1. $\mathcal{K}_{k_F}(A,r_0)+\mathcal{K}_{k-k_F}(A,r_{k_F}) \subseteq \mathcal{K}_k(A,r_0)$:\\
    Let $y\in \mathcal{K}_{k_F}(A,r_0)+\mathcal{K}_{k-k_F}(A,r_{k_F})$, then since $r_{k_F}= \sum\limits_{j=1}^{k_F+1} c_jA^{j-1}r_0$
    \begin{eqnarray}
        y &=& \sum\limits_{i=1}^{k_F} a_iA^{i-1}r_0 + \sum\limits_{i=1}^{k-k_F} b_iA^{i-1}r_{k_F} \;=\;  \sum\limits_{i=1}^{k_F} a_iA^{i-1}r_0 + \sum\limits_{i=1}^{k-k_F} \sum\limits_{j=1}^{k_F+1}  b_ic_jA^{i+j-2}r_0  \nonumber\\  
                &=&  \sum\limits_{i=1}^{k_F} a_iA^{i-1}r_0 + \sum\limits_{i=1}^{k} \alpha_i A^{i-1} r_0  \;\; = \;\; \sum\limits_{i=1}^{k} \tilde{\alpha}_i A^{i-1} r_0 \;\;\in \;  \mathcal{K}_k(A,r_0)\nonumber
        \end{eqnarray}
    2. $\mathcal{K}_k(A,r_0) \subseteq \mathcal{K}_{k_F}(A,r_0)+\mathcal{K}_{k-k_F}(A,r_{k_F})$ by induction:
    \begin{itemize}
        \item Base Case: $k = k_F + 1$\\
        Let $y\in \mathcal{K}_k(A,r_0)$, then \begin{eqnarray}
            y &=&  \sum\limits_{i=1}^{k_F+1} a_iA^{i-1}r_0\;\;=\;\;\sum\limits_{i=1}^{k_F} a_iA^{i-1}r_0 + a_{k}A^{k_F}r_0 \nonumber\\&=& \sum\limits_{i=1}^{k_F} a_iA^{i-1}r_0 + \dfrac{a_{k}}{c_k}r_{k_F} -\dfrac{a_{k}}{c_k} \sum\limits_{j=1}^{k_F} c_jA^{j-1}r_0 \;\;\in \mathcal{K}_{k_F}(A,r_0)+\mathcal{K}_{1}(A,r_{k_F})\nonumber
        \end{eqnarray}
        \item Suppose that $\mathcal{K}_k(A,r_0) \subseteq \mathcal{K}_{k_F}(A,r_0)+\mathcal{K}_{k-k_F}(A,r_{k_F})$.
        \item Prove that $\mathcal{K}_{k+1}(A,r_0) \subseteq \mathcal{K}_{k_F}(A,r_0)+\mathcal{K}_{k-k_F+1}(A,r_{k_F})$.\vspace{2mm}\\
        Let $y_{k+1}\in \mathcal{K}_{k+1}(A,r_0)$, and $y_{k}\in \mathcal{K}_{k}(A,r_0)  \subseteq \mathcal{K}_{k_F}(A,r_0)+\mathcal{K}_{k-k_F}(A,r_{k_F})$, then \vspace{-1mm}
        \begin{eqnarray}
            y_{k+1} &=&  \sum\limits_{i=1}^{k+1} a_iA^{i-1}r_0\;\;=\;\;\sum\limits_{i=1}^{k} a_iA^{i-1}r_0 + a_{k+1}A^{k}r_0 \;\;=\;\;\sum\limits_{i=1}^{k} a_iA^{i-1}r_0+ a_{k+1}A^{k-k_F}A^{k_F}r_0 \vspace{-2mm}\nonumber\\
            &=&\sum\limits_{i=1}^{k} a_iA^{i-1}r_0 + \dfrac{a_{k+1}}{c_{k_F+1}}A^{k-k_F}r_{k_F} -\dfrac{a_{k+1}}{c_{k_F+1}} \sum\limits_{j=1}^{k_F} c_jA^{k-k_F}A^{j-1}r_0  \vspace{-2mm}\nonumber\\
            &=&\sum\limits_{i=1}^{k} a_iA^{i-1}r_0 + \sum\limits_{j=k-k_F+1}^{k} \tilde{c}_jA^{j-1}r_0 + \dfrac{a_{k+1}}{c_{k_F+1}}A^{k-k_F}r_{k_F}    \vspace{-2mm}\nonumber  \\
&=&  \sum\limits_{i=1}^{k_F} a_iA^{i-1}r_0 + \sum\limits_{i=1}^{k-k_F} b_iA^{i-1}r_{k_F} +\dfrac{a_{k+1}}{c_{k_F+1}}A^{k-k_F}r_{k_F} \;\;=\;\;y_k+\dfrac{a_{k+1}}{c_{k_F+1}}A^{k-k_F}r_{k_F}\vspace{-2mm}\nonumber     \\    
        \implies  y_{k+1}  &=& \sum\limits_{i=1}^{k_F} a_iA^{i-1}r_0+ \sum\limits_{i=1}^{k-k_F+1} b_iA^{i-1}r_{k_F}  \;\;\in \;\mathcal{K}_{k_F}(A,r_0)+\mathcal{K}_{k-k_F+1}(A,r_{k_F}).\nonumber
        \end{eqnarray}
    \end{itemize}\vspace{-8mm}
\hspace{15cm}
\end{proof}\vspace{6mm}
\begin{theorem}\label{thrm:kry2}
    The Krylov subspace $\mathcal{K}_k(A,v)$ is a subset of the enlarged Krylov subspace $\mathscr{K}_{k,t}(A,v)$ for any vector $v$ and integer $t\geq 1$.
\end{theorem}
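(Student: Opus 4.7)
The plan is to exploit the fact that the splitting operator $T^t$ is based on a partition of the index set $\delta = \{1,\ldots,n\}$ into disjoint subdomains $\delta_1,\ldots,\delta_t$ whose union is all of $\delta$. Consequently, for any vector $v$, the projections $T_i^t(v)$ have disjoint supports that cover all indices, so they reconstruct $v$:
\begin{equation}
v \;=\; \sum_{i=1}^{t} T_i^t(v). \nonumber
\end{equation}
This is the one identity that drives everything; it is visible directly from the block matrix $[T^t(v)]$ displayed in \eqref{eq:Tt}, where summing the $t$ columns recovers $v$.

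Given this, I would take an arbitrary $y \in \mathcal{K}_k(A,v)$, write it as a polynomial in $A$ applied to $v$, and then distribute through the splitting. Concretely, $y = \sum_{j=0}^{k-1} a_j A^j v = \sum_{j=0}^{k-1} \sum_{i=1}^{t} a_j A^j T_i^t(v)$. Every summand $A^j T_i^t(v)$ for $0 \leq j \leq k-1$ and $1 \leq i \leq t$ is one of the spanning vectors of $\mathscr{K}_{k,t}(A,v)$ as written in the definition of the enlarged Krylov subspace in the introduction. Hence $y \in \mathscr{K}_{k,t}(A,v)$, which is the desired inclusion.

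I do not anticipate any real obstacle here: the result is essentially a direct unpacking of the definitions of $T^t$ and $\mathscr{K}_{k,t}$, and the argument is one line once the reconstruction identity $v = \sum_{i=1}^t T_i^t(v)$ is recorded. The only thing to be slightly careful about is the indexing convention (powers starting from $A^0$ versus $A^{k-1}$), which should be stated to match the definition used earlier in the paper. This theorem, combined with Theorem \ref{thrm:kry1}, then immediately gives $\mathcal{K}_k(A,r_0) \subseteq \mathscr{K}_{k_F,t}(A,r_0) + \mathscr{K}_{k-k_F,t/2}(A,r_{k_F})$, justifying the flexibly enlarged subspace as a valid enrichment of the classical Krylov subspace.
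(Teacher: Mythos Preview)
Your proposal is correct and follows essentially the same approach as the paper's proof: record the reconstruction identity $v=\sum_{i=1}^t T_i^t(v)$ (the paper writes it as $v=[T^t(v)]\cdot\mathbbm{1}_t$), expand an arbitrary $y\in\mathcal{K}_k(A,v)$ as a polynomial in $A$ applied to $v$, and distribute over the splitting to land in the span defining $\mathscr{K}_{k,t}(A,v)$. The only cosmetic difference is the indexing of the powers of $A$, which you already flagged.
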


\begin{proof}
    Let $y\in \mathcal{K}_k(A,v)$, where $\mathcal{K}_k(A,v)=\{v, Av,\cdots ,A^{k-1}v\}$. Then,
    $$y = \sum\limits_{i=1}^{k} a_iA^{i-1}v = \sum\limits_{i=1}^{k} a_iA^{i-1} [T^t(v)]* \mathbbm{1}_t =  \sum\limits_{i=1}^{k} \sum\limits_{j=1}^{t} a_iA^{i-1}T^t_j(v)\;\; \in\; \mathscr{K}_{k,t}(A,v)\vspace{1mm}$$
    \noindent since by definition \eqref{eq:Tt} $v= [T^t(v)]* \mathbbm{1}_t = [T^t_1(v)\, T^t_2(v)\,\cdots T^t_t(v)]* \mathbbm{1}_t = \sum\limits_{j=1}^{t} T^t_j(v).$\quad\quad
\end{proof}

\noindent As a corollary of theorems \ref{thrm:kry1} and \ref{thrm:kry2} we get the following result.
\begin{corollary}
The Krylov subspace $\mathcal{K}_k(A,r_0)$ is a subset of the flexibly enlarged Krylov subspace for $k>k_F$ and the even integer $t\geq 2$, i.e. $$\mathcal{K}_k(A,r_0) \subseteq \mathscr{K}_{k_F,t}(A,r_0)+\mathscr{K}_{k-k_F,t/2}(A,r_{k_F})$$
 where $r_{k_F}\in \mathcal{K}_{k_F+1}(A,r_0)$.
\end{corollary}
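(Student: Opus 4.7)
The plan is to chain the two theorems together in the obvious way: first use Theorem \ref{thrm:kry1} to decompose the standard Krylov subspace $\mathcal{K}_k(A,r_0)$ at the switching index $k_F$, then apply Theorem \ref{thrm:kry2} to each of the two resulting pieces (with different values of the splitting parameter) to embed each piece into the corresponding enlarged Krylov subspace.

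More concretely, since $k>k_F\geq 1$, Theorem \ref{thrm:kry1} gives the equality
$$\mathcal{K}_k(A,r_0) = \mathcal{K}_{k_F}(A,r_0)+\mathcal{K}_{k-k_F}(A,r_{k_F}),$$
with the side fact that $r_{k_F}\in\mathcal{K}_{k_F+1}(A,r_0)$ (which is exactly the statement we will record in the corollary). Then I would take any $y$ in the right-hand side, write it as $y=y_1+y_2$ with $y_1\in \mathcal{K}_{k_F}(A,r_0)$ and $y_2\in \mathcal{K}_{k-k_F}(A,r_{k_F})$, and invoke Theorem \ref{thrm:kry2} twice: once with vector $v=r_0$ and splitting parameter $t$, giving $y_1\in\mathscr{K}_{k_F,t}(A,r_0)$, and once with vector $v=r_{k_F}$ and splitting parameter $t/2$ (which is a positive integer because $t\geq 2$ is even), giving $y_2\in\mathscr{K}_{k-k_F,t/2}(A,r_{k_F})$. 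Adding these inclusions yields
$$y=y_1+y_2\in \mathscr{K}_{k_F,t}(A,r_0)+\mathscr{K}_{k-k_F,t/2}(A,r_{k_F}),$$
which is exactly the desired containment.

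There is no real obstacle here: Theorem \ref{thrm:kry1} is responsible for the decomposition at the switch, and Theorem \ref{thrm:kry2} is applicable in both terms because it holds for any vector and any $t\geq 1$; the only mild point to flag is the evenness of $t$, which is needed solely so that $t/2$ is a valid integer splitting parameter for the second Krylov subspace, matching the flexibly enlarged construction in which the subdomains $\tilde{\delta}_i=\delta_{2i-1}\cup\delta_{2i}$ pair up consecutive subdomains after the switch. I would therefore keep the proof to two or three lines, stating the decomposition from Theorem \ref{thrm:kry1}, applying Theorem \ref{thrm:kry2} to each summand with the indicated parameters, and concluding by summing the inclusions.
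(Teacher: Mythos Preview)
Your proposal is correct and follows exactly the same approach as the paper's proof: apply Theorem \ref{thrm:kry1} to decompose $\mathcal{K}_k(A,r_0)$ at the index $k_F$, then apply Theorem \ref{thrm:kry2} to each summand with parameters $t$ and $t/2$ respectively, and sum the resulting inclusions. Your version is in fact slightly more detailed than the paper's two-line proof, as you make the elementwise decomposition $y=y_1+y_2$ explicit and flag why the evenness of $t$ is needed.
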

\begin{proof}
By theorem \ref{thrm:kry1}, $\mathcal{K}_k(A,r_0)= \mathcal{K}_{k_F}(A,r_0)+\mathcal{K}_{k-k_F}(A,r_{k_F})$.\\Moreover, by theorem \ref{thrm:kry2}, $\mathcal{K}_{k_F}(A,r_0) \subseteq \mathscr{K}_{k_F,t}(A,r_0)$ and $\mathcal{K}_{k-k_F}(A,r_k) \subseteq \mathscr{K}_{k-k_F,t/2}(A,r_{k_F})$, which completes the proof.\qquad\qquad
\end{proof}

\subsection{Switch} It remains to decide when to switch to computing half the basis vectors. One option is to fix $k_F$. However, given that it is not know before hand the number of iterations till convergence, a premature or late switch may have a negative effect on the convergence or end up with an unattained memory reduction. Thus, we suggest to switch once  the relative difference of the residual norm is smaller than some predetermined switch tolerance (switchTol), i.e. 
$\dfrac{| \ltwonorm{r_{k+1}}- \ltwonorm{r_{k}}|}{\ltwonorm{r_{0}}} < switchTol$.

This is based on the observation that the norm of the residual in the ECG methods may stagnate at several stages as shown in figures \ref{fig:resAni} for $t=32$ partitions.\vspace{-2mm}
\begin{figure}[H]
  \centering
\includegraphics[width=0.35\textwidth]{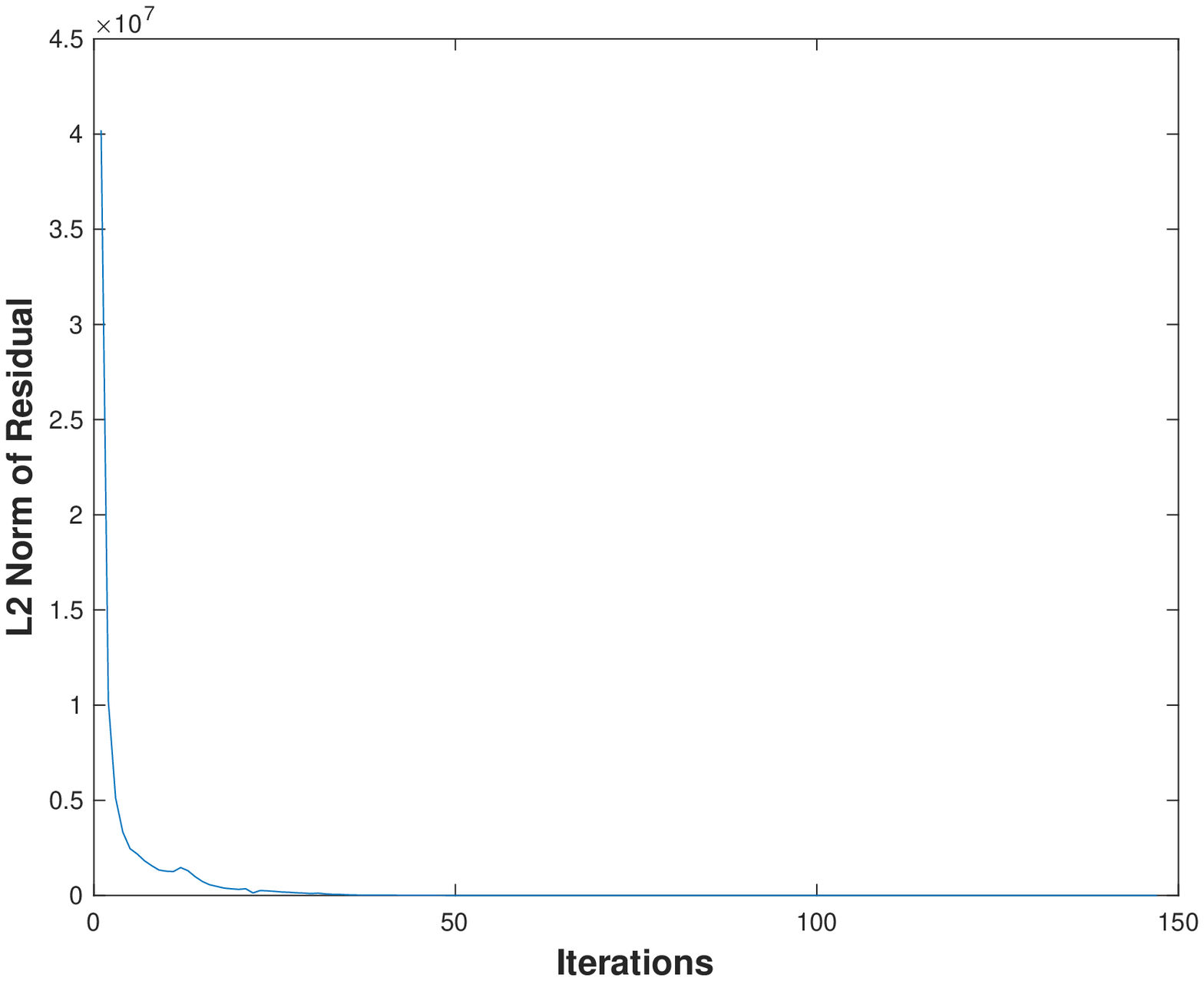} \hspace{-5mm}
\includegraphics[width=0.35\textwidth]{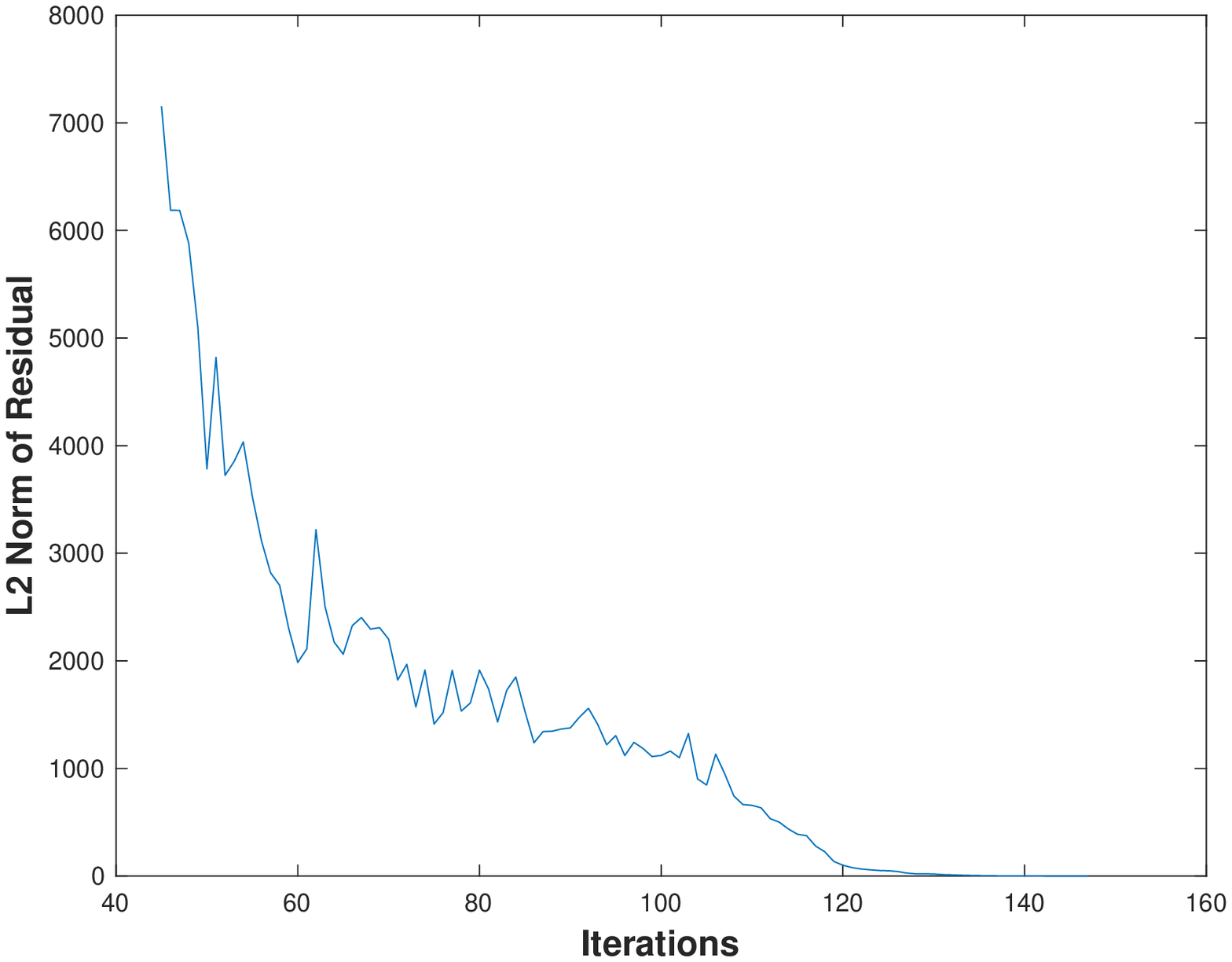} \hspace{-5mm}
\includegraphics[width=0.35\textwidth]{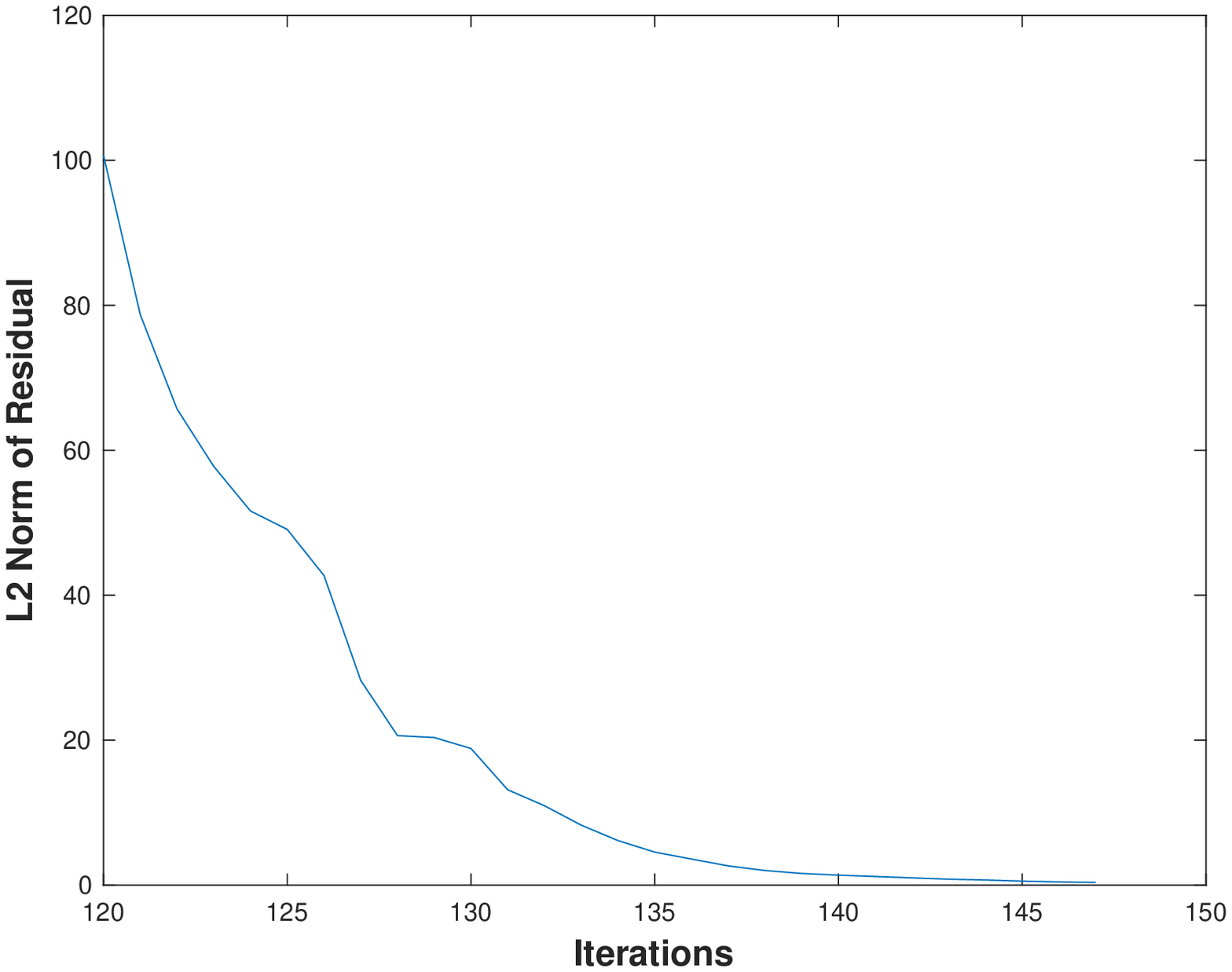}
  \caption{Norm of residual vector for Ani3D matrix using SRE-CG2 for $32$ partitions over all the 156 iterations needed to convergence, zoomed view from iteration 45, and zoomed view from iteration 120.}\label{fig:resAni}\vspace{-5mm}
\end{figure}  
\subsection{Flexibly  SRE-CG2 Algorithm} Applying this general flexibly enlarged CG method to SRE-CG2 leads to Algorithm \ref{alg:flexibleSRE-CG2}.  The main difference with Algorithm \ref{alg:SRE-CG2} is that there is at most one iteration $k$ where after which the dimension of $W_j$ for $j\geq k$ is half that of $W_{k-1}$. This iteration $k = k_F + 1$ is when the relative difference of the residual norm is less than the given switchTol, and thus $W_k$ is set to $ [T^{1/2}(r_{k-1})]$. Specifically, line 6 in Algorithm \ref{alg:SRE-CG2} is replaced by lines 6-11 in Algorithm \ref{alg:flexibleSRE-CG2}. This is similar to restarting with an initial guess set to $x_{k-1}$. Yet, the difference with restarted SRE-CG2 is that in the flexibly enlarged version  at the switch iteration $W_k$ is A-orthonormalized against all the previous vectors. 

\begin{algorithm}[H]\label{alg1}
\caption{Flexibly  SRE-CG2}
{\renewcommand{\arraystretch}{0.8}
\begin{algorithmic}[1]
\Statex{\textbf{Input:} $A$,  $n \times n$ symmetric positive definite matrix; $k_{max}$, maximum allowed iterations}
\Statex{\qquad \quad $b$,  $n \times 1$ right-hand side; $x_0$, initial guess; $\epsilon$, stopping tolerance; switchTol}
\Statex{\textbf{Output:} $x_k$, approximate solution of the system $Ax=b$}\vspace{2mm}
\State$r_0 = b - Ax_0$;\;\; $\rho_0 = ||r_0||_2$ ;\;\; $counter = 0;\;\; tol1 = 1$;\;\;$k = 1$; 
\vspace{1mm}
\While { ( $ {\rho_{k-1}} > \epsilon \;\rho_0 $ \;and\; $k < k_{max}$ )}
 \If {( $ k==1 $ )} \vspace{1mm}
\State A-orthonormalize $W_k = [T^t(r_0)]$,  and let $Q = W_k$ 
 \Else
 \If{( $ tol1 < switchTol $ \; and \; $ counter == 0 $ )}\vspace{1mm}
\State $W_{k}=[T^{t/2}(r_{k-1})]$;
 \State $counter = 1$;
 \Else \vspace{1mm}
\State Let $W_{k}=AW_{k-1}$;
\EndIf \vspace{1mm}
\State $A$-orthonormalize the vectors of $W_{k}$ against $Q$;
\State $A$-orthonormalize the vectors of $W_{k}$ and let $Q=[Q, W_{k}]$;
\EndIf\vspace{1mm}
\State ${\alpha}_{k}=W_{k}^\mathsf{T}r_{k-1}$;
\State $x_{k}=x_{k-1}+W_{k}{\alpha}_{k}$; 
\State $r_{k}=r_{k-1}-AW_{k}{\alpha}_{k}$;
\State $\rho_{k}=\|r_{k}\|_{2}$;\;\;\;$tol1 = ({\rho_{k}-\rho_{k-1}})/{\rho_{0}}$;\;\;\;$k=k+1$;
\EndWhile
\end{algorithmic}
}
\label{alg:flexibleSRE-CG2}
\end{algorithm}

\subsection{Flexibly  MSDO-CG Algorithm}
Applying the idea of flexibly enlarged CG methods to the case of MSDO-CG leads to algorithm \ref{alg:flexMSDO-CG}. Once the relative difference of the residual norm is less than the given switchTol, the number of introduced search directions at iteration $k$ is halved by setting $P_k=[T^{t/2}(r_{k-1})]$ and A-orthonormalizing it against all previous search directions. Thus, lines 6-7 in algorithm \ref{alg:MSDO-CG} are replaced by lines 6-14 in algorithm \ref{alg:flexMSDO-CG}. 
\begin{algorithm}[h!]
\centering
\caption{ Flexibly  MSDO-CG  }
{\renewcommand{\arraystretch}{1.3}
\begin{algorithmic}[1]
\Statex{\textbf{Input:} $A$,  $n \times n$ symmetric positive definite matrix; $k_{max}$, maximum allowed iterations}
\Statex{\qquad \quad $b$,  $n \times 1$ right-hand side; $x_0$, initial guess; $\epsilon$, stopping tolerance; switchTol }
\Statex{\textbf{Output:} $x_k$, approximate solution of the system $Ax=b$}\vspace{2mm}
\State$r_0 = b - Ax_0$,\;\; $\rho_0 = ||r_0||_2$ ,\;\;  $counter = 0;\;\; tol1 = 1$;\;\;$k = 1$; 
\While {( ${\rho}_{k-1} > \epsilon\; {\rho_0} $ and $k < k_{max}$ )}
\If {($k==1$)}
\State A-orthonormalize $P_1 =[T^t(r_0)]$,   let $V_1=AP_1$ and $Q = P_1$ 
\Else 
 \If{( $ tol1 < switchTol $ \; and \; $ counter == 0 $ )}\vspace{1mm}
\State $P_{k}=[T^{t/2}(r_{k-1})]$;
 \State $counter = 1$;
 \Else \vspace{1mm}
\State Let $\beta_k = - V_{k-1}^\mathsf{T} r_{k-1}$
\If {($counter==0$)\;\;}  \;\;Let $P_k = [{T}^t(r_{k-1})]+P_{k-1}diag(\beta_k)$ 
\Else \;\;Let $P_k = [{T}^{t/2}(r_{k-1})]+P_{k-1}diag(\beta_k)$ 
\EndIf
\EndIf \vspace{1mm}
\State A-orthonormalize $P_k$ against $Q$
\State A-orthonormalize $P_k$,  let $V_k=AP_k$ and $Q = [Q \; P_k]$ 
\EndIf
\State ${\alpha}_k = P_k^\mathsf{T} r_{k-1}$ 
\State $x_k = x_{k-1} + P_k{\alpha}_k $ 
\State $r_k = r_{k-1} - V_k{\alpha}_k $  
\State $\rho_k = ||r_{k}||_2$,\;\;\;$tol1 = ({\rho_{k}-\rho_{k-1}})/{\rho_{0}}$;\;\; $k = k+1$  
\EndWhile
\end{algorithmic}}
\label{alg:flexMSDO-CG}
\end{algorithm}
\subsection{Preconditioned versions} Conjugate Gradient solves systems $Ax=b$ with symmetric positive definite (spd) matrices $A$. Thus,  preconditioned matrix $\widehat{A}$ should also be spd. Hence split preconditioning is used, where the preconditioner $M=LL^\mathsf{T}$  and the preconditioned matrix $\widehat{A} = L^{-1}AL^{-\mathsf{T}}$ are spd. Then, the equivalent preconditioned system is 
\begin{equation}\label{eq:prec}  \widehat{A}\widehat{x}=\widehat{b}\end{equation} where $\widehat{x} = L^\mathsf{T}x$ and $\widehat{b} = L^{-1}b$.
   
    The first option discussed in \cite{sophiethesis}, is to first solve $\widehat{A}\widehat{x}=\widehat{b}$ by replacing  $A$ by $\widehat{A}=L^{-1}AL^{-\mathsf{T}}$ and $b$ by $\widehat{b}=L^{-1}b$ in the algorithms. Then,   the solution $x$ is obtained by solving $\widehat{x} = L^\mathsf{T}x$. The first implication is that the $A$-orthonormalization is replaced by $L^{-1}AL^{-\mathsf{T}}$-orthonormalization (Algorithms 19 and 22 of \cite{sophiethesis}). The second is the need to replace every matrix-vector multiplication $Ax$ by $y_3=\widehat{A}\widehat{x} = L^{-1}AL^{-\mathsf{T}}\widehat{x}$ which requires solving $L^\mathsf{T}y_1=\widehat{x}$, computing $y_2 = Ay_1$ and solving $Ly_3=y_2$. Assuming $L$ is lower triangular, then this requires a sequence of backward substitution, matrix-vector multiplication and forward substitution. 
    
    The second option discussed in \cite{sstepECG} is to avoid using the  $L^{-1}AL^{-\mathsf{T}}$-orthonormalization and modifying the recurrence relations of $\alpha_k, x_k, r_k$ in preconditioned algorithms \ref{alg:flexibleSRE-CG2} and \ref{alg:flexMSDO-CG}.

Given system \eqref{eq:prec}, then the corresponding recurrence  relations for both preconditioned flexibly SRE-CG2 and MSDO-CG are:
\begin{eqnarray}
 \widehat{\alpha}_k &=& \widehat{Z}^\mathsf{T}_k \widehat{r}_{k-1} \nonumber \\
 \widehat{x}_k &=& \widehat{x}_{k-1} + \widehat{Z}_k\widehat{\alpha}_k \nonumber\\
 \widehat{r}_k &=& \widehat{r}_{k-1} - \widehat{A}\widehat{Z}_k\widehat{\alpha}_k \nonumber
\end{eqnarray}
where $\widehat{Z}_k$ is $\widehat{A}$-orthonormalized against $\widehat{Z}_i$, i.e.  $\widehat{Z}_k^\mathsf{T}\widehat{A}\widehat{Z}_i=0$ and $\widehat{Z}_i^\mathsf{T}\widehat{A}\widehat{Z}_i=I$. Moreover, in flexibly SRE-CG2 and in flexibly MSDO-CG $\widehat{Z}_k$ is given respectively by \begin{eqnarray} \widehat{Z}_k = \widehat{W}_k&=&\begin{cases}
    [T^t(\widehat{r}_0)] , &k=1\\
    [T^{t/2}(\widehat{r}_{k-1})] , &k=switchIt\\
    \widehat{A}  \widehat{W}_{k-1},& else
\end{cases}\label{eq:zsre}\\
\widehat{Z}_k = \widehat{P}_k&=&\begin{cases}
    [T^t(\widehat{r}_0)] , &k=1\\
    [{T}^t(\widehat{r}_{k-1})]+  \widehat{P}_{k-1}diag(\beta_k),& 2\leq k <switchIt\\
    [T^{t/2}(\widehat{r}_{k-1})] , &k=switchIt\\
     [{T}^{t/2}(\widehat{r}_{k-1})]+  \widehat{P}_{k-1}diag(\beta_k),& else
\end{cases}\label{eq:zmsd}
\end{eqnarray}
and $\beta_k = -(\widehat{A}  \widehat{P}_{k-1})^\mathsf{T} \,\widehat{r}_{k-1}$. 
Noting that  $\widehat{r}_{k} = \widehat{b} - \widehat{A}\widehat{x}_{k} = L^{-1}b - L^{-1}AL^{-\mathsf{T}}L^\mathsf{T}x_{k} =   L^{-1}r_{k}$, 
then the corresponding equations for $x_k$, and $r_k$ are:\vspace{2mm}
 \begin{eqnarray}
 \widehat{\alpha}_k &=& \widehat{Z}^\mathsf{T}_k \widehat{r}_{k-1} = \widehat{Z}^\mathsf{T}_k L^{-1}r_{k} = (L^{-\mathsf{T}}\widehat{Z}_k)^\mathsf{T}r_k \nonumber \\
 \widehat{x}_k &=& L^\mathsf{T}x_k = \widehat{x}_{k-1} + \widehat{Z}_k\widehat{\alpha}_k = L^\mathsf{T}{x}_{k-1} + \widehat{Z}_k\widehat{\alpha}_k \;\;\;
 \implies x_k = {x}_{k-1} + (L^{-\mathsf{T}}\widehat{Z}_k)\widehat{\alpha}_k \nonumber \\
  \widehat{r}_k &=& L^{-1}r_{k} = \widehat{r}_{k-1} - \widehat{A}\widehat{Z}_k\widehat{\alpha}_k = L^{-1}{r}_{k-1} - L^{-1}AL^{-\mathsf{T}}\widehat{Z}_k\widehat{\alpha}_k 
 \implies r_{k} = {r}_{k-1} - A(L^{-\mathsf{T}}\widehat{Z}_k)\widehat{\alpha}_k \nonumber
\end{eqnarray}
Let $Z_k = L^{-\mathsf{T}}\widehat{Z}_k$, then 
\begin{eqnarray}
\widehat{\alpha}_k &=& Z_k^\mathsf{T}r_k,\nonumber \\
 x_k &=& {x}_{k-1} + {Z}_k\widehat{\alpha}_k,\nonumber\\
 r_{k} &=& {r}_{k-1} - A{Z}_k\widehat{\alpha}_k.\nonumber
 \nonumber \vspace{4mm}
 \end{eqnarray}
 
 \noindent As for the $\widehat{A}$-orthonormalization, we require that  $\widehat{Z}_k^\mathsf{T} \widehat{A} \widehat{Z}_i = 0$ for some values of $i\neq k$. But $$\widehat{Z}_k^\mathsf{T} \widehat{A} \widehat{Z}_i = \widehat{Z}_k^\mathsf{T} L^{-1}AL^{-\mathsf{T}} \widehat{Z}_i = (L^{-\mathsf{T}}\widehat{Z}_k)^\mathsf{T}A(L^{-\mathsf{T}} \widehat{Z}_i) = {Z}_k^\mathsf{T}A{Z}_i.$$ 
 Thus, it is sufficient to A-orthonormalize $Z_k = L^{-\mathsf{T}}\widehat{Z}_k$ instead of  $\widehat{A}$-orthonormalizing $\widehat{Z}_k$. 
 \noindent Therefore, the recurrence formulae of $\widehat{\alpha}_k$, $x_k$ and $r_k$ in the preconditioned versions are identical to those of the unpreconditioned versions, and in both we A-orthonormalize a block of vectors $Z_k$. The only difference is in the construction of these block of vectors $Z_k$.
 
 \noindent Since  in \eqref{eq:zsre} and \eqref{eq:zmsd}, $T^j(\widehat{r}_{k}) = T^j(L^{-1}r_{k})$ and $\widehat{A}\widehat{Z}_{k-1} =  L^{-1}A{Z}_{k-1}$  then respectively
 \begin{eqnarray} {Z}_k = L^{-\mathsf{T}}\widehat{Z}_k = L^{-\mathsf{T}}\widehat{W}_k = {W}_k&=&\begin{cases}
   L^{-\mathsf{T}} [T^t(\widehat{r}_0)] , &k=1\\
   L^{-\mathsf{T}} [T^{t/2}(\widehat{r}_{k-1})] , &k=switchIt\\
    L^{-\mathsf{T}}\widehat{A}  \widehat{W}_{k-1} = M^{-1}A{W}_{k-1},& else
\end{cases}\label{eq:zsre2}\\
{Z}_k= L^{-\mathsf{T}}\widehat{Z}_k =  L^{-t}\widehat{P}_k={P}_k&=&\begin{cases}
    L^{-\mathsf{T}}[T^t(\widehat{r}_0)] , &k=1\\
   L^{-\mathsf{T}} [{T}^t(\widehat{r}_{k-1})]+  {P}_{k-1}diag(\beta_k),& 2\leq k <switchIt\\
   L^{-\mathsf{T}} [T^{t/2}(\widehat{r}_{k-1})] , &k=switchIt\\
    L^{-\mathsf{T}} [{T}^{t/2}(\widehat{r}_{k-1})]+  {P}_{k-1}diag(\beta_k),& else
\end{cases}\qquad \qquad\label{eq:zmsd2}
\end{eqnarray}
 and $\beta_k = -(\widehat{A}  \widehat{P}_{k-1})^\mathsf{T}\,\widehat{r}_{k-1} = - (L^{-1}AP_{k-1})^\mathsf{T}\, L^{-1}r_{k-1}= (M^{-1}AP_{k-1})^\mathsf{T}\, r_{k-1}=(AP_{k-1})^\mathsf{T}\,M^{-1}\, r_{k-1}$. 
 Thus, preconditioned flexibly SRE-CG2 algorithm is simply algorithm \ref{alg:flexibleSRE-CG2} where the definitions of $W_k$ in lines 4,7,10 are modified based on \eqref{eq:zsre2}. Preconditioned flexibly MSDO-CG is 
 summarized in algorithm \ref{alg:precflexMSDO-CG}. Note that if the preconditioner is a block diagonal preconditioner,
 with $t$ blocks that correspond to the initial $t$ partitions of  matrix $A$, then $[T^t(L^{-1}r_k)] = L^{-1}[T^t(r_k)]$ and $L^{-\mathsf{T}}[{T}^t(L^{-1}r_k)] = M^{-1}[T^t(r_k)]$. Similarly,  $[T^{t/2}(L^{-1}r_k)] = L^{-1}[T^{t/2}(r_k)]$ and $L^{-\mathsf{T}}[{T}^{t/2}(L^{-1}r_k)] = M^{-1}[T^{t/2}(r_k)]$. In this case, no need for split preconditioning.\vspace{-1mm}
\begin{algorithm}[H]
\centering
\caption{ Split Preconditioned Flexibly  MSDO-CG  }
{\renewcommand{\arraystretch}{1.3}
\begin{algorithmic}[1]
\Statex{\textbf{Input:} $A$,  $n \times n$ spd matrix; $M=LL^\mathsf{T}$, preconditioner; $k_{max}$, maximum iterations}
\Statex{\qquad \quad $b$,  $n \times 1$ right-hand side; $x_0$, initial guess; $\epsilon$, stopping tolerance; switchTol }
\Statex{\textbf{Output:} $x_k$, approximate solution of the system $Ax=b$}\vspace{2mm}
\State$r_0 = b - Ax_0$,\;\; $\rho_0 = ||r_0||_2$ ,\;\;  $counter = 0;\;\; tol1 = 1$;\;\;$k = 1$; 
\While {( ${\rho}_{k-1} > \epsilon\; {\rho_0} $ and $k < k_{max}$ )}
\If {($k==1$)}
\State A-orthonormalize $P_1 = L^{-\mathsf{T}}[T^t(L^{-1}{r}_0)]$,   let $V_1=AP_1$ and $Q = P_1$ 
\Else 
 \If{( $ tol1 < switchTol $ \; and \; $ counter == 0 $ )}\vspace{1mm}
\State $P_{k}=L^{-\mathsf{T}}[T^{t/2}(L^{-1}r_{k-1})]$;
 \State $counter = 1$;
 \Else \vspace{1mm}
\State Let $\beta_k = - V_{k-1}^\mathsf{T} (M^{-1}r_{k-1})$ and $\widehat{r}_{k-1} = L^{-1}r_{k-1}$
\If {($counter==0$)\;\;}  \;\;Let $P_k = L^{-\mathsf{T}}[T^t(\widehat{r}_{k-1})]+P_{k-1}diag(\beta_k)$ 
\Else \;\;Let $P_k = L^{-\mathsf{T}}[T^{t/2}(\widehat{r}_{k-1})]+P_{k-1}diag(\beta_k)$ 
\EndIf
\EndIf \vspace{1mm}
\State A-orthonormalize $P_k$ against $Q$
\State A-orthonormalize $P_k$,  let $V_k=AP_k$ and $Q = [Q \; P_k]$ 
\EndIf
\State ${\alpha}_k = P_k^\mathsf{T} r_{k-1}$ 
\State $x_k = x_{k-1} + P_k{\alpha}_k $ 
\State $r_k = r_{k-1} - V_k{\alpha}_k $  
\State $\rho_k = ||r_{k}||_2$,\;\;\;$tol1 = ({\rho_{k}-\rho_{k-1}})/{\rho_{0}}$;\;\; $k = k+1$ 
\EndWhile
\end{algorithmic}}
\label{alg:precflexMSDO-CG}
\end{algorithm}\vspace{-5mm}
\section{Testing}\label{sec:testing}
The implementation of all the discussed methods depend on the A-orthonormalization procedures that are detailed in \cite{sophiethesis} and \cite{EKS}. In all methods there is a need to A-orthonormalize some block of vectors against another block using CGS2 A-orthonormalization  (Algorithm 18 in \cite{sophiethesis}) and then A-orthonormalize the block itself using Pre-CholQR \cite{A-ortho2} (Algorithm 23 in \cite{sophiethesis}).

We compare the convergence behavior of the different discussed truncated, restarted (section \ref{sec:trunctest}) and flexibly enlarged CG (section \ref{sec:flextest}) versions for solving the system $Ax = b$ using different number of partitions ($t = 2, 4, 8, 16, 32$, and $64$ partitions).  Moreover, we test the preconditioned versions (section \ref{sec:prectest}). 
We first reorder/permute each matrix $A$ using Metis's kway partitioning \cite{metis} for $128$ subdomains. Then, the different larger subdomains for $t=2,4,8,16,32,64$ are defined by merging $128/t$ consecutive ones. The exact solution $x$ is chosen randomly using online MATLAB's rand function (rand('twister', 5489); x = 4*rand(numeq,1);) and the right-hand side is defined as $b = Ax$. The initial iterate is set to $x_0 = 0$, and the stopping criteria tolerance is set to $tol = 10^{-8}$ for all the matrices.

We test the methods using the matrices {\nho}, {\skyo}, {\skyto}, and {\anio}, that arise from different boundary value problems of convection diffusion equations, and generated using FreeFem++ \cite{freefem}. The main characteristics of the test matrices, including the condition number, number of CG iterations to convergence, and maximum allowed iterations $k_{max}$, are summarized in Table \ref{tab:testmatrices}. For a detailed description of the test matrices, refer to \cite{EKS}.

 \begin{table}[h!] \vspace{-4mm}
\centering
{\renewcommand{\arraystretch}{1.4}\footnotesize
\caption{The test matrices}\vspace{-2mm}
\begin{tabular}{|c|c|c|c|c|c|c|c|}
\hline
\multirow{2}{*}{\textbf{Matrix}} & \multirow{2}{*}{\textbf{Size}} & \multirow{2}{*}{\textbf{Nonzeros } } & \multirow{2}{*}{\textbf{2D/3D} }& \multirow{2}{*}{\textbf{Problem}}&\textbf{Condition }&\textbf{CG }&\multirow{2}{*}{$\mathbf{k_{max}}$}\\ 
 &  &   &  &&\textbf{ Number}&\textbf{ Iterations}&\\ 
\hline 
{\nh} & 10000 &49600  & 2D & Boundary value &6.01E+3&259& 500 \\ 
{\skyt} & 8000 & 53600  & 3D &  Skyscraper&1.12E+6&902&1500 \\
{\ani} & 8000& 53600  & 3D & Anisotropic Layers &2.01E+6&4179&5000\\
{\sky} & 10000 & 49600 & 2D & Boundary value&2.91E+7&5980&6000 \\
\hline
\end{tabular}\label{tab:testmatrices}}
\end{table}
\subsection{Truncated and Restarted Enlarged CG Methods}\label{sec:trunctest} We start by comparing the convergence behavior of truncated and restarted SRE-CG2, MSDO-CG and Modified MSDO-CG  for $trunc = 2, 20, 50$ and restart every $j=trunc$ iterations. \vspace{-6mm}

\begin{table}[H]
\setlength{\tabcolsep}{2.5pt}
\renewcommand{\arraystretch}{1.1}
\caption{\label{tab:SRECG-rest} Convergence (iteration  $\bf It$, relative error $\bf RelErr$) of truncated and restarted SRE-CG2 versions, with respect to number of partitions $\bf t$, truncation values $\bf trunc$, and restart values $\bf rstrt$. }\vspace{-3mm}
    \centering
    \begin{tabular}{||c||c||c|c||c|c||c|c||c|c||c|c||c|c||}
    \cline{3-14}
     \multicolumn{1}{c}{} &\multicolumn{1}{c||}{}&\multicolumn{6}{c||}{\textbf{Truncated SRE-CG2}} &\multicolumn{2}{c||}{\textbf{ SRE-CG2}}&\multicolumn{4}{c||}{\textbf{Restarted SRE-CG2} } \\
         \cline{3-14}
    \multicolumn{1}{c}{} &\multicolumn{1}{c||}{}&\multicolumn{2}{c||}{$\bf trunc=2$}&\multicolumn{2}{c||}{$\bf trunc=20$}&\multicolumn{2}{c||}{$\bf trunc=50$}&\multicolumn{2}{c||}{$\bf trunc=k_{max}$}&\multicolumn{2}{c||}{$\bf rstrt=20$}&\multicolumn{2}{c||}{$\bf rstrt=50$} \\
        \cline{2-14}
         \multicolumn{1}{c||}{} &$\bf t$&$\bf It$&$\bf RelErr$&$\bf It$&$\bf RelErr$&$\bf It$&$\bf RelErr$&$\bf It$&$\bf RelErr$&$\bf It$&$\bf RelErr$&$\bf It$&$\bf RelErr$\\
    \hline
      \multirow{6}{*}{\rotatebox[origin=c]{90}{\nh} }  & 2 & 241 & 2.6E-7 & 241 & 2.6E-7 & 241 & 2.6E-7 & 241 & 2.6E-7 & - & 6.1E-3 & - & 4.2E-5 \\ \cline{2-14}
        ~ & 4 & 186 & 1.2E-7 & 186 & 1.2E-7 & 186 & 1.2E-7 & 186 & 1.2E-7 & - & 6.1E-3 & - & 6.1E-6 \\ \cline{2-14}
        ~ & 8 & 147 & 5.3E-8 & 147 & 5.3E-8 & 147 & 5.3E-8 & 147 & 5.3E-8 & - & 5.0E-3 & 347 & 4.9E-7 \\ \cline{2-14}
        ~ & 16 & 111 & 3.0E-8 & 111 & 3.0E-8 & 111 & 3.0E-8 & 111 & 3.0E-8 & - & 2.2E-3 & 247 & 2.7E-7 \\ \cline{2-14} 
        ~ & 32 & 82 & 2.3E-8 & 82 & 2.3E-8 & 82 & 2.3E-8 & 82 & 2.3E-8 & - & 7.9E-4 & 115 & 2.2E-7 \\ \cline{2-14}
        ~ & 64 & 59 & 1.0E-8 & 59 & 1.0E-8 & 59 & 1.0E-8 & 59 & 1.0E-8 & 483 & 1.4E-6 & 62 & 2.4E-8 \\ \hline\hline
     \multirow{6}{*}{\rotatebox[origin=c]{90}{\skyt} }  & 2 & 853 & 1.7E-5 & 839 & 1.7E-5 & 822 & 1.7E-5 & 569 & 1.3E-5 & - & 5.6E-1 & - & 2.5E-1 \\ \cline{2-14}
        ~ & 4 & 759 & 1.2E-5 & 736 & 1.2E-5 & 707 & 1.2E-5 & 381 & 1.3E-5 & - & 5.2E-1 & - & 1.8E-1 \\\cline{2-14}
        ~ & 8 & 607 & 2.8E-6 & 566 & 2.7E-6 & 512 & 3.3E-6 & 212 & 1.5E-5 & - & 2.6E-1 & - & 1.1E-2 \\ \cline{2-14}
        ~ & 16 & 424 & 1.4E-6 & 385 & 1.1E-6 & 310 & 1.6E-6 & 117 & 1.1E-5 & - & 1.6E-1 & - & 1.1E-3 \\ \cline{2-14}
        ~ & 32 & 272 & 9.5E-7 & 214 & 1.0E-6 & 145 & 1.0E-6 & 68 & 1.2E-5 & - & 2.9E-2 & 501 & 1.4E-5 \\ \cline{2-14}
        ~ & 64 & 154 & 5.5E-7 & 100 & 7.4E-7 & 42 & 8.7E-6 & 42 & 8.7E-6 & 1041 & 3.0E-5 & 42 & 8.7E-6 \\ \hline\hline
     \multirow{6}{*}{\rotatebox[origin=c]{90}{\ani} }  & 2 & 3961 & 4.1E-5 & 3968 & 3.9E-5 & 3899 & 4.6E-5 & 875 & 7.2E-5 & - & 5.2E-1 & - & 3.2E-1 \\ \cline{2-14}
        ~ & 4 & 3523 & 4.5E-5 & 3526 & 3.9E-5 & 3516 & 3.9E-5 & 673 & 8.3E-5 & - & 5.2E-1 & - & 3.2E-1 \\ \cline{2-14}
        ~ & 8 & 3127 & 4.6E-5 & 2771 & 5.3E-5 & 2677 & 5.9E-5 & 447 & 1.5E-4 & - & 3.8E-1 & - & 2.4E-1 \\ \cline{2-14}
        ~ & 16 & 2413 & 3.0E-5 & 2006 & 3.1E-5 & 1738 & 3.1E-5 & 253 & 1.8E-4 & - & 3.8E-1 & - & 1.9E-1 \\\cline{2-14}
        ~ & 32 & 1636 & 1.8E-5 & 1214 & 1.7E-5 & 547 & 6.3E-5 & 146 & 2.8E-4 & - & 3.6E-1 & 1995 & 2.4E-4 \\ \cline{2-14}
        ~ & 64 & 896 & 6.4E-6 & 457 & 1.5E-5 & 247 & 6.6E-5 & 91 & 1.7E-4 & - & 8.6E-2 & 922 & 1.1E-4 \\ \hline\hline
          \multirow{6}{*}{\rotatebox[origin=c]{90}{\sky} }  & 2 & 5476 & 3.7E-4 & 5420 & 3.7E-4 & 5357 & 3.6E-4 & 1415 & 7.4E-4 & - & 7.6E-1 & - & 6.8E-1 \\ \cline{2-14}
        ~ & 4 & 4532 & 2.6E-5 & 4420 & 2.7E-5 & 4195 & 4.0E-5 & 754 & 2.2E-4 & - & 7.4E-1 & - & 6.5E-1 \\ \cline{2-14}
        ~ & 8 & 2879 & 1.4E-5 & 2750 & 1.4E-5 & 2530 & 2.3E-5 & 399 & 1.7E-4 & - & 7.4E-1 & - & 6.1E-1 \\ \cline{2-14}
        ~ & 16 & 1852 & 9.2E-6 & 1734 & 7.5E-6 & 1563 & 7.9E-6 & 225 & 1.0E-4 & - & 7.2E-1 & - & 5.3E-1 \\ \cline{2-14}
        ~ & 32 & 984 & 5.0E-6 & 848 & 4.9E-6 & 662 & 5.4E-6 & 124 & 7.1E-5 & - & 6.4E-1 & - & 3.4E-1 \\ \cline{2-14}
        ~ & 64 & 483 & 2.4E-6 & 364 & 2.6E-6 & 158 & 1.1E-5 & 73 & 4.4E-5 & - & 4.9E-1 & 2049 & 4.0E-3 \\ \hline\hline
    \end{tabular}\vspace{-2mm}
\end{table}
\noindent The results are shown 
in Tables \ref{tab:SRECG-rest} and \ref{tab:MSDOCG-rest} for SRE-CG2 and MSDO-CG respectively. We do not show the results of the Modified MSDO-CG method as they are almost identical to that of MSDO-CG, with a difference of at most 10 iterations in a few cases. 

The results of Truncated SRE-CG2 (Table \ref{tab:SRECG-rest}) validate the theoretical discussion about $W_k$ only needing to be A-orthonormalized to the previous 2 blocks. For the ``well"-conditioned matrix \nho , all the truncated versions converge in the same number of iterations. For the other 3 ``ill"-conditioned matrices (in increasing order), we observe the effect of numerical loss of A-orthogonality of the truncated versions, where for larger $trunc$ values  less iterations are required till convergence for the same $t$ partitions. Moreover, in all cases and even for $trunc=2$, Truncated SRE-CG2 converges in less iterations than CG. However, this is not the case for restarted SRE-CG2, where for restarting every $j=20$ iterations the method doesn't converge in $k_{max}$ iterations to the relative residual tolerance of $tol=10^{-8}$ (${\ltwonorm{r_k}}/{\ltwonorm{r_0}}<tol$). In the 2 cases where  restarted SRE-CG2(20) converges, it requires more iterations than CG and a larger relative error than the truncated SRE-CG2 versions. 
\noindent Even restarted SRE-CG2(50) doesn't converge in most cases for $t=2,4,8,16$ in $k_{max}$ iterations or in less iterations than CG. This is due to the stagnation of the norm of residual when restarting. 

For SRE-CG2, it is clear that Truncated SRE-CG2 for $trunc=2$ converges in less iterations than the restarted version, and requires much less memory. However, the choice of the $trunc$ value affects the convergence behavior as shown in Figure \ref{fig:trunc} for the matrices \skyto , \skyo , and \anio \,with 
$trunc$ values from $2,20,50,100,200,300,400,k_{max}$. Moreover, if it is possible to  double the memory, then it is more efficient in terms of convergence to double $t$ than $trunc$. Doubling the $trunc$ value doesn't always lead to a proportional reduction in 
\vspace{-4mm}
\begin{figure}[H] 
 \centering
 \hspace{-8mm} 
 \includegraphics[width=0.55\textwidth]{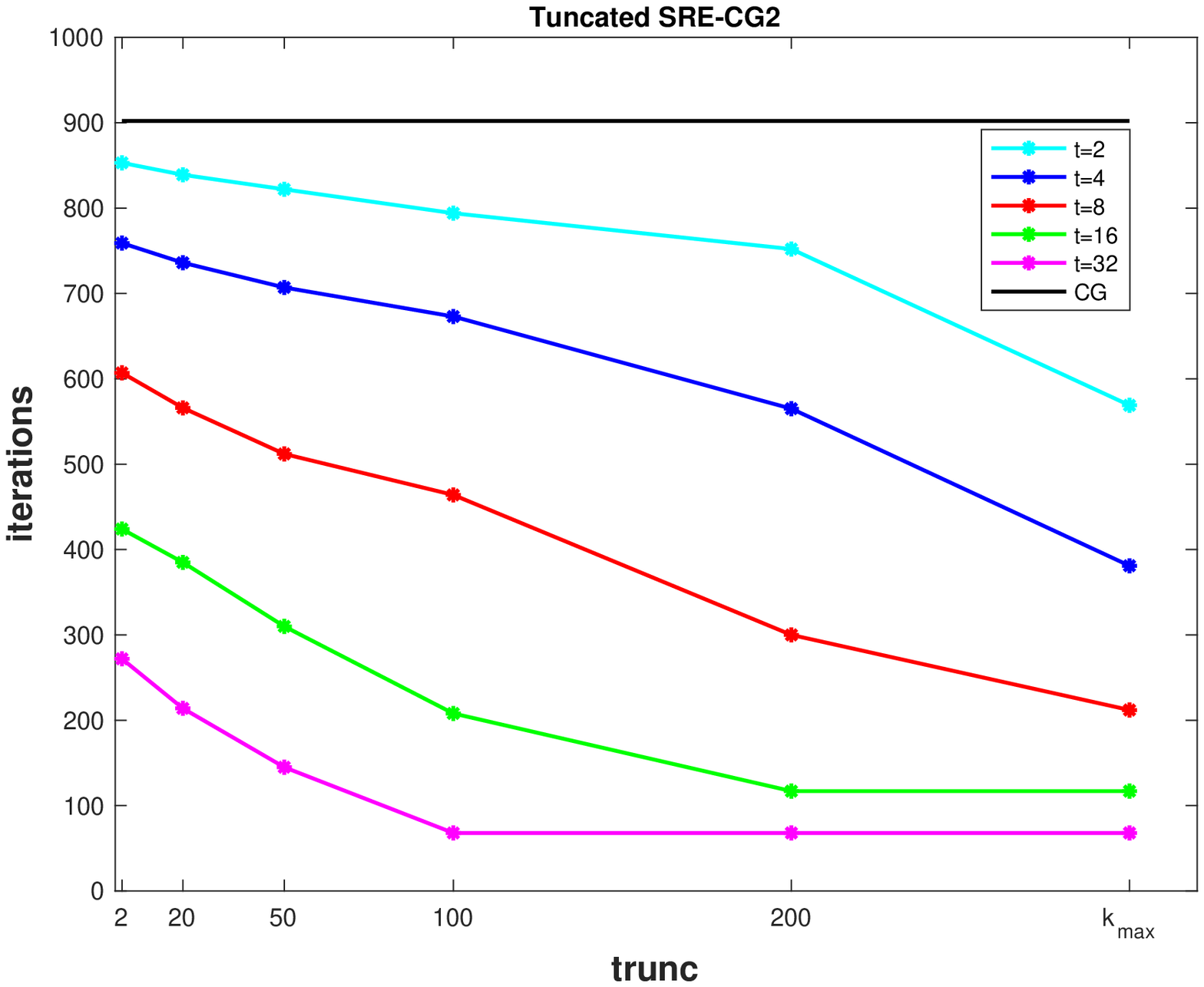} \hspace{-8mm} 
\includegraphics[width=0.55\textwidth]{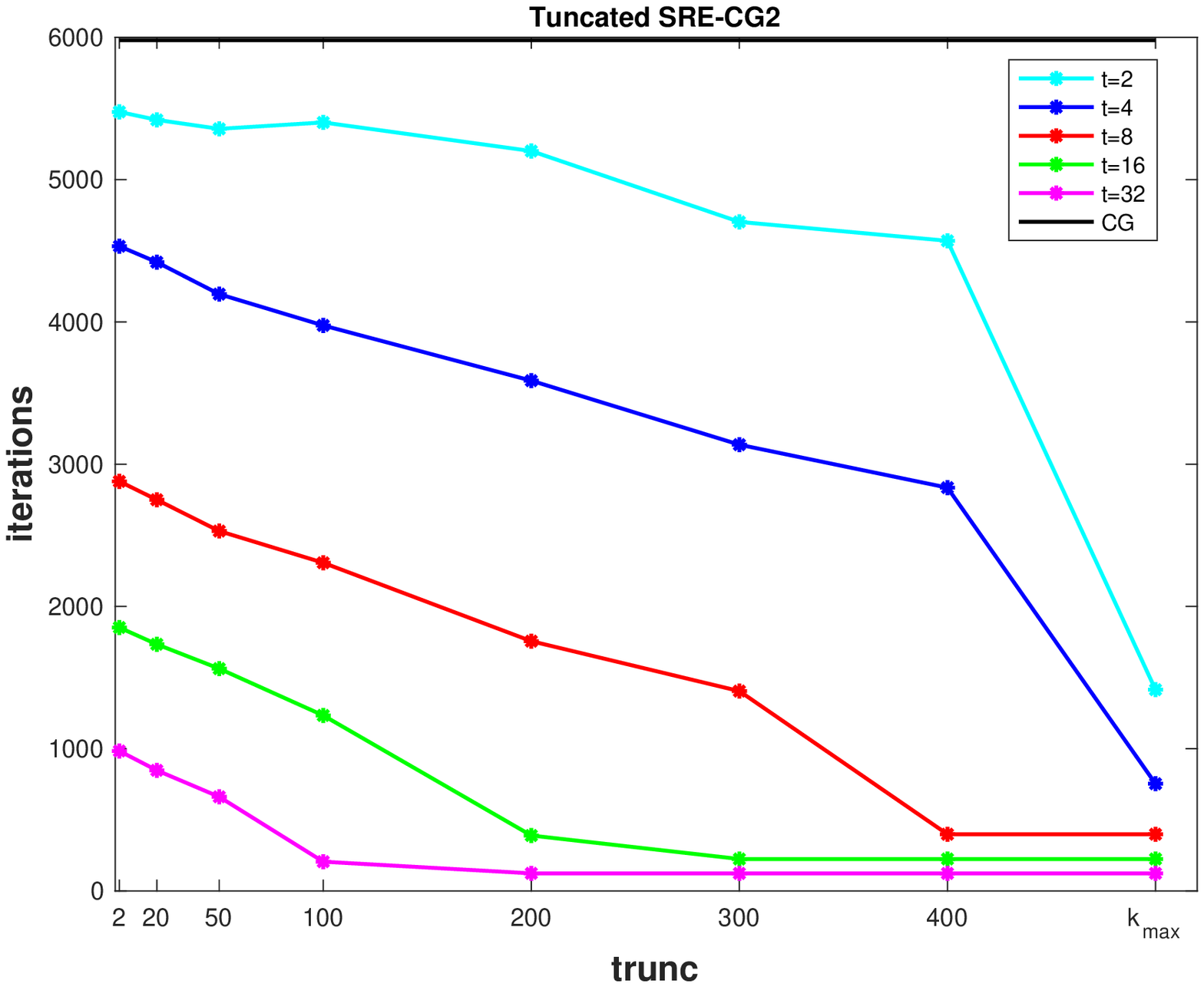}\vspace{-1mm}
\includegraphics[width=0.58\textwidth]{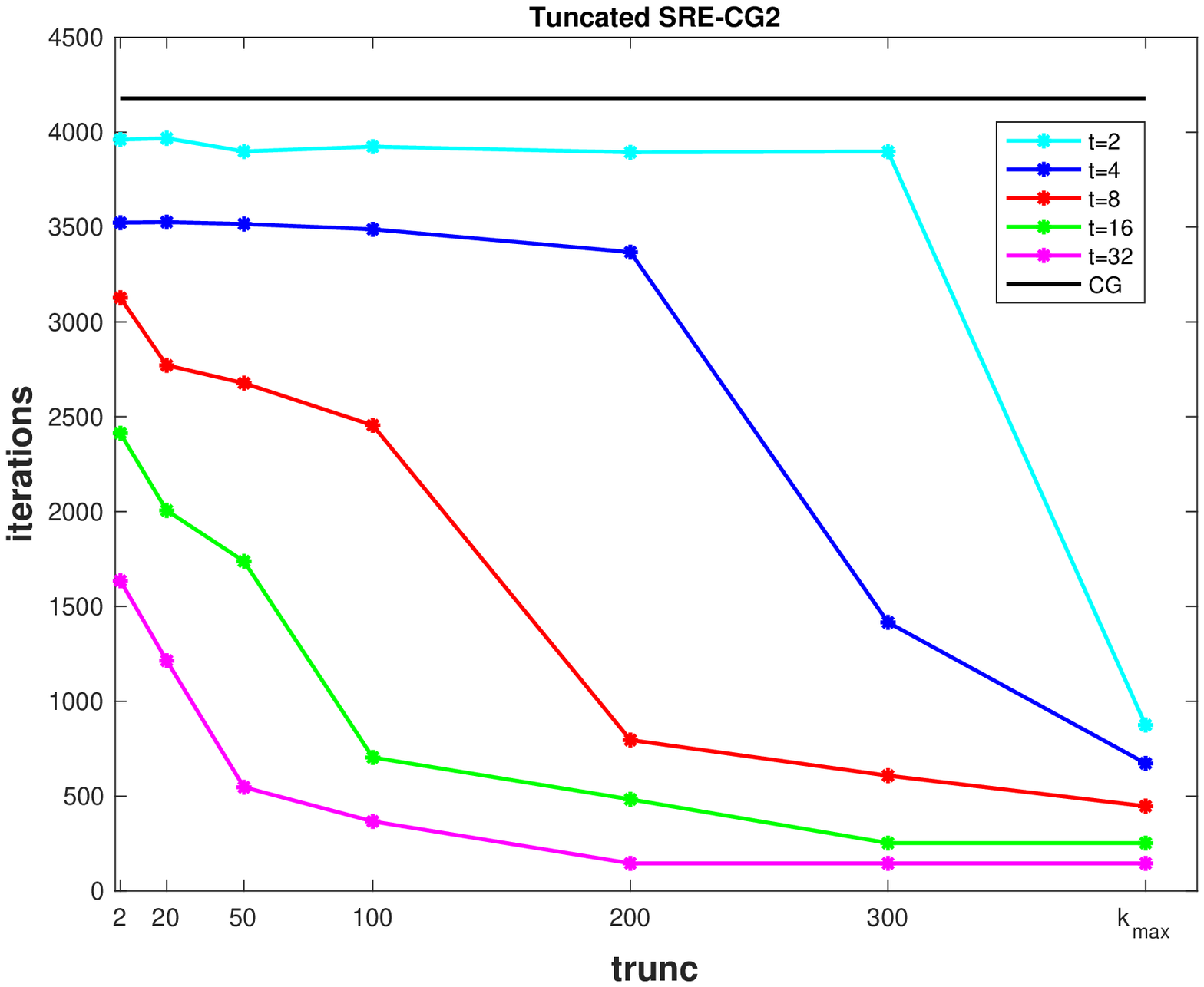}\vspace{-2mm}
   \caption{Convergence of SRE-CG2(trunc) for different $trunc$ and $t$ values for matrices \skyto \,(left),  \skyo \,(right), and \anio \,(bottom)} \label{fig:trunc}\vspace{-10mm}
\end{figure}
\newpage
\noindent iterations. For example, the flat blue curve ($t=4$) of matrix \anio \, shows that doubling $trunc$
from $50$ to $100$ to $200$  barely reduces the iterations from 3516 to 3488 to 3368. We see a steep descent in number of iterations between $trunc=200$ and $300$. This steep descent occurs earlier for larger $t$, between $trunc=100$ and $200$ for $t=8$, between $trunc=50$ and $100$ for $t=16$, and  between $trunc=20$ and $50$ for $t=32$. For $t=32$, SRE-CG(50) converges in 547 iterations , SRE-CG(100) converges in 704 iterations for $t=16$, and SRE-CG(200) converges in 796 iterations for $t=8$, all requiring the storage of 1600 vectors. 

The case of MSDO-CG is different than SRE-CG2 since, as mentioned earlier, there is no guarantee that the method will converge with a truncated A-orthonormalization process. This is clear from the results in Table \ref{tab:MSDOCG-rest}. For the  \nho  \,matrix, all the truncated versions converge, but most requiring more iterations than CG, with less iterations for larger $trunc$ values. However, for the other 3 matrices the truncated MSDO-CG doesn't converge for $trunc=2$. It converges only for \skyo \,and \anio \,matrices with $trunc=20, t=64$, and
\vspace{-3mm}
\begin{table}[H]
\setlength{\tabcolsep}{2.5pt}
\renewcommand{\arraystretch}{1.1}
\caption{\label{tab:MSDOCG-rest} Comparison of the convergence (iteration to convergence $\bf It$, relative error $\bf RelErr$) of truncated and restarted MSDO-CG versions, with respect to number of partitions $\bf t$, truncation values $\bf trunc$, and restart values $\bf rstrt$. }
    \centering
    \begin{tabular}{||c||c||c|c||c|c||c|c||c|c||c|c||c|c||}
    \cline{3-14}
     \multicolumn{1}{c}{} &\multicolumn{1}{c||}{}&\multicolumn{6}{c||}{\textbf{Truncated MSDO-CG}} &\multicolumn{2}{c||}{\textbf{ MSDOCG}}&\multicolumn{4}{c||}{\textbf{Restarted MSDO-CG} } \\
         \cline{3-14}
    \multicolumn{1}{c}{} &\multicolumn{1}{c||}{}&\multicolumn{2}{c||}{$\bf trunc=2$}&\multicolumn{2}{c||}{$\bf trunc=20$}&\multicolumn{2}{c||}{$\bf trunc=50$}&\multicolumn{2}{c||}{$\bf trunc=k_{max}$}&\multicolumn{2}{c||}{$\bf rstrt=20$}&\multicolumn{2}{c||}{$\bf rstrt=50$} \\
        \cline{2-14}
    \multicolumn{1}{c||}{} &$\bf t$&$\bf It$&$\bf RelErr$&$\bf It$&$\bf RelErr$&$\bf It$&$\bf RelErr$&$\bf It$&$\bf RelErr$&$\bf It$&$\bf RelErr$&$\bf It$&$\bf RelErr$\\
    \hline
     \multirow{6}{*}{\rotatebox[origin=c]{90}{\nh} }   &2& 301 & 6.4E-7 & 312 & 6.0E-7 & 314 & 6.4E-7 & 256 & 1.8E-7 & - & 6.7E-3 & - & 2.2E-6 \\ \cline{2-14}
       &4& 338 & 9.8E-7 & 328 & 1.1E-6 & 298 & 4.0E-7 & 206 & 1.3E-7 & - & 6.4E-3 & - & 4.9E-6 \\ \cline{2-14}
       &8& 380 & 9.9E-7 & 324 & 4.3E-7 & 240 & 3.6E-7 & 169 & 9.0E-8 & - & 5.1E-3 & - & 1.1E-6 \\ \cline{2-14}
       &16& 395 & 9.8E-7 & 275 & 7.7E-7 & 186 & 1.6E-7 & 139 & 3.7E-8 & - & 3.3E-3 & 401 & 5.5E-7 \\ \cline{2-14}
        &32&373 & 1.3E-6 & 210 & 1.1E-6 & 131 & 2.5E-7 & 107 & 2.0E-8 & - & 1.5E-3 & 297 & 2.0E-7 \\ \cline{2-14}
      &64&  367 & 1.2E-6 & 155 & 5.8E-7 & 82 & 1.5E-7 & 77 & 1.0E-8 & - & 1.2E-3 & 177 & 1.2E-6 \\ \hline\hline
     \multirow{6}{*}{\rotatebox[origin=c]{90}{\skyt}} &2&   - & 6.5E-3 & - & 5.7E-2 & - & 2.9E-3 & 646 & 1.6E-5 & - & 5.2E-1 & - & 1.5E-1 \\ \cline{2-14}
     &4&   - & 3.1E-2 & - & 8.0E-2 & - & 3.4E-3 & 426 & 2.2E-5 & - & 4.8E-1 & - & 1.8E-1 \\\cline{2-14}
     &8&   - & 8.9E-2 & - & 2.7E-2 & - & 7.2E-4 & 231 & 2.3E-5 & - & 2.1E-1 & - & 8.2E-2 \\ \cline{2-14}
     &16&   - & 8.1E-2 & - & 1.8E-2 & - & 1.2E-4 & 133 & 1.4E-5 & - & 1.3E-1 & - & 2.6E-2 \\ \cline{2-14}
     &32&   - & 7.6E-2 & - & 8.5E-4 & 755 & 3.8E-6 & 79 & 8.7E-6 & - & 9.1E-2 & 916 & 1.7E-5 \\ \cline{2-14}
      &64&  - & 2.6E-2 & 1294 & 6.9E-6 & 50 & 8.5E-6 & 50 & 8.5E-6 & - & 4.1E-3 & 50 & 8.5E-6 \\ \hline\hline
               \multirow{6}{*}{\rotatebox[origin=c]{90}{\ani}} 
          & 2 & - & 5.4E-3 & - & 2.5E-4 & - & 2.9E-4 & 933 & 7.0E-5 & - & 5.2E-1 & - & 3.7E-1 \\ \cline{2-14}
        ~ & 4 & - & 2.9E-3 & - & 2.6E-4 & - & 2.4E-4 & 734 & 9.4E-5 & - & 5.0E-1 & - & 3.7E-1 \\ \cline{2-14}
        ~ & 8 & - & 1.2E-3 & - & 3.4E-4 & - & 2.4E-4 & 471 & 2.1E-4 & - & 3.5E-1 & - & 2.7E-1 \\ \cline{2-14}
        ~ & 16 & - & 4.2E-2 & - & 5.6E-4 & - & 2.1E-4 & 328 & 7.6E-5 & - & 3.5E-1 & - & 2.3E-1 \\ \cline{2-14}
        ~ & 32 & - & 4.0E-2 & - & 3.1E-4 & 4347 & 3.3E-5 & 202 & 8.6E-5 & - & 3.0E-1 & - & 1.1E-1 \\ \cline{2-14}
        ~ & 64 & - & 6.3E-2 & 4716 & 3.1E-5 & 1838 & 3.9E-5 & 116 & 1.3E-4 & - &1.8E-1 & - & 6.8E-3 \\ \hline\hline

        \multirow{6}{*}{\rotatebox[origin=c]{90}{\sky}} &2&   - & 3.2E-2 & - & 3.9E-2 & - & 1.0E-1 & 1514 & 8.1E-4 & - & 7.5E-1 & - & 6.7E-1 \\\cline{2-14}
    &4&    - & 4.2E-2 & - & 2.5E-1 & - & 3.2E-2 & 845 & 3.3E-4 & - & 7.1E-1 & - & 6.2E-1 \\ \cline{2-14}
     &8&   - & 7.6E-2 & - & 3.3E-1 & - & 1.1E-2 & 513 & 2.3E-4 & - & 6.8E-1 & - & 5.8E-1 \\ \cline{2-14}
      &16&  - & 2.1E-1 & - & 3.4E-1 & - & 5.9E-3 & 291 & 2.2E-4 & - & 6.7E-1 & - & 5.3E-1 \\ \cline{2-14}
    &32&    - & 1.3E-1 & - & 2.6E-2 & - & 1.7E-3 & 162 & 1.1E-4 & - & 6.4E-1 & - & 3.9E-1 \\ \cline{2-14}
     &64&   - & 3.1E-1 & - & 3.3E-3 & 2653 & 4.6E-5 & 120 & 2.4E-5 & - & 5.3E-1 & - & 5.0E-2 \\ \hline\hline
    \end{tabular}
\end{table}
\noindent $trunc=50, t=32,64$ , and  for \skyto \, matrix with $trunc=50, t=64$. Note that when restarting every $j=20$ iterations, the method doesn't converge in $k_{max}$ iterations for all the matrices. As for restarting every $j=50$ iterations, the method converges in less iterations than CG for \nho \,and \skyto \, matrices and $t=64$.

It is clear that truncating the A-orthonormalization process is better than restarting in the cases where the methods converge, as the truncated versions converge in less iterations. 
Moreover, the effect of the condition number of the matrices on the convergence of the different methods is clear, as the larger the condition number is, more iterations are required for convergence.  

We also tested the restarted versions where the restart is done once the relative difference of residual norm is less than some $restartTol$ for $restartTol=10^{-3}, 10^{-5}, 10^{-7}$. The results are shown in tables \ref{tab:SRECG-rest2} and \ref{tab:MSDOCG-rest2}, where in addition to the number of iterations ($\bf It$) to convergence and the corresponding relative error ($\bf RelEr$), the number of restarts ($\bf \#r$) and the largest number of previous block vectors ($\bf mem$) that are stored throughout the different restarted cycles are shown.
 \vspace{-5mm}
\begin{table}[H]
\setlength{\tabcolsep}{2.5pt}
\renewcommand{\arraystretch}{1.1}
\caption{\label{tab:SRECG-rest2} Convergence (iteration to convergence $\bf It$, relative error $\bf RelEr$) of  restarted SRE-CG2 versions, with respect to number of partitions $\bf t$, restart tolerances  $\bf restartTol$. Moreover, the number of performed restarts $\bf \#r$ and the largest restart cycle memory requirement $\bf mem$ are shown. }\vspace{-1mm}
    \centering
    \begin{tabular}{||c||c||c|c|c||c|c|c|c||c|c|c|c||c|c|c|c||}
    \cline{3-17}
     \multicolumn{1}{c}{} &\multicolumn{1}{c||}{}&\multicolumn{3}{c||}{\multirow{2}{*}{\textbf{ SRE-CG2}}}&\multicolumn{12}{c||}{\textbf{Restarted SRE-CG2} } \\
         \cline{6-17}
    \multicolumn{1}{c}{} &\multicolumn{1}{c||}{}&\multicolumn{1}{c}{} &\multicolumn{1}{c}{} & &\multicolumn{4}{c||}{$\bf restartTol=10^{-3}$}&\multicolumn{4}{c||}{$\bf restartTol=10^{-5}$}&\multicolumn{4}{c||}{$\bf restartTol=10^{-7}$} \\
        \cline{2-17}
    \multicolumn{1}{c||}{} &$\bf t$&$\bf It$&$\bf mem$&$\bf RelErr$&$\bf It$&$\bf\#r$&$\bf mem$&$\bf RelEr$&$\bf It$&$\bf\#r$&$\bf mem$&$\bf RelEr$&$\bf It$&$\bf\#r$&$\bf mem$&$\bf RelEr$\\
    \hline
\multirow{6}{*}{\rotatebox[origin=c]{90}{\nh}}  
& 2 & 241 & 482 & 2.6E-7 & 387 & 10 & 174 & 9.1E-7 & 328 & 3 & 312 & 7.0E-7 & 247 & 1 & 286 & 3.3E-7 \\ \cline{2-17}
        ~ & 4 & 186 & 744 & 1.2E-7 & 470 & 14 & 244 & 1.6E-6 & 262 & 2 & 580 & 2.0E-7 & 195 & 1 & 616 & 1.9E-7 \\ \cline{2-17}
        ~ & 8 & 147 & 1176 & 5.3E-8 & 217 & 6 & 664 & 1.3E-6 & 178 & 2 & 992 & 8.4E-8 & 182 & 1 & 1152 & 5.3E-8 \\ \cline{2-17}
        ~ & 16 & 111 & 1776 & 3.0E-8 & 187 & 4 & 1184 & 1.7E-7 & 141 & 1 & 1184 & 8.5E-8 & 113 & 1 & 1568 & 7.1E-8 \\ \cline{2-17}
        ~ & 32 & 82 & 2624 & 2.3E-8 & 116 & 5 & 1792 & 1.7E-6 & 110 & 2 & 2240 & 3.0E-8 & 84 & 1 & 2368 & 3.4E-8 \\ \cline{2-17}
        ~ & 64 & 59 & 3776 & 1.0E-8 & 88 & 4 & 2752 & 2.0E-7 & 69 & 1 & 2880 & 2.9E-7 & 60 & 1 & 3520 & 1.3E-8 \\ \hline\hline
                \multirow{6}{*}{\rotatebox[origin=c]{90}{\skyt}}   & 2 & 569 & 1138 & 1.3E-5 & 1337 & 10 & 566 & 2.6E-5 & 793 & 2 & 838 & 8.7E-6 & 640 & 1 & 1064 & 1.7E-5 \\ \cline{2-17}
        ~ & 4 & 381 & 1524 & 1.3E-5 & 1238 & 11 & 816 & 4.2E-5 & 538 & 2 & 1388 & 9.1E-6 & 584 & 1 & 1232 & 9.4E-6 \\ \cline{2-17}
        ~ & 8 & 212 & 1696 & 1.5E-5 & 502 & 7 & 1584 & 4.2E-6 & 335 & 2 & 1664 & 6.8E-6 & 356 & 1 & 1624 & 8.4E-6 \\ \cline{2-17}
        ~ & 16 & 117 & 1872 & 1.1E-5 & 275 & 3 & 1712 & 1.5E-5 & 199 & 1 & 1824 & 1.2E-5 & 202 & 1 & 1792 & 4.0E-6 \\ \cline{2-17}
        ~ & 32 & 68 & 2176 & 1.2E-5 & 183 & 4 & 1888 & 2.0E-5 & 119 & 1 & 2016 & 1.4E-5 & 114 & 1 & 2080 & 2.5E-6 \\ \cline{2-17}
        ~ & 64 & 42 & 2688 & 8.7E-6 & 80 & 2 & 2240 & 1.5E-5 & 69 & 1 & 2240 & 1.1E-5 & 67 & 1 & 2560 & 1.8E-6 \\ \hline\hline
        \multirow{6}{*}{\rotatebox[origin=c]{90}{\ani}}   & 2 & 875 & 1750 & 7.2E-5 & 4412 & 19 & 976 & 1.0E-4 & 1677 & 3 & 1290 & 1.1E-4 & 1238 & 1 & 1646 & 7.3E-5 \\ \cline{2-17}
        ~ & 4 & 673 & 2692 & 8.3E-5 & 2080 & 11 & 1340 & 1.9E-4 & 1011 & 2 & 1824 & 1.2E-4 & 876 & 1 & 2124 & 7.7E-5 \\ \cline{2-17}
        ~ & 8 & 447 & 3576 & 1.5E-4 & 1549 & 12 & 1808 & 1.3E-4 & 924 & 3 & 2480 & 1.2E-4 & 579 & 1 & 3480 & 1.4E-4 \\ \cline{2-17}
        ~ & 16 & 253 & 4048 & 1.8E-4 & 1181 & 15 & 2528 & 1.4E-4 & 517 & 2 & 3712 & 1.3E-4 & 362 & 1 & 3904 & 2.5E-4 \\ \cline{2-17}
        ~ & 32 & 146 & 4672 & 2.8E-4 & 559 & 8 & 4000 & 1.1E-4 & 261 & 2 & 4416 & 1.5E-4 & 252 & 1 & 4064 & 2.4E-4 \\ \cline{2-17}
        ~ & 64 & 91 & 5824 & 1.7E-4 & 256 & 6 & 4736 & 3.0E-4 & 124 & 1 & 5696 & 2.4E-4 & 143 & 1 & 5440 & 2.1E-4 \\ \hline\hline
\multirow{6}{*}{\rotatebox[origin=c]{90}{\sky}}   & 2 & 1415 & 2830 & 7.4E-4 & 6000 & 51 & 1460 & 4.9E-2 & 2903 & 3 & 2814 & 1.1E-4 & 1476 & 1 & 2830 & 1.1E-3 \\ \cline{2-17}
        ~ & 4 & 754 & 3016 & 2.2E-4 & 6000 & 41 & 2824 & 1.5E-3 & 1309 & 2 & 2984 & 3.7E-4 & 904 & 1 & 3024 & 3.4E-4 \\ \cline{2-17}
        ~ & 8 & 399 & 3192 & 1.7E-4 & 1653 & 13 & 3096 & 8.9E-5 & 799 & 2 & 3176 & 1.2E-4 & 498 & 1 & 3224 & 1.7E-4 \\ \cline{2-17}
        ~ & 16 & 225 & 3600 & 1.0E-4 & 1466 & 10 & 3264 & 1.6E-4 & 489 & 2 & 3472 & 1.3E-4 & 404 & 1 & 3440 & 1.1E-4 \\ \cline{2-17}
        ~ & 32 & 124 & 3968 & 7.1E-5 & 429 & 6 & 4000 & 6.9E-5 & 268 & 2 & 3872 & 3.3E-5 & 210 & 1 & 3584 & 4.9E-4 \\ \cline{2-17}
        ~ & 64 & 73 & 4672 & 4.4E-5 & 211 & 4 & 4672 & 3.0E-5 & 125 & 1 & 4800 & 2.6E-5 & 118 & 1 & 4224 & 1.8E-4 \\ \hline\hline
    \end{tabular}
\end{table}
As shown in Tables \ref{tab:SRECG-rest2} and \ref{tab:MSDOCG-rest2}, using $restartTol=10^{-3}$ leads to early and several restarts that might lead to stagnation in a few cases. Moreover, the total number of iterations is increased with respect to the non-restarted method, and in some cases for $t=2,4$ they require more iterations than CG. 
However, the memory reduction with respect to SRE-CG2 and MSDO-CG is between $20\% \mbox{ and } 60\%$. On the other hand, using $restartTol=10^{-7}$ leads to a very late and mostly one restart, thus defeating the purpose of restart (reducing memory requirements). This is clear as the memory requirements for $t=16,32,64$ of the restarted versions is almost identical to the initial versions.
The moderate choice is setting $restartTol=10^{-5}$ which leads to a few restarts without stagnation with a memory reduction of  $5\% \mbox{ to } 50\%$.
 For some cases, even though for $restartTol=10^{-5}$ the  methods restart earlier and more than the case of $restartTol=10^{-7}$, yet the needed memory is more. For example, \skyo \,with $t=16,32,64$,  \skyto \,with $t=4,8,16$, and \anio  \,with $t=32$.

Comparing the restarted versions with a fixed restarted cycle (Tables \ref{tab:SRECG-rest} and \ref{tab:MSDOCG-rest}) to the versions with the restart tolerance (Tables \ref{tab:SRECG-rest2} and \ref{tab:MSDOCG-rest2}) it is clear that restarting once the  relative difference of the residuals is less then some tolerance is more adaptive than fixing the\vspace{-3mm}

\begin{table}[H]
\setlength{\tabcolsep}{2.5pt}
\renewcommand{\arraystretch}{1.1}
\caption{\label{tab:MSDOCG-rest2} Comparison of the convergence (iteration to convergence $\bf It$, relative error $\bf RelEr$) of  restarted MSDO-CG versions, with respect to number of partitions $\bf t$, restart tolerances  $\bf restartTol$. The number of performed restarts $\bf \#r$ and largest restart cycle memory requirement $\bf mem$ are shown. \vspace{-3mm}}
  \hspace{-5mm}  \begin{tabular}{||c||c||c|c|c||c|c|c|c||c|c|c|c||c|c|c|c||}
    \cline{3-17}
     \multicolumn{1}{c}{} &\multicolumn{1}{c||}{}&\multicolumn{3}{c||}{\multirow{2}{*}{\textbf{ MSDOCG}}}&\multicolumn{12}{c||}{\textbf{Restarted MSDO-CG} } \\
         \cline{6-17}
    \multicolumn{1}{c}{} & \multicolumn{1}{c||}{} &\multicolumn{1}{c}{}&\multicolumn{1}{c}{} & &\multicolumn{4}{c||}{$\bf restartTol=10^{-3}$}&\multicolumn{4}{c||}{$\bf restartTol=10^{-5}$}&\multicolumn{4}{c||}{$\bf restartTol=10^{-7}$} \\
        \cline{2-17}
    \multicolumn{1}{c||}{} &$\bf t$&$\bf It$&$\bf mem$&$\bf RelErr$&$\bf It$&$\bf\#r$&$\bf mem$&$\bf RelEr$&$\bf It$&$\bf\#r$&$\bf mem$&$\bf RelEr$&$\bf It$&$\bf\#r$&$\bf mem$&$\bf RelEr$\\
    \hline
     \multirow{6}{*}{\rotatebox[origin=c]{90}{\nh} } 
 & 2 & 256 & 512 & 1.8E-7 & 458 & 14 & 162 & 1.7E-6 & 301 & 3 & 278 & 1.3E-6 & 304 & 2 & 284 & 6.5E-7 \\ \cline{2-17}
        ~ & 4 & 206 & 824 & 1.3E-7 & 352 & 10 & 372 & 2.7E-6 & 266 & 2 & 592 & 5.9E-7 & 251 & 1 & 648 & 8.8E-7 \\ \cline{2-17}
        ~ & 8 & 169 & 1352 & 9.0E-8 & 302 & 6 & 720 & 6.6E-7 & 220 & 2 & 1048 & 3.8E-7 & 200 & 1 & 1152 & 5.4E-7 \\ \cline{2-17}
        ~ & 16 & 139 & 2224 & 3.7E-8 & 252 & 8 & 1232 & 1.2E-6 & 180 & 2 & 1696 & 2.7E-7 & 173 & 1 & 1952 & 5.2E-7 \\ \cline{2-17}
        ~ & 32 & 107 & 3424 & 2.0E-8 & 222 & 7 & 2048 & 5.0E-7 & 150 & 2 & 2784 & 1.3E-7 & 134 & 1 & 3136 & 3.7E-7 \\ \cline{2-17}
        ~ & 64 & 77 & 4928 & 1.0E-8 & 163 & 6 & 3328 & 1.9E-7 & 94 & 2 & 4096 & 7.0E-8 & 79 & 1 & 4672 & 1.4E-7 \\ \hline\hline
        \multirow{6}{*}{\rotatebox[origin=c]{90}{\skyt} }  & 2 & 646 & 1292 & 1.6E-5 & - & 11 & 668 & 9.3E-4 & 943 & 2 & 1000 & 2.4E-5 & 900 & 1 & 958 & 4.5E-5 \\ \cline{2-17}
        ~ & 4 & 426 & 1704 & 2.2E-5 & - & 14 & 1296 & 6.7E-5 & 730 & 2 & 1636 & 1.3E-5 & 739 & 1 & 1520 & 2.7E-5 \\ \cline{2-17}
        ~ & 8 & 231 & 1848 & 2.3E-5 & 953 & 9 & 1480 & 1.1E-5 & 346 & 2 & 1824 & 8.3E-6 & 398 & 1 & 1712 & 1.9E-5 \\ \cline{2-17}
        ~ & 16 & 133 & 2128 & 1.4E-5 & 339 & 4 & 2000 & 5.2E-6 & 239 & 2 & 2096 & 9.4E-6 & 227 & 1 & 2096 & 7.3E-6 \\ \cline{2-17}
        ~ & 32 & 79 & 2528 & 8.7E-6 & 153 & 2 & 2240 & 6.5E-6 & 140 & 1 & 2272 & 4.3E-6 & 131 & 1 & 2464 & 5.6E-6 \\ \cline{2-17}
        ~ & 64 & 50 & 3200 & 8.5E-6 & 95 & 2 & 2816 & 7.4E-6 & 85 & 1 & 2752 & 8.3E-6 & 82 & 1 & 3072 & 5.7E-6 \\ \hline\hline
        \multirow{6}{*}{\rotatebox[origin=c]{90}{\ani} }  & 2 & 933 & 1866 & 7.0E-5 & - & 48 & 954 & 1.7E-4 & 1810 & 3 & 1278 & 7.7E-5 & 1571 & 2 & 1572 & 7.9E-5 \\ \cline{2-17}
        ~ & 4 & 734 & 2936 & 9.4E-5 & 4653 & 44 & 1080 & 1.5E-4 & 1493 & 4 & 1708 & 9.6E-5 & 944 & 1 & 2780 & 8.5E-5 \\ \cline{2-17}
        ~ & 8 & 471 & 3768 & 2.1E-4 & 3793 & 36 & 2560 & 8.7E-5 & 1092 & 3 & 3200 & 1.3E-4 & 709 & 1 & 3680 & 1.7E-4 \\ \cline{2-17}
        ~ & 16 & 328 & 5248 & 7.6E-5 & 1417 & 17 & 3648 & 6.3E-5 & 707 & 3 & 4656 & 7.0E-5 & 477 & 1 & 5008 & 1.2E-4 \\ \cline{2-17}
        ~ & 32 & 202 & 6464 & 8.6E-5 & 1059 & 13 & 4672 & 4.6E-5 & 281 & 1 & 6304 & 1.1E-4 & 302 & 1 & 6112 & 1.2E-4 \\ \cline{2-17}
        ~ & 64 & 116 & 7424 & 1.3E-4 & 399 & 5 & 6336 & 3.9E-5 & 227 & 2 & 6720 & 6.9E-5 & 202 & 1 & 7104 & 9.0E-5 \\ \hline\hline
\multirow{6}{*}{\rotatebox[origin=c]{90}{\sky} }  & 2 & 1514 & 3028 & 8.1E-4 & - & 59 & 2360 & 7.1E-4 & 2200 & 2 & 2962 & 9.1E-4 & 1839 & 1 & 3022 & 7.2E-4 \\ \cline{2-17}
        ~ & 4 & 845 & 3380 & 3.3E-4 & 4835 & 41 & 2796 & 5.0E-4 & 1374 & 2 & 3324 & 3.7E-4 & 973 & 1 & 3368 & 3.3E-4 \\ \cline{2-17}
        ~ & 8 & 513 & 4104 & 2.3E-4 & 1940 & 14 & 3472 & 3.0E-4 & 917 & 2 & 4016 & 1.7E-4 & 692 & 1 & 4088 & 2.6E-4 \\ \cline{2-17}
        ~ & 16 & 291 & 4656 & 2.2E-4 & 1201 & 14 & 4384 & 2.1E-4 & 569 & 2 & 4480 & 4.4E-4 & 539 & 1 & 4432 & 1.5E-4 \\ \cline{2-17}
        ~ & 32 & 162 & 5184 & 1.1E-4 & 443 & 7 & 4896 & 1.1E-4 & 327 & 2 & 5024 & 5.5E-5 & 289 & 1 & 4800 & 8.6E-4 \\ \cline{2-17}
        ~ & 64 & 120 & 7680 & 2.4E-5 & 316 & 5 & 7232 & 1.6E-5 & 185 & 1 & 7488 & 2.8E-5 & 217 & 1 & 7424 & 2.6E-5 \\ \hline\hline
    \end{tabular}
\end{table}
 \noindent restart iteration, which leads to convergence within the $k_{max}$ iterations. But this comes at the expense of requiring more memory storage than the fixed restart iteration. 

To summarize, for SRE-CG2 theoretically it is possible to truncate the A-orthonormalization process for some preset $trunc$ value depending on the available memory. However, if this  $trunc$ value is too small relative to the required iterations to convergence by SRE-CG2, then the truncated version will require much more iterations,  but still less than CG as shown in Figure \ref{fig:trunc}. Moreover, it is preferable to double $t$ rather than $trunc$. For MSDO-CG, truncation doesn't necessarily lead to convergence, as it is not guaranteed theoretically. As for restarting after a fixed number of iterations, it leads to stagnation and the method may not converge in $k_{max}$ iterations, even though it requires the same storage as the truncated versions. Restarting  after the relative difference of the residuals is less than the restart tolerance, is more flexible as it leads to  convergence within the $k_{max}$ iterations and in less iterations than CG, but more iterations than the corresponding method. However, its memory requirement is not known beforehand.

\subsection{Flexibly  Enlarged CG Methods}\label{sec:flextest} We test the convergence of flexibly SRE-CG2 and MSDO-CG for different $t$ values and $switchTol=10^{-3}, 10^{-5},10^{-7}$  (Tables \ref{tab:SRECG-flex}, \ref{tab:MSDOCG-flex})   .\vspace{-4mm}
\begin{table}[H]
\setlength{\tabcolsep}{2.5pt}
\renewcommand{\arraystretch}{1.1}
\caption{\label{tab:SRECG-flex} Convergence (iteration to convergence $\bf It$, switch iteration $\bf sIt$, relative error $\bf RelEr$) of  flexibly SRE-CG2 with respect to number of partitions $\bf t$, switch tolerances  $\bf switchTol$.\vspace{-3mm} }
    \centering
    \begin{tabular}{||c||c||c|c||c|c|c||c|c|c||c|c|c||}
    \cline{3-13}
     \multicolumn{1}{c}{} &\multicolumn{1}{c||}{}&\multicolumn{2}{c||}{\multirow{2}{*}{\textbf{ SRE-CG2}}}&\multicolumn{9}{c||}{\textbf{Flexibly  SRE-CG2} } \\
         \cline{5-13}
    \multicolumn{1}{c}{} &\multicolumn{1}{c||}{}&\multicolumn{1}{c}{} & &\multicolumn{3}{c||}{$\bf switchTol=10^{-3}$}&\multicolumn{3}{c||}{$\bf switchTol=10^{-5}$}&\multicolumn{3}{c||}{$\bf switchTol=10^{-7}$} \\
        \cline{2-13}
    \multicolumn{1}{c||}{} &$\bf t$&$\bf It$&$\bf RelErr$&$\bf It$&$\bf sIt$&$\bf RelEr$&$\bf It$&$\bf sIt$&$\bf RelEr$&$\bf It$&$\bf sIt$&$\bf RelEr$\\
    \hline
     \multirow{6}{*}{\rotatebox[origin=c]{90}{\nh} }   & 2 & 241 & 2.6E-7 & 282 & 14 & 6.6E-7 & 293 & 42 & 1.3E-7 & 253 & 144 & 2.1E-7 \\ \cline{2-13}
        ~ & 4 & 186 & 1.2E-7 & 251 & 14 & 1.8E-7 & 207 & 107 & 1.2E-7 & 190 & 155 & 1.5E-7 \\ \cline{2-13}
        ~ & 8 & 147 & 5.3E-8 & 192 & 14 & 1.0E-7 & 177 & 35 & 1.0E-7 & 175 & 38 & 9.2E-8 \\ \cline{2-13}
        ~ & 16 & 111 & 3.0E-8 & 152 & 14 & 5.1E-8 & 114 & 75 & 4.6E-8 & 111 & 99 & 3.2E-8 \\ \cline{2-13}
        ~ & 32 & 82 & 2.3E-8 & 108 & 14 & 3.3E-8 & 96 & 32 & 2.8E-8 & 83 & 75 & 1.6E-8 \\ \cline{2-13}
        ~ & 64 & 59 & 1.0E-8 & 77 & 14 & 1.7E-8 & 60 & 46 & 1.6E-8 & 59 & 56 & 1.2E-8 \\ \hline \hline
         \multirow{6}{*}{\rotatebox[origin=c]{90}{\skyt} }  & 2 & 569 & 1.3E-5 & 710 & 19 & 3.0E-5 & 671 & 85 & 3.5E-5 & 679 & 108 & 2.1E-5 \\ \cline{2-13}
        ~ & 4 & 381 & 1.3E-5 & 577 & 18 & 1.3E-5 & 535 & 84 & 2.7E-5 & 399 & 309 & 2.3E-5 \\ \cline{2-13}
        ~ & 8 & 212 & 1.5E-5 & 368 & 15 & 1.2E-5 & 312 & 78 & 1.9E-5 & 253 & 153 & 1.4E-5 \\ \cline{2-13}
        ~ & 16 & 117 & 1.1E-5 & 200 & 13 & 2.1E-5 & 134 & 85 & 1.4E-5 & 117 & 113 & 1.2E-5 \\ \cline{2-13}
        ~ & 32 & 68 & 1.2E-5 & 107 & 14 & 1.7E-5 & 69 & 56 & 1.4E-5 & 68 & 66 & 1.3E-5 \\ \cline{2-13}
        ~ & 64 & 42 & 8.7E-6 & 60 & 12 & 7.4E-6 & 43 & 34 & 6.8E-6 & 42 & 41 & 9.0E-6 \\  \hline\hline
         \multirow{6}{*}{\rotatebox[origin=c]{90}{\ani} }  & 2 & 875 & 7.2E-5 & 1233 & 19 & 7.4E-5 & 1206 & 53 & 7.1E-5 & 884 & 415 & 7.9E-5 \\ \cline{2-13}
        ~ & 4 & 673 & 8.3E-5 & 861 & 19 & 7.0E-5 & 754 & 142 & 6.9E-5 & 689 & 345 & 8.3E-5 \\ \cline{2-13}
        ~ & 8 & 447 & 1.5E-4 & 683 & 11 & 6.2E-5 & 597 & 115 & 6.5E-5 & 584 & 144 & 9.8E-5 \\ \cline{2-13}
        ~ & 16 & 253 & 1.8E-4 & 440 & 11 & 1.5E-4 & 392 & 63 & 2.0E-4 & 353 & 118 & 2.2E-4 \\ \cline{2-13}
        ~ & 32 & 146 & 2.8E-4 & 243 & 11 & 1.9E-4 & 216 & 39 & 1.9E-4 & 150 & 125 & 2.2E-4 \\ \cline{2-13}
        ~ & 64 & 91 & 1.7E-4 & 138 & 11 & 2.2E-4 & 116 & 35 & 2.4E-4 & 100 & 58 & 2.0E-4 \\ \hline\hline
           \multirow{6}{*}{\rotatebox[origin=c]{90}{\sky} }  & 2 & 1415 & 7.4E-4 & 2396 & 15 & 7.4E-4 & 2416 & 22 & 6.7E-4 & 2415 & 61 & 5.9E-4 \\ \cline{2-13}
        ~ & 4 & 754 & 2.2E-4 & 1411 & 15 & 8.1E-4 & 1315 & 106 & 1.2E-3 & 1290 & 148 & 3.2E-4 \\ \cline{2-13}
        ~ & 8 & 399 & 1.7E-4 & 733 & 15 & 2.9E-4 & 710 & 44 & 4.6E-4 & 664 & 95 & 2.4E-4 \\ \cline{2-13}
        ~ & 16 & 225 & 1.0E-4 & 386 & 11 & 1.6E-4 & 326 & 79 & 1.6E-4 & 227 & 189 & 1.5E-4 \\ \cline{2-13}
        ~ & 32 & 124 & 7.1E-5 & 208 & 13 & 1.0E-4 & 169 & 55 & 1.2E-4 & 125 & 113 & 8.9E-5 \\ \cline{2-13}
        ~ & 64 & 73 & 4.4E-5 & 115 & 10 & 6.5E-5 & 84 & 50 & 5.0E-5 & 74 & 67 & 4.0E-5 \\ \hline \hline
    \end{tabular}\vspace{-5mm}
\end{table}
\newpage
In Tables \ref{tab:SRECG-flex} and \ref{tab:MSDOCG-flex} we compare the convergence behavior (number of iterations to convergence $\bf It$ and the relative error $\bf RelEr$) of Flexibly  SRE-CG2 with that of SRE-CG2. We also show the switch iteration, $\bf switchIt$, at which the $t$ value is halved.

Flexibly  SRE-CG2 and Flexibly  MSDO-CG have the same convergence behavior as SRE-CG2 and MSDO-CG, respectively, where for larger $t$ the methods converge in less iterations with similar order of relative errors, that increase with the matrices' condition number. Flexibly  SRE-CG2 with a given $t$ value is a mixture of SRE-CG2 with $t$ and $t/2$ vectors per iteration (similarly for MSDO-CG). Thus, it is expected that the convergence of Flexibly  SRE-CG2 to be within the range of SRE-CG2 with $t$ and $t/2$ vectors per iteration. To which end of the spectrum the Flexibly  SRE-CG2's convergence lies, depends on the values of $t$ and $switchTol$ and the matrix's condition number.

\noindent Setting $switchTol$ to $10^{-3}$ leads to an early switch in the first 10 to 20 iterations, thus converging in number of iterations closer to SRE-CG2 with $t/2$ (sometimes slightly more). In this case, Flexibly  SRE-CG2 will require more iterations and less memory than SRE-CG2 with $t$  as shown in Table \ref{tab:SRECG-mem}, however, more memory than SRE-CG2 with $t/2$. Thus it is more efficient to use SRE-CG2 with $t/2$ rather than Flexibly  SRE-CG2 with $t$ and $switchTol=10^{-3}$.\vspace{-7mm}

\begin{table}[H]
\setlength{\tabcolsep}{2.5pt}
\renewcommand{\arraystretch}{1.1}
\caption{\label{tab:MSDOCG-flex} Convergence (iteration to convergence $\bf It$, switch iteration $\bf sIt$, relative error $\bf RelEr$) of  flexibly MSDO-CG with respect to number of partitions $\bf t$, restart tolerances  $\bf restartTol$. }
    \centering
    \begin{tabular}{||c||c||c|c||c|c|c||c|c|c||c|c|c||}
    \cline{3-13}
     \multicolumn{1}{c}{} &\multicolumn{1}{c||}{}&\multicolumn{2}{c||}{\multirow{2}{*}{\textbf{ MSDOCG}}}&\multicolumn{9}{c||}{\textbf{Flexibly  MSDO-CG} } \\
         \cline{5-13}
    \multicolumn{1}{c}{} &\multicolumn{1}{c||}{}&\multicolumn{1}{c}{} & &\multicolumn{3}{c||}{$\bf switchTol=10^{-3}$}&\multicolumn{3}{c||}{$\bf switchTol=10^{-5}$}&\multicolumn{3}{c||}{$\bf switchTol=10^{-7}$} \\
        \cline{2-13}
    \multicolumn{1}{c||}{} &$\bf t$&$\bf It$&$\bf RelErr$&$\bf It$&$\bf sIt$&$\bf RelEr$&$\bf It$&$\bf sIt$&$\bf RelEr$&$\bf It$&$\bf sIt$&$\bf RelEr$\\
    \hline
     \multirow{6}{*}{\rotatebox[origin=c]{90}{\nh} } & 2 & 256 & 1.8E-7 & 267 & 14 & 4.1E-7 & 257 & 58 & 4.5E-7 & 257 & 143 & 2.9E-7 \\ \cline{2-13}
        ~ & 4 & 206 & 1.3E-7 & 254 & 14 & 1.7E-7 & 229 & 58 & 2.0E-7 & 208 & 163 & 1.3E-7 \\ \cline{2-13}
        ~ & 8 & 169 & 9.0E-8 & 204 & 14 & 1.2E-7 & 183 & 69 & 1.3E-7 & 170 & 145 & 9.8E-8 \\ \cline{2-13}
        ~ & 16 & 139 & 3.7E-8 & 165 & 14 & 9.9E-8 & 152 & 56 & 9.9E-8 & 139 & 123 & 4.3E-8 \\ \cline{2-13}
        ~ & 32 & 107 & 2.0E-8 & 135 & 14 & 4.1E-8 & 117 & 57 & 1.9E-8 & 107 & 99 & 2.4E-8 \\ \cline{2-13}
        ~ & 64 & 77 & 1.0E-8 & 100 & 16 & 2.7E-8 & 93 & 27 & 2.7E-8 & 77 & 74 & 1.1E-8 \\ \hline\hline
        \multirow{6}{*}{\rotatebox[origin=c]{90}{\skyt} } & 2 & 646 & 1.6E-5 & 751 & 18 & 2.2E-5 & 744 & 35 & 2.9E-5 & 660 & 479 & 3.1E-5 \\ \cline{2-13}
        ~ & 4 & 426 & 2.2E-5 & 650 & 23 & 1.4E-5 & 627 & 59 & 1.7E-5 & 441 & 380 & 2.3E-5 \\ \cline{2-13}
        ~ & 8 & 231 & 2.3E-5 & 412 & 17 & 1.9E-5 & 383 & 50 & 1.7E-5 & 238 & 214 & 1.7E-5 \\ \cline{2-13}
        ~ & 16 & 133 & 1.4E-5 & 219 & 16 & 1.5E-5 & 215 & 23 & 1.1E-5 & 133 & 131 & 1.5E-5 \\ \cline{2-13}
        ~ & 32 & 79 & 8.7E-6 & 120 & 14 & 1.9E-5 & 79 & 69 & 1.3E-5 & 79 & 77 & 9.2E-6 \\ \cline{2-13}
        ~ & 64 & 50 & 8.5E-6 & 68 & 11 & 1.5E-5 & 50 & 42 & 1.6E-5 & 50 & 48 & 9.3E-6 \\ \hline \hline
        \multirow{6}{*}{\rotatebox[origin=c]{90}{\ani} } & 2 & 933 & 7.0E-5 & 1232 & 19 & 8.0E-5 & 1172 & 107 & 6.9E-5 & 1036 & 271 & 6.6E-5 \\ \cline{2-13}
        ~ & 4 & 734 & 9.4E-5 & 917 & 19 & 6.8E-5 & 829 & 119 & 6.6E-5 & 750 & 249 & 7.4E-5 \\ \cline{2-13}
        ~ & 8 & 471 & 2.1E-4 & 733 & 20 & 8.5E-5 & 727 & 39 & 8.8E-5 & 605 & 249 & 1.7E-4 \\ \cline{2-13}
        ~ & 16 & 328 & 7.6E-5 & 452 & 21 & 2.1E-4 & 439 & 34 & 2.0E-4 & 342 & 164 & 1.8E-4 \\ \cline{2-13}
        ~ & 32 & 202 & 8.6E-5 & 320 & 10 & 6.8E-5 & 254 & 84 & 6.6E-5 & 231 & 111 & 9.9E-5 \\ \cline{2-13}
        ~ & 64 & 116 & 1.3E-4 & 186 & 17 & 1.1E-4 & 180 & 23 & 1.0E-4 & 123 & 91 & 2.0E-4 \\ \hline\hline
           \multirow{6}{*}{\rotatebox[origin=c]{90}{\sky} } & 2 & 1514 & 8.1E-4 & 2400 & 15 & 6.4E-4 & 2359 & 66 & 6.3E-4 & 2218 & 328 & 9.0E-4 \\ \cline{2-13}
        ~ & 4 & 845 & 3.3E-4 & 1497 & 15 & 5.4E-4 & 1469 & 46 & 8.3E-4 & 1392 & 131 & 7.8E-4 \\ \cline{2-13}
        ~ & 8 & 513 & 2.3E-4 & 830 & 15 & 2.8E-4 & 782 & 64 & 2.4E-4 & 671 & 181 & 4.3E-4 \\ \cline{2-13}
        ~ & 16 & 291 & 2.2E-4 & 503 & 11 & 2.1E-4 & 465 & 49 & 1.1E-4 & 291 & 262 & 2.2E-4 \\ \cline{2-13}
        ~ & 32 & 162 & 1.1E-4 & 277 & 13 & 2.9E-4 & 236 & 52 & 1.4E-4 & 163 & 150 & 1.4E-4 \\ \cline{2-13}
        ~ & 64 & 120 & 2.4E-5 & 150 & 13 & 9.9E-5 & 121 & 68 & 4.4E-5 & 120 & 101 & 2.5E-5 \\ \hline  \hline
    \end{tabular}\vspace{-3mm}
\end{table}
 On the other hand, setting $switchTol$ to $10^{-7}$ leads to a relatively late switch for $t=16,32,64$, implying that the number of iterations is closer to that of SRE-CG2 with the same $t$. Moreover, in this case the memory reduction is minimal for matrices \nho, \skyto \,and \skyo \, as shown in Table \ref{tab:SRECG-mem}, but is between $5\%$ and $20\%$ for  \anio. 

 As for $switchTol = 10^{-5}$, it is the moderate choice that balances between the number of iterations to convergence and the required memory, as the switch occurs earlier than the case of $10^{-7}$ and after that of $10^{-3}$. Thus, number of iterations to convergence for $switchTol = 10^{-5}$ is less than that of $switchTol = 10^{-3}$ and greater than or equal to that of $switchTol = 10^{-7}$. The converse is observed for the memory requirements, since the switch iteration increases as switchTol decreases.  For $t=16,32,64$, the reduction of memory of flexibly SRE-CG2 with respect to SRE-CG2 with the same $t$ is  between $10\%$ and $20\%$ (between $10\%$ and $20\%$ for \nho ,  $10\%$ and $12\%$ for \skyto, $10\%$ and $17\%$ for \anio, and $10\%$ for \skyo). Note that for the matrices \skyo \,and \anio \,with larger condition number requiring more iterations to convergence, leads to a larger number of vectors to be stored.
 
 A similar convergence behavior is observed for MSDO-CG in Table \ref{tab:MSDOCG-flex}, yet it requires more iterations than SRE-CG2, thus requiring more memory storage (Table \ref{tab:MSDOCG-mem}) \vspace{-9mm}
\hspace{-25mm} \begin{table}[H]
\setlength{\tabcolsep}{1pt}
\renewcommand{\arraystretch}{1.1}
\caption{\label{tab:SRECG-mem} Memory requirements of SRE-CG2, restarted SRE-CG2, flexibly SRE-CG2 and restarted flexibly SRE-CG2 with respect to number of partitions $\bf t$, and restart tolerances  $\bf restartTol$. }
\hspace{-13mm}    \begin{tabular}{||c||c||c|c||c|c||c|c||c|c||c|c||c|c||c|c||c|c||c|c||c|c||}
    \cline{3-22}
     \multicolumn{1}{c}{} &\multicolumn{1}{c||}{}&\multicolumn{2}{c||}{\multirow{2}{*}{\textbf{ SRE-CG2}}}&\multicolumn{6}{c||}{\textbf{Restarted, restartTol =} } &\multicolumn{6}{c||}{\textbf{ Flexibly Enlarged, switchTol =} }&\multicolumn{6}{c||}{\textbf{Restarted flexibly, restartTol =} } \\
         \cline{5-22}
    \multicolumn{1}{c}{} &\multicolumn{1}{c||}{}&\multicolumn{1}{c}{} & &\multicolumn{2}{c||}{$\bf 10^{-3}$}&\multicolumn{2}{c||}{$\bf 10^{-5}$}&\multicolumn{2}{c||}{$\bf 10^{-7}$} &\multicolumn{2}{c||}{$\bf 10^{-3}$}&\multicolumn{2}{c||}{$\bf 10^{-5}$}&\multicolumn{2}{c||}{$\bf 10^{-7}$}&\multicolumn{2}{c||}{$\bf 10^{-3}$}&\multicolumn{2}{c||}{$\bf 10^{-5}$}&\multicolumn{2}{c||}{$\bf 10^{-7}$} \\
        \cline{2-22}
    \multicolumn{1}{c||}{} &$\bf t$&$\bf It$&$\bf mem$&$\bf It$&$\bf mem$&$\bf It$&$\bf mem$&$\bf It$&$\bf mem$&$\bf It$&$\bf mem$&$\bf It$&$\bf mem$&$\bf It$&$\bf mem$&$\bf It$&$\bf mem$&$\bf It$&$\bf mem$&$\bf It$&$\bf mem$\\
    \cline{1-22}
     \multirow{6}{*}{\rotatebox[origin=c]{90}{\nh} }  & 2 & 241 & 482 & 240 & 452 & 282 & 480 & 247 & 288 & 282 & 296 & 293 & 335 & 253 & 397 & 264 & 250 & 287 & 245 & 271 & 288 \\ \cline{2-22}
        ~ & 4 & 186 & 744 & 198 & 736 & 260 & 612 & 195 & 620 & 251 & 530 & 207 & 628 & 190 & 690 & 242 & 456 & 286 & 428 & 196 & 620 \\ \cline{2-22}
        ~ & 8 & 147 & 1176 & 158 & 1152 & 175 & 1120 & 182 & 1152 & 192 & 824 & 177 & 848 & 175 & 852 & 198 & 736 & 219 & 736 & 215 & 708 \\ \cline{2-22}
        ~ & 16 & 111 & 1776 & 122 & 1728 & 141 & 1200 & 113 & 1584 & 152 & 1328 & 114 & 1512 & 111 & 1680 & 159 & 1160 & 156 & 1200 & 113 & 1584 \\ \cline{2-22}
        ~ & 32 & 82 & 2624 & 94 & 2560 & 108 & 2432 & 84 & 2400 & 108 & 1952 & 96 & 2048 & 83 & 2528 & 123 & 1744 & 134 & 1632 & 84 & 2400 \\ \cline{2-22}
        ~ & 64 & 59 & 3776 & 72 & 3712 & 69 & 2944 & 60 & 3584 & 77 & 2912 & 60 & 3392 & 59 & 3680 & 94 & 2560 & 70 & 2944 & 60 & 3584 \\ \hline \hline
         \multirow{6}{*}{\rotatebox[origin=c]{90}{\skyt} } & 2 & 569 & 1138 & 575 & 1112 & 623 & 1076 & 640 & 1064 & 710 & 729 & 671 & 756 & 679 & 787 & 718 & 699 & 755 & 670 & 763 & 655 \\ \cline{2-22}
        ~ & 4 & 381 & 1524 & 391 & 1492 & 459 & 1500 & 584 & 1236 & 577 & 1190 & 535 & 1238 & 399 & 1416 & 570 & 1104 & 634 & 1100 & 632 & 1236 \\ \cline{2-22}
        ~ & 8 & 212 & 1696 & 224 & 1672 & 287 & 1672 & 356 & 1624 & 368 & 1532 & 312 & 1560 & 253 & 1624 & 389 & 1496 & 451 & 1492 & 510 & 1428 \\ \cline{2-22}
        ~ & 16 & 117 & 1872 & 130 & 1872 & 199 & 1824 & 202 & 1808 & 200 & 1704 & 134 & 1752 & 117 & 1840 & 224 & 1688 & 293 & 1664 & 207 & 1808 \\ \cline{2-22}
        ~ & 32 & 68 & 2176 & 82 & 2176 & 119 & 2016 & 114 & 2112 & 107 & 1936 & 69 & 2000 & 68 & 2144 & 131 & 1872 & 167 & 1792 & 133 & 2112 \\ \cline{2-22}
        ~ & 64 & 42 & 2688 & 54 & 2688 & 69 & 2240 & 67 & 2624 & 60 & 2304 & 43 & 2464 & 42 & 2656 & 80 & 2176 & 94 & 2176 & 81 & 2624 \\ \hline \hline
         \multirow{6}{*}{\rotatebox[origin=c]{90}{\ani} } & 2 & 875 & 1750 & 893 & 1748 & 920 & 1734 & 1238 & 1646 & 1233 & 1252 & 1206 & 1259 & 884 & 1299 & 1266 & 1247 & 1302 & 1249 & 1615 & 1200 \\ \cline{2-22}
        ~ & 4 & 673 & 2692 & 690 & 2684 & 804 & 2648 & 876 & 2124 & 861 & 1760 & 754 & 1792 & 689 & 2068 & 893 & 1748 & 1000 & 1716 & 1057 & 1424 \\ \cline{2-22}
        ~ & 8 & 447 & 3576 & 457 & 3568 & 557 & 3536 & 579 & 3480 & 683 & 2776 & 597 & 2848 & 584 & 2912 & 684 & 2692 & 770 & 2620 & 776 & 2528 \\ \cline{2-22}
        ~ & 16 & 253 & 4048 & 263 & 4032 & 313 & 4000 & 362 & 3904 & 440 & 3608 & 392 & 3640 & 353 & 3768 & 457 & 3568 & 506 & 3544 & 552 & 3472 \\ \cline{2-22}
        ~ & 32 & 146 & 4672 & 157 & 4672 & 183 & 4608 & 252 & 4064 & 243 & 4064 & 216 & 4080 & 150 & 4400 & 263 & 4032 & 289 & 4000 & 355 & 4000 \\ \cline{2-22}
        ~ & 64 & 91 & 5824 & 102 & 5824 & 124 & 5696 & 143 & 5440 & 138 & 4768 & 116 & 4832 & 100 & 5056 & 156 & 4640 & 178 & 4576 & 199 & 4512 \\ \hline\hline
 \multirow{6}{*}{\rotatebox[origin=c]{90}{\sky}}  & 2 & 1415 & 2830 & 1428 & 2826 & 1436 & 2828 & 1476 & 2830 & 2396 & 2411 & 2416 & 2438 & 2415 & 2476 & 2406 & 2391 & 2413 & 2391 & 2660 & 2599 \\ \cline{2-22}
        ~ & 4 & 754 & 3016 & 769 & 3016 & 860 & 3016 & 904 & 3024 & 1411 & 2852 & 1315 & 2842 & 1290 & 2876 & 1431 & 2832 & 1521 & 2830 & 1563 & 2830 \\ \cline{2-22}
        ~ & 8 & 399 & 3192 & 416 & 3208 & 444 & 3200 & 498 & 3224 & 733 & 2992 & 710 & 3016 & 664 & 3036 & 767 & 3008 & 795 & 3004 & 842 & 2988 \\ \cline{2-22}
        ~ & 16 & 225 & 3600 & 232 & 3536 & 301 & 3552 & 404 & 3440 & 386 & 3176 & 326 & 3240 & 227 & 3328 & 409 & 3184 & 479 & 3200 & 587 & 3184 \\ \cline{2-22}
        ~ & 32 & 124 & 3968 & 137 & 3968 & 180 & 4000 & 210 & 3616 & 208 & 3536 & 169 & 3584 & 125 & 3808 & 234 & 3536 & 276 & 3536 & 295 & 3616 \\ \cline{2-22}
        ~ & 64 & 73 & 4672 & 83 & 4672 & 125 & 4800 & 118 & 4288 & 115 & 4000 & 84 & 4288 & 74 & 4512 & 133 & 3936 & 177 & 4064 & 160 & 4288 \\ \hline \hline 
    \end{tabular}\vspace{-3mm}
\end{table}

In Tables \ref{tab:SRECG-mem} and \ref{tab:MSDOCG-mem},  we also show the convergence and memory requirements of two ``restarted" versions, different than those of section \ref{sec:trunctest}. The first is restarted SRE-CG2 or MSDO-CG where once the relative residual norm is less than restartTol, the method is restarted with the same $t$ value. The second is restarted flexibly SRE-CG2 or flexibly MSDO-CG where once the relative residual norm is less than restartTol, the method is restarted with the $t/2$ value. In both cases, the method is only restarted once and this restart occurs at the same switch iteration of flexibly SRE-CG2 or flexibly MSDO-CG.

Restarting SRE-CG2 with restartTol = $10^{-3}$ has a negligible effect on memory reduction, and in some cases ($t=32,64$ for \skyto, \anio, \skyo) no reduction at all, since after restarting, the method requires the exact number of iterations to converge as SRE-CG2. Decreasing restartTol increases the number of iterations to convergence, i.e. increases the runtime and performed flops, without significantly reducing  the memory requirement. The same behavior is observed when comparing restarted Flexibly  SRE-CG2 to Flexibly  SRE-CG2. For restartTol $=10^{-7}$ or $10^{-5}$, the restarted  Flexibly  SRE-CG2 may converge in up to double the iterations of Flexibly  SRE-CG2 and reduce the memory requirement by at most $15\%$.
\vspace{-7mm} 

\hspace{-25mm} \begin{table}[H]
\setlength{\tabcolsep}{1pt}
\renewcommand{\arraystretch}{1.1}
\caption{\label{tab:MSDOCG-mem} Memory requirements of MSDO-CG, restarted MSDO-CG, flexibly MSDO-CG and restarted flexibly MSDO-CG with respect to number of partitions $\bf t$, and restart tolerances  $\bf restartTol$. }\vspace{-3mm}
\hspace{-13mm}    \begin{tabular}{||c||c||c|c||c|c||c|c||c|c||c|c||c|c||c|c||c|c||c|c||c|c||}
    \cline{3-22}
     \multicolumn{1}{c}{} &\multicolumn{1}{c||}{}&\multicolumn{2}{c||}{\multirow{2}{*}{\textbf{ MSDOCG}}}&\multicolumn{6}{c||}{\textbf{Restarted, restartTol =} } &\multicolumn{6}{c||}{\textbf{ Flexibly Enlarged, switchTol =} }&\multicolumn{6}{c||}{\textbf{Restarted flexibly, restartTol =} } \\
         \cline{5-22}
    \multicolumn{1}{c}{} &\multicolumn{1}{c||}{}&\multicolumn{1}{c}{} & &\multicolumn{2}{c||}{$\bf 10^{-3}$}&\multicolumn{2}{c||}{$\bf 10^{-5}$}&\multicolumn{2}{c||}{$\bf 10^{-7}$} &\multicolumn{2}{c||}{$\bf 10^{-3}$}&\multicolumn{2}{c||}{$\bf 10^{-5}$}&\multicolumn{2}{c||}{$\bf 10^{-7}$}&\multicolumn{2}{c||}{$\bf 10^{-3}$}&\multicolumn{2}{c||}{$\bf 10^{-5}$}&\multicolumn{2}{c||}{$\bf 10^{-7}$} \\
        \cline{2-22}
    \multicolumn{1}{c||}{} &$\bf t$&$\bf It$&$\bf mem$&$\bf It$&$\bf mem$&$\bf It$&$\bf mem$&$\bf It$&$\bf mem$&$\bf It$&$\bf mem$&$\bf It$&$\bf mem$&$\bf It$&$\bf mem$&$\bf It$&$\bf mem$&$\bf It$&$\bf mem$&$\bf It$&$\bf mem$\\
    \cline{1-22}
     \multirow{6}{*}{\rotatebox[origin=c]{90}{\nh} } 
 & 2 & 256 & 512 & 256 & 484 & 277 & 438 & 293 & 300 & 267 & 281 & 257 & 315 & 257 & 400 & 263 & 249 & 270 & 212 & 313 & 286 \\ \cline{2-22}
        ~ & 4 & 206 & 824 & 207 & 772 & 238 & 720 & 251 & 652 & 254 & 536 & 229 & 574 & 208 & 742 & 257 & 486 & 282 & 448 & 256 & 652 \\ \cline{2-22}
        ~ & 8 & 169 & 1352 & 169 & 1240 & 213 & 1152 & 200 & 1160 & 204 & 872 & 183 & 1008 & 170 & 1260 & 209 & 780 & 245 & 704 & 205 & 1160 \\ \cline{2-22}
        ~ & 16 & 139 & 2224 & 141 & 2032 & 172 & 1856 & 173 & 1968 & 165 & 1432 & 152 & 1664 & 139 & 2096 & 169 & 1240 & 202 & 1168 & 174 & 1968 \\ \cline{2-22}
        ~ & 32 & 107 & 3424 & 109 & 3040 & 148 & 2912 & 134 & 3168 & 135 & 2384 & 117 & 2784 & 107 & 3296 & 140 & 2016 & 178 & 1936 & 135 & 3168 \\ \cline{2-22}
        ~ & 64 & 77 & 4928 & 84 & 4352 & 93 & 4224 & 79 & 4736 & 100 & 3712 & 93 & 3840 & 77 & 4832 & 109 & 2976 & 118 & 2912 & 79 & 4736 \\ \hline\hline
         \multirow{6}{*}{\rotatebox[origin=c]{90}{\skyt} }  & 2 & 646 & 1292 & 656 & 1276 & 675 & 1280 & 900 & 958 & 751 & 769 & 744 & 779 & 660 & 1139 & 717 & 699 & 732 & 697 & 900 & 958 \\ \cline{2-22}
        ~ & 4 & 426 & 1704 & 448 & 1700 & 483 & 1696 & 739 & 1520 & 650 & 1346 & 627 & 1372 & 441 & 1642 & 655 & 1264 & 692 & 1266 & 818 & 1520 \\ \cline{2-22}
        ~ & 8 & 231 & 1848 & 247 & 1840 & 280 & 1840 & 398 & 1712 & 412 & 1716 & 383 & 1732 & 238 & 1808 & 442 & 1700 & 474 & 1696 & 552 & 1712 \\ \cline{2-22}
        ~ & 16 & 133 & 2128 & 148 & 2112 & 155 & 2112 & 227 & 2096 & 219 & 1880 & 215 & 1904 & 133 & 2112 & 247 & 1848 & 253 & 1840 & 276 & 2096 \\ \cline{2-22}
        ~ & 32 & 79 & 2528 & 92 & 2496 & 140 & 2272 & 131 & 2464 & 120 & 2144 & 79 & 2368 & 79 & 2496 & 146 & 2112 & 188 & 2208 & 165 & 2464 \\ \cline{2-22}
        ~ & 64 & 50 & 3200 & 60 & 3136 & 85 & 2752 & 82 & 3072 & 68 & 2528 & 50 & 2944 & 50 & 3136 & 89 & 2496 & 111 & 2688 & 106 & 3072 \\ \hline \hline
         \multirow{6}{*}{\rotatebox[origin=c]{90}{\ani} }  & 2 & 933 & 1866 & 949 & 1860 & 1025 & 1836 & 1178 & 1814 & 1232 & 1251 & 1172 & 1279 & 1036 & 1307 & 1266 & 1247 & 1349 & 1242 & 1497 & 1226 \\ \cline{2-22}
        ~ & 4 & 734 & 2936 & 753 & 2936 & 847 & 2912 & 944 & 2780 & 917 & 1872 & 829 & 1896 & 750 & 1998 & 950 & 1862 & 1040 & 1842 & 1139 & 1780 \\ \cline{2-22}
        ~ & 8 & 471 & 3768 & 490 & 3760 & 509 & 3760 & 709 & 3680 & 733 & 3012 & 727 & 3064 & 605 & 3416 & 750 & 2920 & 769 & 2920 & 904 & 2620 \\ \cline{2-22}
        ~ & 16 & 328 & 5248 & 349 & 5248 & 361 & 5232 & 477 & 5008 & 452 & 3784 & 439 & 3784 & 342 & 4048 & 492 & 3768 & 504 & 3760 & 623 & 3672 \\ \cline{2-22}
        ~ & 32 & 202 & 6464 & 211 & 6432 & 281 & 6304 & 302 & 6112 & 320 & 5280 & 254 & 5408 & 231 & 5472 & 338 & 5248 & 404 & 5120 & 427 & 5056 \\ \cline{2-22}
        ~ & 64 & 116 & 7424 & 132 & 7360 & 138 & 7360 & 202 & 7104 & 186 & 6496 & 180 & 6496 & 123 & 6848 & 217 & 6400 & 222 & 6368 & 280 & 6048 \\ \hline\hline
          \multirow{6}{*}{\rotatebox[origin=c]{90}{\sky} }  & 2 & 1514 & 3028 & 1528 & 3026 & 1578 & 3024 & 1908 & 3160 & 2400 & 2415 & 2359 & 2425 & 2218 & 2546 & 2406 & 2391 & 2447 & 2381 & 2761 & 2433 \\ \cline{2-22}
        ~ & 4 & 845 & 3380 & 859 & 3376 & 890 & 3376 & 1295 & 4656 & 1497 & 3024 & 1469 & 3030 & 1392 & 3046 & 1529 & 3028 & 1558 & 3024 & 1647 & 3032 \\ \cline{2-22}
        ~ & 8 & 513 & 4104 & 527 & 4096 & 576 & 4096 & 939 & 6064 & 830 & 3380 & 782 & 3384 & 671 & 3408 & 858 & 3372 & 908 & 3376 & 1022 & 3364 \\ \cline{2-22}
        ~ & 16 & 291 & 4656 & 302 & 4656 & 339 & 4640 & 539 & 4432 & 503 & 4112 & 465 & 4112 & 291 & 4424 & 524 & 4104 & 561 & 4096 & 753 & 4192 \\ \cline{2-22}
        ~ & 32 & 162 & 5184 & 174 & 5152 & 211 & 5088 & 289 & 4800 & 277 & 4640 & 236 & 4608 & 163 & 5008 & 304 & 4656 & 341 & 4624 & 412 & 4800 \\ \cline{2-22}
        ~ & 64 & 120 & 7680 & 132 & 7616 & 185 & 7488 & 217 & 7424 & 150 & 5216 & 121 & 6048 & 120 & 7072 & 174 & 5152 & 227 & 5088 & 255 & 6464 \\ \hline\hline
    \end{tabular}\vspace{-3mm}
\end{table}
Comparing restarted SRE-CG2, Flexibly  SRE-CG2, and restarted Flexibly  SRE-CG2 for restartTol = switchTol $=10^{-5}$, it is observed that restarted Flexibly  SRE-CG2 requires less memory than Flexibly  SRE-CG2, at the expense of more iterations to convergence. The rates of reduction in memory and increase in iterations differ with respect to the matrices and $t$ values. Moreover, Flexibly  SRE-CG2 requires less memory than restarted SRE-CG2, and less iterations for $t=16,32,64$. Thus, Flexibly  SRE-CG2  is the moderate choice that balances between the reduced memory and the augmented number of iterations. 
Moreover, in the case of MSDO-CG a similar convergence behavior is observed in Table \ref{tab:MSDOCG-mem}, where the it is clearer that the flexibly enlarged version requires less memory and iterations than the restarted version.
\subsection{Preconditioned versions}\label{sec:prectest}
We test the preconditioned flexibly SRE-CG2 and MSDO-CG with a Block Jacobi preconditioner.

Given the reordered/permuted matrix $A$ using Metis's kway partitioning 
for $128$ subdomains, we consider two options. The first is to define 64 subdomains by merging 2 consecutive ones. Then 
each of the 64 diagonal blocks of $A$ is  factorized using Cholesky decomposition (Table \ref{tab:precSRECG64chol}) or Incomplete Cholesky zero fill-in decomposition (Table \ref{tab:precSRECG64ichol}). 

The same preconditioner $M = LL^t$ is used for all the $t$ values $2,4,8,16,32,64$, where $L$ is given by \eqref{eq:cholprec}
 with the $L_i$'s being lower triangular blocks for $i=1,2,..,64$, \vspace{1mm}
\begin{eqnarray} L &=&  \begin{bmatrix}
L_1 &0&0&\hdots&0\\
0&\ddots&0&\hdots&0\\
0&0&L_i&\hdots&0\\
0&0&0&\ddots&0\\
0&0&\hdots&0&L_{64}
\end{bmatrix}. \label{eq:cholprec}\end{eqnarray}\vspace{1mm} 
For each $t<64$, the $t$ blocks are the union of $64/t$ consecutive blocks. 

The advantage of a block Jacobi preconditioner is that it is parallelizable without adding any layer of communication to the unpreconditioned algorithm, as processor/core $i$ would compute its corresponding part of the output vector (or block of vectors) using the lower triangular block $L_i$.
For example, computing $\widehat{r}_k = L^{-1} r_{k}$ is equivalent to preforming 64 independent forward substitutions, assuming $L$ has the form \eqref{eq:cholprec}. Computing $P_k = L^{-\mathsf{T}}[T^{t}(\widehat{r}_k)]$ is equivalent to preforming 64 independent block backward substitutions. Similarly, computing $ M^{-1} r_{k}$ is equivalent to performing 64 independent forward substitutions followed by 64 independent backward substitutions.

In Tables \ref{tab:precSRECG64chol} and \ref{tab:precSRECG64ichol}, the results for switchTol = $10^{-5}$ and $10^{-7}$ are only shown, as similarly to the unpreconditioned case, setting switchTol to $10^{-3}$ leads to a very early switch (switch iteration = $O(10)$). Moreover, setting switchTol = $10^{-7}$, leads to a late switch, and in some cases ($t=64$) no switching occurs, implying no memory reduction. Thus again, switchTol = $10^{-5}$ is the best option.

In general, by preconditioning, the number of iterations is significantly reduced, specifically for the ill conditioned matrices \anio\, (from 4179 to 73) and \skyo \, (from 5980 to 283), implying a reduction in required memory in all the methods. Moreover, the obtained solution is better since the relative error is smaller $O(10^{-6}), O(10^{-7}),O(10^{-7})$, as compared to the unpreconditioned case $O(10^{-4}), O(10^{-5}),O(10^{-6})$ for the ill-conditioned matrices.

On the other hand, the difference between the Cholesky and Incomplete Cholesky Block Jacobi Preconditioner is that in the first, $A_i$, the diagonal block of the matrix $A$, is equal to $L_iL_i^{\mathsf{T}}$ (up to numerical errors). However, in the Incomplete Cholesky version, the obtained lower triangular matrix $L_i$ has the same sparsity pattern as the lower triangular part of $A_i$, but $A_i\neq L_iL_i^{\mathsf{T}}$. Comparing the results of Incomplete Cholesky Block Jacobi Preconditioner in Table \ref{tab:precSRECG64ichol} to that of  Cholesky Block Jacobi Preconditioner in Table \ref{tab:precSRECG64chol} shows an increase of 5$\%$ (\anio), 10$\%$ (\skyto), 15$\%$ (\skyo) and 20$\%$ (\nho). 

Note that for more parallelism when applying the preconditioner, it is possible to consider the 128 diagonal blocks and factorize each using  Cholesky decomposition (Table \ref{tab:precSRECG128chol}) or Incomplete Cholesky zero fill-in decomposition (Table \ref{tab:precSRECG128ichol}). Moreover, it is possible to reorder/permute the matrix $A$ using Metis's kway partitioning with 256, 512, or 1024 partitions and use the corresponding diagonal blocks to define the preconditioner. However, with smaller diagonal blocks, and more off-diagonal information neglected, the preconditioner might become less efficient as it is not as good an approximation of $A^{-1}$, i.e. $A^{-1} = L^{-\mathsf{T}} L^{-1} + Err$. 

This effect can be seen Figures \ref{fig:cholnh}, \ref{fig:cholsky3}, \ref{fig:cholani}, and \ref{fig:cholsky2} where we plot the convergence (number of iterations) of preconditioned flexibly SRE-CG2 (left) and  flexibly MSDO-CG (right) for switchTol = $10^{-5}$ with different Block Jacobi preconditioners for $t=4,8,16,32,64$.  We compare the effect of using incomplete Cholesky (iChol) decomposition of the diagonal blocks $A_i$ instead of the full Cholesky (Chol) decomposition, and compare the effect of using 64 diagonal blocks (Chol64, iChol64) versus 128  diagonal blocks (Chol128, iChol128). \vspace{-8mm}

\begin{table}[H]
\setlength{\tabcolsep}{1pt}
\renewcommand{\arraystretch}{1.1}
\caption{\label{tab:precSRECG64chol} Convergence and memory requirements of Block Jacobi Cholesky 64 Preconditioned CG, SRE-CG2, flexibly SRE-CG2, MSDO-CG,and  flexibly MSDO-CG and with respect to number of partitions $\bf t=2,4,8,16,32,64$, and switch tolerances  $\bf switchTol=10^{-5},10^{-7}$. }\vspace{-3mm}
\hspace{-22mm} 
    \begin{tabular}{||c||c|c||c|c|c||c|c|c|c||c|c|c|c||c|c|c||c|c|c|c||c|c|c|c||}
    \cline{2-25}
     \multicolumn{1}{c||}{} &\multicolumn{2}{c||}{\multirow{2}{*}{\textbf{CG}}}&\multicolumn{3}{c||}{\multirow{2}{*}{\textbf{SRE-CG2} }} &\multicolumn{8}{c||}{\textbf{ Flexibly  SRE-CG2, switchTol =} }&\multicolumn{3}{c||}{\multirow{2}{*}{\textbf{MSDOCG} }} &\multicolumn{8}{c||}{\textbf{Flexibly  MSDOCG, switchTol =} } \\
    \multicolumn{1}{c||}{} &\multicolumn{1}{c}{} & &\multicolumn{1}{c}{}&\multicolumn{1}{c}{} & &\multicolumn{4}{c||}{$\bf 10^{-5}$}&\multicolumn{4}{c||}{$\bf 10^{-7}$}&\multicolumn{1}{c}{} &\multicolumn{1}{c}{} &\multicolumn{1}{c||}{}&\multicolumn{4}{c||}{$\bf 10^{-5}$}&\multicolumn{4}{c||}{$\bf 10^{-7}$} 
    \\
        \cline{2-25}
    \multicolumn{1}{c||}{} &$\bf It$&$\bf RE$&$\bf It$&$\bf mem$&$\bf RelEr$&$\bf It$&$\bf sIt$&$\bf mem$&$\bf RelEr$&$\bf It$&$\bf sIt$&$\bf mem$&$\bf RelEr$&$\bf It$&$\bf mem$&$\bf RelEr$&$\bf It$&$\bf sIt$&$\bf mem$&$\bf RelEr$&$\bf It$&$\bf sIt$&$\bf mem$&$\bf RelEr$
   \\
    \cline{1-25}
     \multirow{6}{*}{\rotatebox[origin=c]{90}{\nh} }  &  \multirow{6}{*}{\rotatebox[origin=c]{90}{91}} & \multirow{6}{*}{\rotatebox[origin=c]{90}{ 3.81E-7}}  & 79 & 158 & 1.1E-7 & 93 & 16 & 109 & 1.4E-7 & 79 & 66 & 145 & 1.2E-7 & 79 & 158 & 2.7E-7 & 89 & 18 & 107 & 2.3E-7 & 79 & 68 & 147 & 3.2E-7 \\ \cline{4-25}
        ~ & ~ & ~ & 64 & 256 & 8.5E-8 & 68 & 35 & 206 & 7.8E-8 & 64 & 57 & 242 & 8.2E-8 & 68 & 272 & 8.1E-8 & 72 & 43 & 230 & 7.8E-8 & 68 & 60 & 256 & 8.4E-8 \\ \cline{4-25}
        ~ & ~ & ~ & 51 & 408 & 3.0E-8 & 51 & 35 & 344 & 6.2E-8 & 51 & 47 & 392 & 3.5E-8 & 56 & 448 & 5.9E-8 & 57 & 40 & 388 & 7.7E-8 & 56 & 51 & 428 & 7.3E-8 \\ \cline{4-25}
        ~ & ~ & ~ & 39 & 624 & 2.3E-8 & 39 & 29 & 544 & 3.0E-8 & 39 & 37 & 608 & 2.3E-8 & 46 & 736 & 3.2E-8 & 46 & 35 & 648 & 4.9E-8 & 46 & 43 & 712 & 4.5E-8 \\ \cline{4-25}
        ~ & ~ & ~ & 31 & 992 & 8.1E-9 & 31 & 24 & 880 & 1.5E-8 & 31 & 30 & 976 & 8.5E-9 & 35 & 1120 & 2.1E-8 & 38 & 17 & 880 & 2.7E-8 & 35 & 34 & 1104 & 2.2E-8 \\ \cline{4-25}
        ~ & ~ & ~ & 24 & 1536 & 2.7E-9 & 24 & 20 & 1408 & 4.8E-9 & 24 & - & 1536 & 2.8E-9 & 27 & 1728 & 1.8E-8 & 28 & 24 & 1664 & 6.0E-9 & 27 & - & 1728 & 2.0E-8 \\ \hline \hline
        \multirow{6}{*}{\rotatebox[origin=c]{90}{\skyt} }  &  \multirow{6}{*}{\rotatebox[origin=c]{90}{227}} & \multirow{6}{*}{\rotatebox[origin=c]{90}{ 1.41E-5 }}  & 186 & 372 & 5.5E-6 & 202 & 42 & 244 & 1.3E-5 & 190 & 125 & 315 & 7.6E-6 & 210 & 420 & 1.4E-5 & 214 & 32 & 246 & 1.4E-5 & 214 & 147 & 361 & 1.5E-5 \\ \cline{4-25}
        ~ & ~ & ~ & 156 & 624 & 1.7E-6 & 167 & 64 & 462 & 4.8E-6 & 157 & 113 & 540 & 7.6E-6 & 205 & 820 & 7.6E-6 & 227 & 13 & 480 & 8.3E-6 & 206 & 130 & 672 & 1.3E-5 \\ \cline{4-25}
        ~ & ~ & ~ & 101 & 808 & 5.6E-7 & 127 & 40 & 668 & 4.9E-6 & 103 & 92 & 780 & 1.1E-6 & 141 & 1128 & 2.2E-6 & 177 & 31 & 832 & 7.1E-6 & 141 & 115 & 1024 & 1.2E-5 \\ \cline{4-25}
        ~ & ~ & ~ & 56 & 896 & 5.3E-7 & 66 & 40 & 848 & 3.2E-7 & 56 & - & 896 & 5.3E-7 & 96 & 1536 & 8.2E-6 & 130 & 19 & 1192 & 1.2E-5 & 105 & 68 & 1384 & 1.2E-5 \\ \cline{4-25}
        ~ & ~ & ~ & 34 & 1088 & 1.6E-7 & 40 & 18 & 928 & 8.7E-7 & 34 & - & 1088 & 1.6E-7 & 72 & 2304 & 1.6E-6 & 88 & 23 & 1776 & 4.5E-6 & 73 & 67 & 2240 & 5.8E-7 \\ \cline{4-25}
        ~ & ~ & ~ & 22 & 1408 & 3.3E-7 & 22 & 18 & 1280 & 4.8E-7 & 22 & - & 1408 & 3.3E-7 & 45 & 2880 & 5.8E-7 & 63 & 10 & 2336 & 1.4E-6 & 45 & - & 2880 & 5.8E-7 \\ \hline\hline
          \multirow{6}{*}{\rotatebox[origin=c]{90}{\ani} }  &  \multirow{6}{*}{\rotatebox[origin=c]{90}{73}} & \multirow{6}{*}{\rotatebox[origin=c]{90}{1.17E-6  }} & 71 & 142 & 1.6E-6 & 73 & 19 & 92 & 1.3E-6 & 73 & 55 & 128 & 2.2E-6 & 73 & 146 & 3.4E-7 & 74 & 31 & 105 & 5.5E-7 & 73 & 57 & 130 & 3.6E-7 \\ \cline{4-25}
        ~ & ~ & ~ & 66 & 264 & 1.2E-6 & 72 & 17 & 178 & 6.1E-7 & 67 & 54 & 242 & 7.6E-7 & 68 & 272 & 8.0E-7 & 70 & 26 & 192 & 5.2E-7 & 68 & 55 & 246 & 8.1E-7 \\ \cline{4-25}
        ~ & ~ & ~ & 61 & 488 & 6.6E-7 & 67 & 22 & 356 & 8.0E-7 & 61 & 50 & 444 & 7.0E-7 & 66 & 528 & 6.7E-7 & 67 & 26 & 372 & 7.7E-7 & 66 & 55 & 484 & 6.8E-7 \\ \cline{4-25}
        ~ & ~ & ~ & 56 & 896 & 1.0E-6 & 58 & 32 & 720 & 9.6E-7 & 56 & 44 & 800 & 1.7E-6 & 61 & 976 & 1.9E-6 & 63 & 29 & 736 & 1.1E-6 & 61 & 48 & 872 & 2.0E-6 \\ \cline{4-25}
        ~ & ~ & ~ & 44 & 1408 & 7.3E-6 & 49 & 23 & 1152 & 3.9E-6 & 46 & 39 & 1360 & 4.5E-6 & 54 & 1728 & 3.9E-6 & 56 & 25 & 1296 & 3.4E-6 & 54 & 47 & 1616 & 4.4E-6 \\ \cline{4-25}
        ~ & ~ & ~ & 36 & 2304 & 5.7E-6 & 38 & 22 & 1920 & 9.3E-6 & 37 & 31 & 2176 & 7.4E-6 & 43 & 2752 & 5.0E-6 & 47 & 23 & 2240 & 4.1E-6 & 43 & 39 & 2624 & 5.5E-6 \\ \hline\hline
        \multirow{6}{*}{\rotatebox[origin=c]{90}{\sky} }  &  \multirow{6}{*}{\rotatebox[origin=c]{90}{283}} & \multirow{6}{*}{\rotatebox[origin=c]{90}{ 1.21E-4}}  &  183 & 366 & 2.6E-4 & 230 & 26 & 256 & 1.7E-4 & 229 & 28 & 257 & 1.8E-4 & 239 & 478 & 1.8E-6 & 242 & 26 & 268 & 1.5E-4 & 221 & 124 & 345 & 6.4E-4 \\ \cline{4-25}
        ~ & ~ & ~ & 106 & 424 & 9.1E-7 & 160 & 22 & 364 & 1.4E-4 & 106 & 94 & 400 & 1.2E-6 & 168 & 672 & 1.3E-6 & 201 & 25 & 452 & 2.7E-4 & 221 & 34 & 510 & 1.3E-6 \\ \cline{4-25}
        ~ & ~ & ~ & 68 & 544 & 8.9E-7 & 89 & 23 & 448 & 1.3E-6 & 68 & 65 & 532 & 9.2E-7 & 117 & 936 & 1.7E-6 & 138 & 29 & 668 & 1.6E-6 & 117 & 116 & 932 & 1.7E-6 \\ \cline{4-25}
        ~ & ~ & ~ & 44 & 704 & 6.4E-7 & 53 & 19 & 576 & 1.0E-6 & 44 & 42 & 688 & 6.7E-7 & 80 & 1280 & 6.2E-7 & 101 & 21 & 976 & 9.1E-7 & 80 & - & 1280 & 6.2E-7 \\ \cline{4-25}
        ~ & ~ & ~ & 32 & 1024 & 5.3E-7 & 35 & 18 & 848 & 1.3E-6 & 32 & - & 1024 & 5.7E-7 & 52 & 1664 & 9.3E-7 & 69 & 16 & 1360 & 1.0E-6 & 52 & - & 1664 & 9.3E-7 \\ \cline{4-25}
        ~ & ~ & ~ & 20 & 1280 & 4.7E-7 & 21 & 16 & 1184 & 5.3E-7 & 20 & - & 1280 & 5.6E-7 & 37 & 2368 & 3.5E-7 & 45 & 13 & 1856 & 4.2E-7 & 37 & - & 2368 & 3.5E-7 \\ \hline\hline
    \end{tabular}\vspace{-3mm}
\end{table}

\begin{figure}[H] 
 \centering
 \hspace{-8mm} 
 \includegraphics[width=0.55\textwidth]{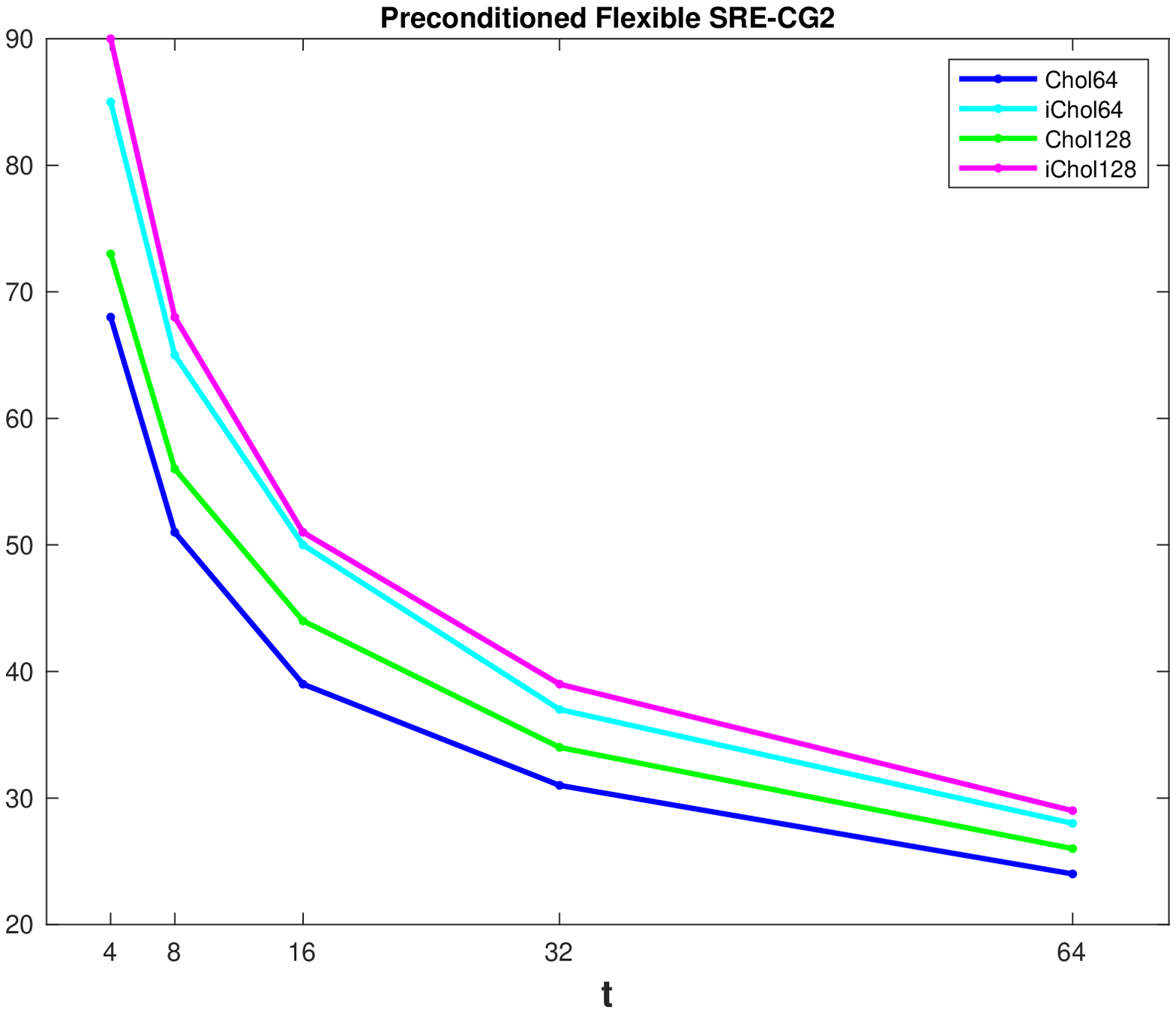} \hspace{-8mm} 
\includegraphics[width=0.55\textwidth]{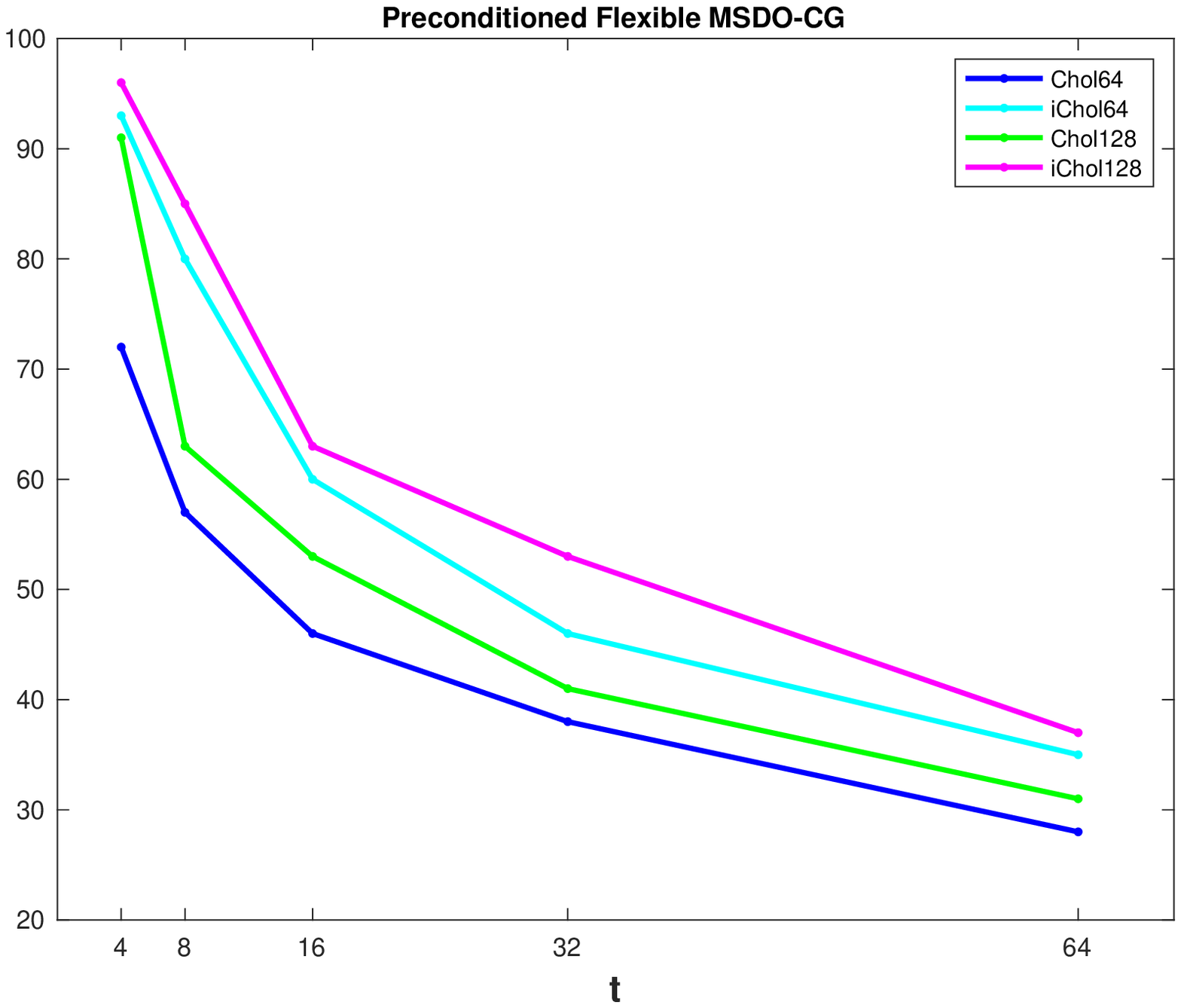}\\
\vspace{-2mm}
   \caption{Convergence of preconditioned flexibly enlarged methods with switchTol = $10^{-5}$ for matrix \nho} \label{fig:cholnh}\vspace{-10mm}
\end{figure}

\begin{table}[H]
\setlength{\tabcolsep}{1pt}
\renewcommand{\arraystretch}{1.1}
\caption{\label{tab:precSRECG64ichol} Convergence and memory requirements of Block Jacobi Incomplete Cholesky 64 Preconditioned CG, SRE-CG2, flexibly SRE-CG2, MSDO-CG,and  flexibly MSDO-CG and with respect to number of partitions $\bf t=2,4,8,16,32,64$, and switch tolerances  $\bf switchTol=10^{-5},10^{-7}$. }\vspace{-3mm}
\hspace{-22mm} 
    \begin{tabular}{||c||c|c||c|c|c||c|c|c|c||c|c|c|c||c|c|c||c|c|c|c||c|c|c|c||}
    \cline{2-25}
     \multicolumn{1}{c||}{} &\multicolumn{2}{c||}{\multirow{2}{*}{\textbf{CG}}}&\multicolumn{3}{c||}{\multirow{2}{*}{\textbf{SRE-CG2} }} &\multicolumn{8}{c||}{\textbf{ Flexibly  SRE-CG2, switchTol =} }&\multicolumn{3}{c||}{\multirow{2}{*}{\textbf{MSDOCG} }} &\multicolumn{8}{c||}{\textbf{Flexibly  MSDOCG, switchTol =} } \\
    \multicolumn{1}{c||}{} &\multicolumn{1}{c}{} & &\multicolumn{1}{c}{}&\multicolumn{1}{c}{} & &\multicolumn{4}{c||}{$\bf 10^{-5}$}&\multicolumn{4}{c||}{$\bf 10^{-7}$}&\multicolumn{1}{c}{} &\multicolumn{1}{c}{} &\multicolumn{1}{c||}{}&\multicolumn{4}{c||}{$\bf 10^{-5}$}&\multicolumn{4}{c||}{$\bf 10^{-7}$} 
    \\
        \cline{2-25}
    \multicolumn{1}{c||}{} &$\bf It$&$\bf RE$&$\bf It$&$\bf mem$&$\bf RelEr$&$\bf It$&$\bf sIt$&$\bf mem$&$\bf RelEr$&$\bf It$&$\bf sIt$&$\bf mem$&$\bf RelEr$&$\bf It$&$\bf mem$&$\bf RelEr$&$\bf It$&$\bf sIt$&$\bf mem$&$\bf RelEr$&$\bf It$&$\bf sIt$&$\bf mem$&$\bf RelEr$
   \\
    \cline{1-25}
     \multirow{6}{*}{\rotatebox[origin=c]{90}{\nh} }   &  \multirow{6}{*}{\rotatebox[origin=c]{90}{116}} & \multirow{6}{*}{\rotatebox[origin=c]{90}{ 3.19E-7}}  & 102 & 204 & 1.6E-7 & 118 & 34 & 152 & 2.2E-7 & 105 & 66 & 171 & 2.2E-7 & 109 & 218 & 2.3E-7 & 118 & 56 & 174 & 1.6E-7 & 111 & 75 & 186 & 1.6E-7 \\ \cline{4-25}
        ~ & ~ & ~ & 81 & 324 & 1.1E-7 & 85 & 53 & 276 & 1.0E-7 & 81 & 71 & 304 & 1.5E-7 & 88 & 352 & 1.7E-7 & 93 & 56 & 298 & 9.3E-8 & 89 & 75 & 328 & 1.5E-7 \\ \cline{4-25}
        ~ & ~ & ~ & 63 & 504 & 5.3E-8 & 65 & 44 & 436 & 6.0E-8 & 64 & 58 & 488 & 4.8E-8 & 72 & 576 & 4.7E-8 & 80 & 26 & 424 & 7.6E-8 & 72 & 65 & 548 & 5.6E-8 \\ \cline{4-25}
        ~ & ~ & ~ & 49 & 784 & 2.0E-8 & 50 & 35 & 680 & 2.5E-8 & 49 & 45 & 752 & 2.4E-8 & 59 & 944 & 4.1E-8 & 60 & 45 & 840 & 3.7E-8 & 59 & 56 & 920 & 4.3E-8 \\ \cline{4-25}
        ~ & ~ & ~ & 36 & 1152 & 1.8E-8 & 37 & 28 & 1040 & 1.9E-8 & 36 & 35 & 1136 & 1.8E-8 & 46 & 1472 & 2.2E-8 & 46 & 38 & 1344 & 3.0E-8 & 46 & 44 & 1440 & 2.6E-8 \\ \cline{4-25}
        ~ & ~ & ~ & 28 & 1792 & 7.1E-9 & 28 & 23 & 1632 & 1.1E-8 & 28 & - & 1792 & 7.0E-9 & 35 & 2240 & 1.2E-8 & 35 & 31 & 2112 & 1.7E-8 & 35 & - & 2240 & 1.3E-8 \\ \hline\hline
               \multirow{6}{*}{\rotatebox[origin=c]{90}{\skyt} }   &  \multirow{6}{*}{\rotatebox[origin=c]{90}{250}} & \multirow{6}{*}{\rotatebox[origin=c]{90}{ 1.04E-5}}  & 198 & 396 & 1.4E-5 & 225 & 26 & 251 & 1.6E-5 & 208 & 161 & 369 & 8.6E-6 & 250 & 500 & 1.2E-5 & 240 & 55 & 295 & 8.9E-6 & 258 & 195 & 453 & 1.0E-5 \\ \cline{4-25}
        ~ & ~ & ~ & 164 & 656 & 3.1E-6 & 208 & 14 & 444 & 4.5E-6 & 168 & 131 & 598 & 2.9E-6 & 211 & 844 & 1.3E-5 & 229 & 34 & 526 & 2.1E-5 & 218 & 180 & 796 & 9.7E-6 \\ \cline{4-25}
        ~ & ~ & ~ & 116 & 928 & 3.8E-7 & 141 & 41 & 728 & 3.4E-6 & 120 & 104 & 896 & 6.0E-7 & 146 & 1168 & 1.7E-5 & 199 & 22 & 884 & 4.7E-6 & 158 & 98 & 1024 & 9.3E-6 \\ \cline{4-25}
        ~ & ~ & ~ & 65 & 1040 & 3.0E-7 & 90 & 32 & 976 & 2.1E-7 & 68 & 58 & 1008 & 4.3E-7 & 115 & 1840 & 3.7E-6 & 140 & 23 & 1304 & 8.3E-6 & 118 & 96 & 1712 & 3.3E-6 \\ \cline{4-25}
        ~ & ~ & ~ & 39 & 1248 & 2.5E-7 & 41 & 32 & 1168 & 4.3E-7 & 39 & - & 1248 & 2.5E-7 & 80 & 2560 & 9.5E-6 & 90 & 28 & 1888 & 8.0E-6 & 81 & 76 & 2512 & 9.5E-6 \\ \cline{4-25}
        ~ & ~ & ~ & 25 & 1600 & 2.6E-7 & 31 & 10 & 1312 & 2.8E-7 & 25 & - & 1600 & 2.7E-7 & 53 & 3392 & 3.6E-7 & 72 & 16 & 2816 & 2.6E-7 & 53 & - & 3392 & 3.6E-7 \\ \hline\hline
        \multirow{6}{*}{\rotatebox[origin=c]{90}{\ani} }   &  \multirow{6}{*}{\rotatebox[origin=c]{90}{77}} & \multirow{6}{*}{\rotatebox[origin=c]{90}{ 2.02E-6}}    & 75 & 150 & 6.5E-7 & 77 & 28 & 105 & 2.1E-6 & 78 & 61 & 139 & 8.2E-7 & 74 & 148 & 3.7E-7 & 79 & 27 & 106 & 5.2E-7 & 74 & 61 & 135 & 4.0E-7 \\ \cline{4-25}
        ~ & ~ & ~ & 68 & 272 & 1.0E-6 & 69 & 28 & 194 & 5.8E-7 & 68 & 56 & 248 & 9.0E-7 & 70 & 280 & 8.0E-7 & 72 & 32 & 208 & 5.3E-7 & 70 & 58 & 256 & 8.0E-7 \\ \cline{4-25}
        ~ & ~ & ~ & 64 & 512 & 7.3E-7 & 65 & 28 & 372 & 9.5E-7 & 64 & 52 & 464 & 6.8E-7 & 68 & 544 & 7.1E-7 & 68 & 31 & 396 & 7.4E-7 & 68 & 58 & 504 & 7.0E-7 \\ \cline{4-25}
        ~ & ~ & ~ & 58 & 928 & 1.5E-6 & 62 & 26 & 704 & 7.8E-7 & 58 & 51 & 872 & 1.7E-6 & 63 & 1008 & 1.9E-6 & 63 & 31 & 752 & 1.4E-6 & 63 & 53 & 928 & 1.5E-6 \\ \cline{4-25}
        ~ & ~ & ~ & 48 & 1536 & 4.6E-6 & 51 & 23 & 1184 & 4.4E-6 & 49 & 40 & 1424 & 4.6E-6 & 56 & 1792 & 3.8E-6 & 59 & 26 & 1360 & 2.8E-6 & 57 & 50 & 1712 & 2.9E-6 \\ \cline{4-25}
        ~ & ~ & ~ & 38 & 2432 & 6.2E-6 & 40 & 24 & 2048 & 8.7E-6 & 39 & 33 & 2304 & 5.5E-6 & 47 & 3008 & 3.0E-6 & 49 & 23 & 2304 & 5.1E-6 & 48 & 42 & 2880 & 2.2E-6 \\ \hline\hline
          \multirow{6}{*}{\rotatebox[origin=c]{90}{\sky} }   &  \multirow{6}{*}{\rotatebox[origin=c]{90}{325}} & \multirow{6}{*}{\rotatebox[origin=c]{90}{ 1.61E-4}}  & 229 & 458 & 5.2E-6 & 267 & 28 & 295 & 4.6E-4 & 258 & 70 & 328 & 3.2E-4 & 299 & 598 & 1.2E-4 & 287 & 28 & 315 & 1.7E-4 & 262 & 112 & 374 & 8.2E-4 \\ \cline{4-25}
        ~ & ~ & ~ & 139 & 556 & 6.7E-7 & 216 & 27 & 486 & 1.1E-6 & 147 & 100 & 494 & 1.2E-6 & 199 & 796 & 2.2E-6 & 260 & 18 & 556 & 7.7E-4 & 236 & 110 & 692 & 2.2E-6 \\ \cline{4-25}
        ~ & ~ & ~ & 83 & 664 & 1.4E-6 & 113 & 26 & 556 & 1.4E-6 & 112 & 31 & 572 & 1.1E-6 & 140 & 1120 & 8.9E-7 & 180 & 19 & 796 & 1.3E-6 & 157 & 89 & 984 & 2.5E-6 \\ \cline{4-25}
        ~ & ~ & ~ & 54 & 864 & 1.1E-6 & 62 & 23 & 680 & 1.7E-6 & 54 & 51 & 840 & 1.2E-6 & 98 & 1568 & 8.4E-7 & 132 & 20 & 1216 & 1.1E-6 & 117 & 41 & 1264 & 2.0E-6 \\ \cline{4-25}
        ~ & ~ & ~ & 40 & 1280 & 2.7E-7 & 43 & 23 & 1056 & 7.0E-7 & 40 & 39 & 1264 & 3.0E-7 & 71 & 2272 & 1.9E-7 & 88 & 20 & 1728 & 5.5E-7 & 71 & - & 2272 & 1.9E-7 \\ \cline{4-25}
        ~ & ~ & ~ & 25 & 1600 & 3.2E-7 & 25 & 20 & 1440 & 5.3E-7 & 25 & - & 1600 & 3.3E-7 & 46 & 2944 & 8.5E-7 & 62 & 17 & 2528 & 2.8E-7 & 46 & - & 2944 & 8.5E-7 \\ \hline\hline
         \end{tabular}
\end{table}
\newpage

\begin{figure}[H] 
 \centering
 \hspace{-8mm}  \includegraphics[width=0.55\textwidth]{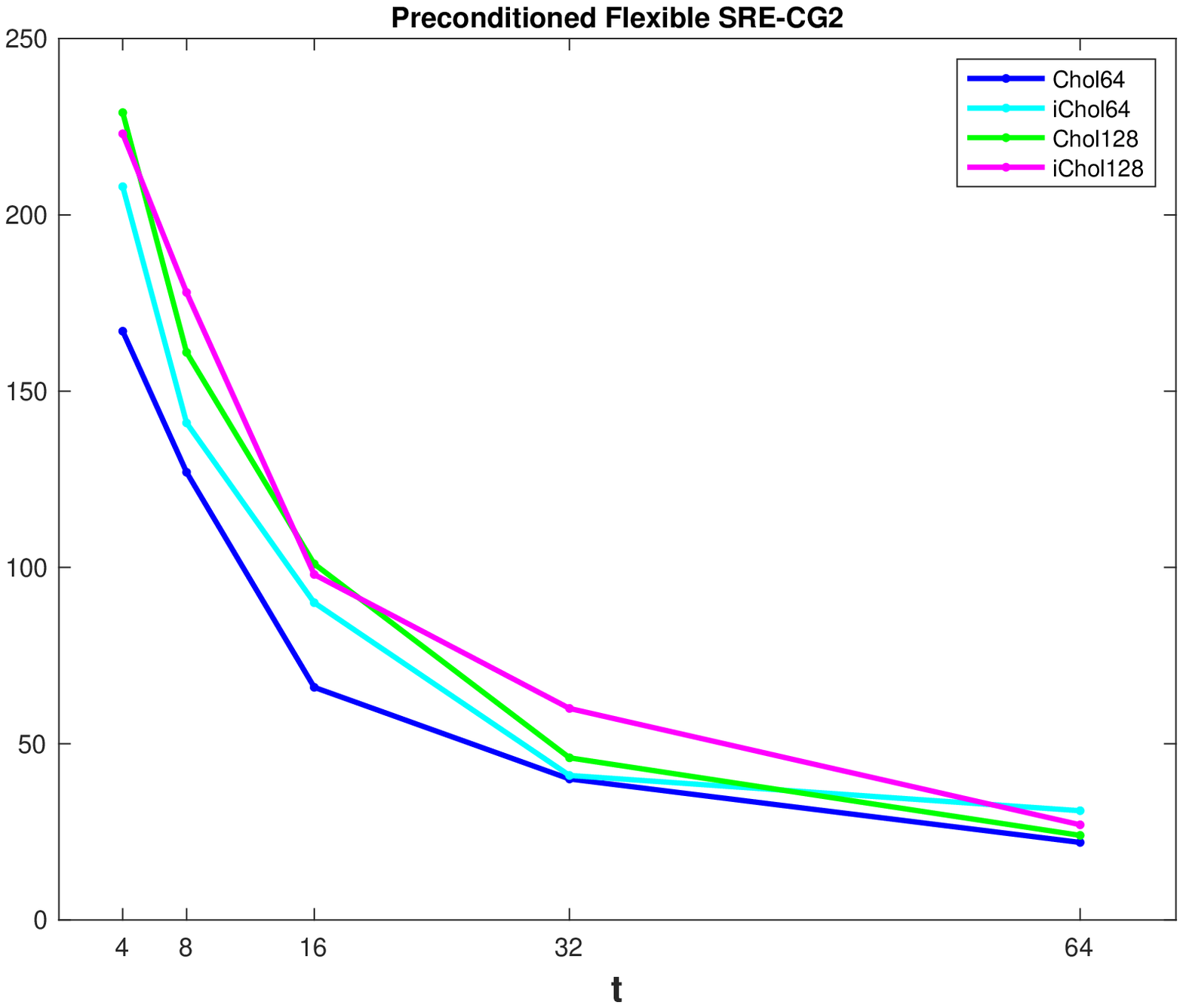} \hspace{-8mm} 
 \includegraphics[width=0.55\textwidth]{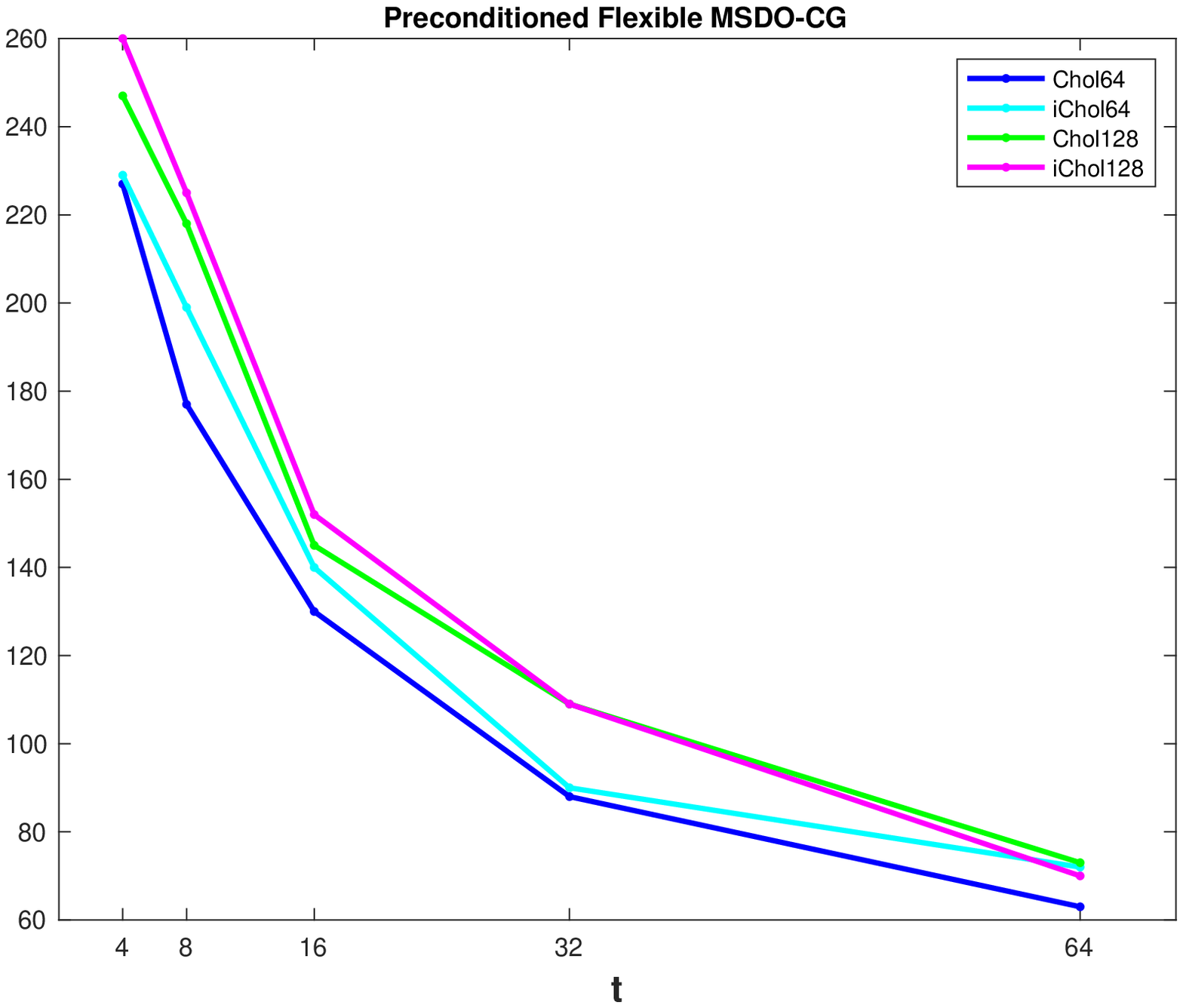}\\
\vspace{-2mm}
   \caption{Convergence of preconditioned flexibly enlarged methods with switchTol = $10^{-5}$ for matrix \skyto} \label{fig:cholsky3}\vspace{-10mm}
\end{figure}

\begin{table}[H]
\setlength{\tabcolsep}{1pt}
\renewcommand{\arraystretch}{1.1}
\caption{\label{tab:precSRECG128chol} Convergence and memory requirements of Block Jacobi Cholesky 128 Preconditioned CG, SRE-CG2, flexibly SRE-CG2, MSDO-CG,and  flexibly MSDO-CG and with respect to number of partitions $\bf t=2,4,8,16,32,64$, and switch tolerances  $\bf switchTol=10^{-5},10^{-7}$. }\vspace{-3mm}
\hspace{-22mm} 
    \begin{tabular}{||c||c|c||c|c|c||c|c|c|c||c|c|c|c||c|c|c||c|c|c|c||c|c|c|c||}
    \cline{2-25}
     \multicolumn{1}{c||}{} &\multicolumn{2}{c||}{\multirow{2}{*}{\textbf{CG}}}&\multicolumn{3}{c||}{\multirow{2}{*}{\textbf{SRE-CG2} }} &\multicolumn{8}{c||}{\textbf{ Flexibly  SRE-CG2, switchTol =} }&\multicolumn{3}{c||}{\multirow{2}{*}{\textbf{MSDOCG} }} &\multicolumn{8}{c||}{\textbf{Flexibly  MSDOCG, switchTol =} } \\
    \multicolumn{1}{c||}{} &\multicolumn{1}{c}{} & &\multicolumn{1}{c}{}&\multicolumn{1}{c}{} & &\multicolumn{4}{c||}{$\bf 10^{-5}$}&\multicolumn{4}{c||}{$\bf 10^{-7}$}&\multicolumn{1}{c}{} &\multicolumn{1}{c}{} &\multicolumn{1}{c||}{}&\multicolumn{4}{c||}{$\bf 10^{-5}$}&\multicolumn{4}{c||}{$\bf 10^{-7}$} 
    \\
        \cline{2-25}
    \multicolumn{1}{c||}{} &$\bf It$&$\bf RE$&$\bf It$&$\bf mem$&$\bf RelEr$&$\bf It$&$\bf sIt$&$\bf mem$&$\bf RelEr$&$\bf It$&$\bf sIt$&$\bf mem$&$\bf RelEr$&$\bf It$&$\bf mem$&$\bf RelEr$&$\bf It$&$\bf sIt$&$\bf mem$&$\bf RelEr$&$\bf It$&$\bf sIt$&$\bf mem$&$\bf RelEr$
   \\
    \cline{1-25}
     \multirow{6}{*}{\rotatebox[origin=c]{90}{\nh} } 
          &  \multirow{6}{*}{\rotatebox[origin=c]{90}{101}} & \multirow{6}{*}{\rotatebox[origin=c]{90}{ 2.77E-7}} & 89 & 178 & 1.6E-7 & 96 & 37 & 133 & 2.2E-7 & 89 & 77 & 166 & 1.6E-7 & 92 & 184 & 1.5E-7 & 99 & 49 & 148 & 1.4E-7 & 92 & 78 & 170 & 1.5E-7 \\ \cline{4-25}
        ~ & ~ & ~ & 71 & 284 & 7.6E-8 & 73 & 46 & 238 & 8.4E-8 & 71 & 62 & 266 & 8.0E-8 & 77 & 308 & 9.1E-8 & 91 & 8 & 198 & 1.4E-7 & 79 & 66 & 290 & 8.0E-8 \\ \cline{4-25}
        ~ & ~ & ~ & 55 & 440 & 3.8E-8 & 56 & 39 & 380 & 4.7E-8 & 55 & 50 & 420 & 3.91E-8 & 62 & 496 & 7.0E-8 & 63 & 44 & 428 & 7.8E-8 & 62 & 56 & 472 & 8.0E-8 \\ \cline{4-25}
        ~  & ~ & ~ & 43 & 688 & 2.4E-8 & 44 & 31 & 600 & 2.1E-8 & 43 & 40 & 664 & 2.5E-8 & 52 & 832 & 2.4E-8 & 53 & 39 & 736 & 3.6E-8 & 52 & 48 & 800 & 2.7E-8 \\ \cline{4-25}
        ~  & ~ & ~ & 34 & 1088 & 1.3E-8 & 34 & 26 & 960 & 1.9E-8 & 34 & 33 & 1072 & 1.3E-8 & 40 & 1280 & 2.4E-8 & 41 & 32 & 1168 & 1.8E-8 & 40 & 39 & 1264 & 2.4E-8 \\ \cline{4-25}
        ~  & ~ & ~ & 26 & 1664 & 7.5E-9 & 26 & 21 & 1504 & 1.3E-8 & 26 & - & 1664 & 7.6E-9 & 31 & 1984 & 1.1E-8 & 31 & 27 & 1856 & 1.8E-8 & 31 & - & 1984 & 1.3E-8 \\ \hline\hline
         \multirow{6}{*}{\rotatebox[origin=c]{90}{\skyt}}  & \multirow{6}{*}{\rotatebox[origin=c]{90}{250}} & \multirow{6}{*}{\rotatebox[origin=c]{90}{9.23E-6}} & 210 & 420 & 1.4E-5 & 243 & 15 & 258 & 8.1E-6 & 221 & 103 & 324 & 1.3E-5 & 245 & 490 & 1.4E-5 & 252 & 60 & 312 & 1.4E-5 & 244 & 180 & 424 & 1.9E-5 \\ \cline{4-25}
        ~  & ~ & ~ & 166 & 664 & 6.8E-6 & 229 & 15 & 488 & 3.4E-6 & 178 & 106 & 568 & 4.9E-6 & 230 & 920 & 8.7E-6 & 247 & 39 & 572 & 1.6E-5 & 251 & 94 & 690 & 1.0E-5 \\ \cline{4-25}
        ~ & ~ & ~ & 119 & 952 & 2.8E-7 & 161 & 25 & 744 & 4.8E-6 & 119 & 101 & 880 & 8.4E-6 & 156 & 1248 & 3.3E-6 & 218 & 13 & 924 & 1.3E-5 & 161 & 128 & 1156 & 4.5E-6 \\ \cline{4-25}
        ~  & ~ & ~ & 65 & 1040 & 2.5E-7 & 101 & 18 & 952 & 4.9E-7 & 65 & 65 & 1040 & 2.6E-7 & 116 & 1856 & 9.1E-6 & 145 & 22 & 1336 & 7.2E-6 & 130 & 68 & 1584 & 3.0E-6 \\ \cline{4-25}
        ~  & ~ & ~ & 39 & 1248 & 2.3E-7 & 46 & 23 & 1104 & 5.2E-7 & 39 & - & 1248 & 2.3E-7 & 85 & 2720 & 5.8E-7 & 109 & 12 & 1936 & 7.6E-6 & 86 & 73 & 2544 & 3.6E-7 \\ \cline{4-25}
        ~ & ~ & ~ & 24 & 1536 & 6.4E-7 & 24 & 21 & 1440 & 7.0E-7 & 24 & - & 1536 & 6.5E-7 & 53 & 3392 & 8.2E-7 & 73 & 15 & 2816 & 2.00E-6 & 56 & 48 & 3328 & 1.6E-7 \\ \hline\hline
         \multirow{6}{*}{\rotatebox[origin=c]{90}{\ani} }   & \multirow{6}{*}{\rotatebox[origin=c]{90}{77}} & \multirow{6}{*}{\rotatebox[origin=c]{90}{5.8E-7}} & 74 & 148 & 8.9E-7 & 76 & 31 & 107 & 7.3E-7 & 74 & 61 & 135 & 9.2E-7 & 75 & 150 & 4.4E-7 & 78 & 31 & 109 & 5.6E-7 & 76 & 61 & 137 & 4.0E-7 \\\cline{4-25}
        ~  & ~ & ~ & 68 & 272 & 6.19E-7 & 71 & 27 & 196 & 4.7E-7 & 69 & 51 & 240 & 6.7E-7 & 72 & 288 & 5.7E-7 & 73 & 32 & 210 & 4.9E-7 & 72 & 59 & 262 & 5.6E-7 \\ \cline{4-25}
        ~ & ~ & ~ & 64 & 512 & 7.3E-7 & 70 & 22 & 368 & 6.7E-7 & 65 & 53 & 472 & 6.4E-7 & 71 & 568 & 4.3E-7 & 71 & 34 & 420 & 5.5E-7 & 71 & 59 & 520 & 4.6E-7 \\ \cline{4-25}
        ~ & ~ & ~ & 58 & 928 & 1.8E-6 & 67 & 15 & 656 & 6.8E-7 & 59 & 47 & 848 & 1.7E-6 & 64 & 1024 & 2.3E-6 & 66 & 29 & 760 & 1.4E-6 & 65 & 54 & 952 & 1.2E-6 \\ \cline{4-25}
        ~ & ~ & ~ & 48 & 1536 & 5.8E-6 & 51 & 30 & 1296 & 4.4E-6 & 50 & 40 & 1440 & 4.3E-6 & 58 & 1856 & 3.3E-6 & 60 & 27 & 1392 & 3.0E-6 & 58 & 49 & 1712 & 3.7E-6 \\ \cline{4-25}
        ~  & ~ & ~ & 39 & 2496 & 6.3E-6 & 41 & 24 & 2080 & 1.1E-5 & 40 & 34 & 2368 & 5.7E-6 & 48 & 3072 & 3.5E-6 & 53 & 23 & 2432 & 2.5E-6 & 49 & 39 & 2816 & 2.9E-6 \\ \hline\hline
        \multirow{6}{*}{\rotatebox[origin=c]{90}{\sky} }   & \multirow{6}{*}{\rotatebox[origin=c]{90}{287}} & \multirow{6}{*}{\rotatebox[origin=c]{90}{1.7E-4}} & 201 & 402 & 6.6E-5 & 236 & 22 & 258 & 3.1E-4 & 230 & 34 & 264 & 2.9E-4 & 210 & 420 & 6.6E-4 & 246 & 30 & 276 & 2.1E-4 & 217 & 151 & 368 & 4.6E-4 \\ \cline{4-25}
        ~  & ~ & ~ & 118 & 472 & 1.7E-6 & 174 & 27 & 402 & 1.6E-5 & 135 & 83 & 436 & 2.4E-6 & 176 & 704 & 1.5E-6 & 204 & 25 & 458 & 2.7E-4 & 221 & 71 & 584 & 1.9E-4 \\ \cline{4-25}
        ~ & ~ & ~ & 74 & 592 & 1.5E-6 & 100 & 27 & 508 & 8.0E-7 & 74 & 72 & 584 & 1.6E-6 & 125 & 1000 & 1.8E-6 & 175 & 24 & 796 & 2.3E-6 & 125 & 124 & 996 & 1.8E-6 \\ \cline{4-25}
        ~  & ~ & ~ & 49 & 784 & 1.0E-6 & 55 & 21 & 608 & 1.4E-6 & 49 & 47 & 768 & 1.0E-6 & 92 & 1472 & 3.7E-7 & 115 & 21 & 1088 & 1.1E-6 & 94 & 61 & 1240 & 1.6E-6 \\ \cline{4-25}
        ~  & ~ & ~ & 37 & 1184 & 3.7E-7 & 40 & 20 & 960 & 8.2E-7 & 37 & 36 & 1168 & 3.4E-7 & 62 & 1984 & 2.1E-7 & 83 & 18 & 1616 & 2.2E-7 & 62 & - & 1984 & 2.1E-7 \\ \cline{4-25}
        ~  & ~ & ~ & 23 & 1472 & 3.5E-7 & 23 & 17 & 1280 & 1.1E-6 & 23 & 22 & 1440 & 3.9E-7 & 42 & 2688 & 1.5E-7 & 56 & 15 & 2272 & 4.4E-7 & 42 & - & 2688 & 1.5E-7 \\ \hline\hline
    \end{tabular}
\end{table}

\begin{figure}[H] 
 \centering
 \hspace{-8mm}  \includegraphics[width=0.55\textwidth]{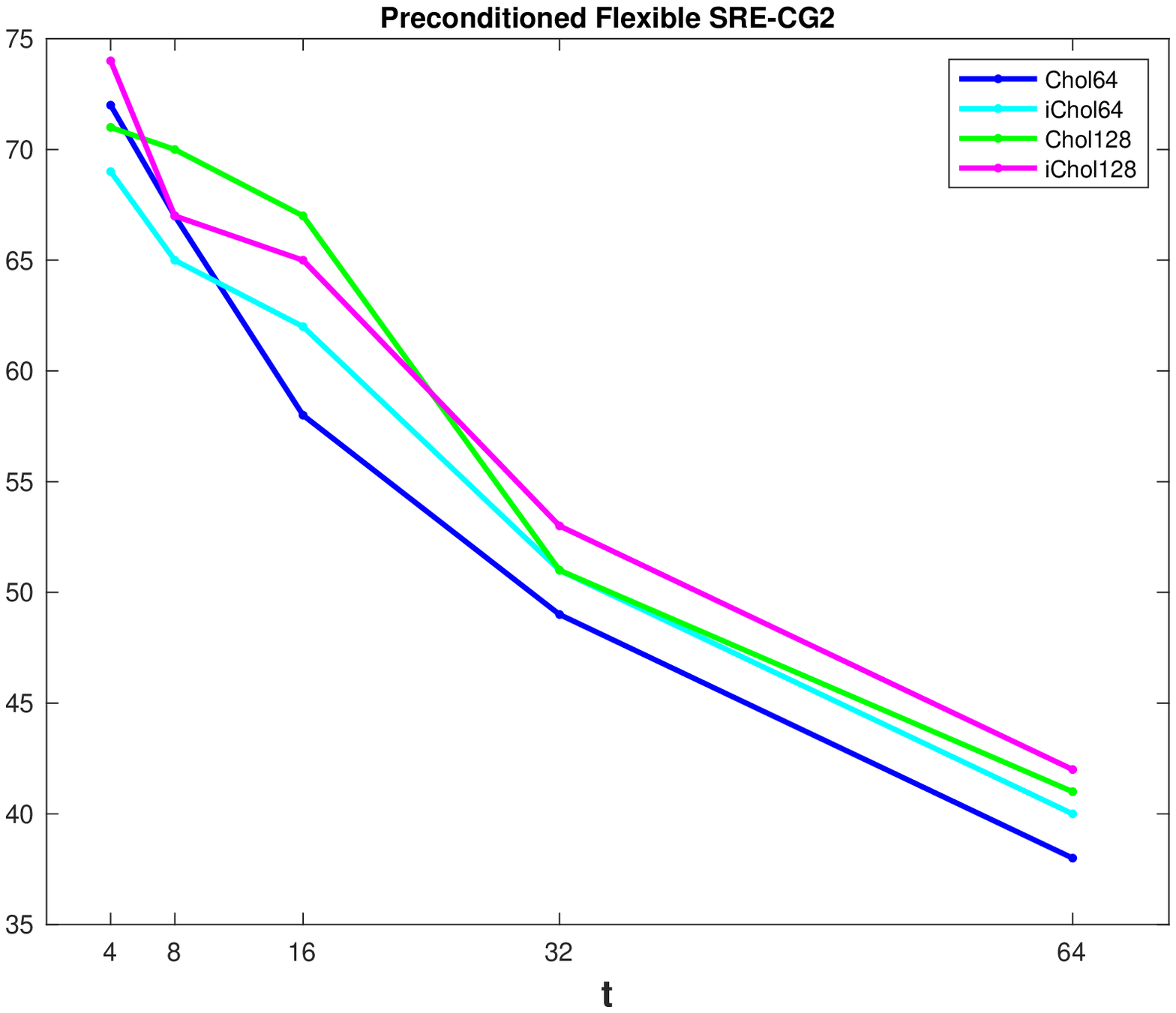} \hspace{-8mm} 
 \includegraphics[width=0.55\textwidth]{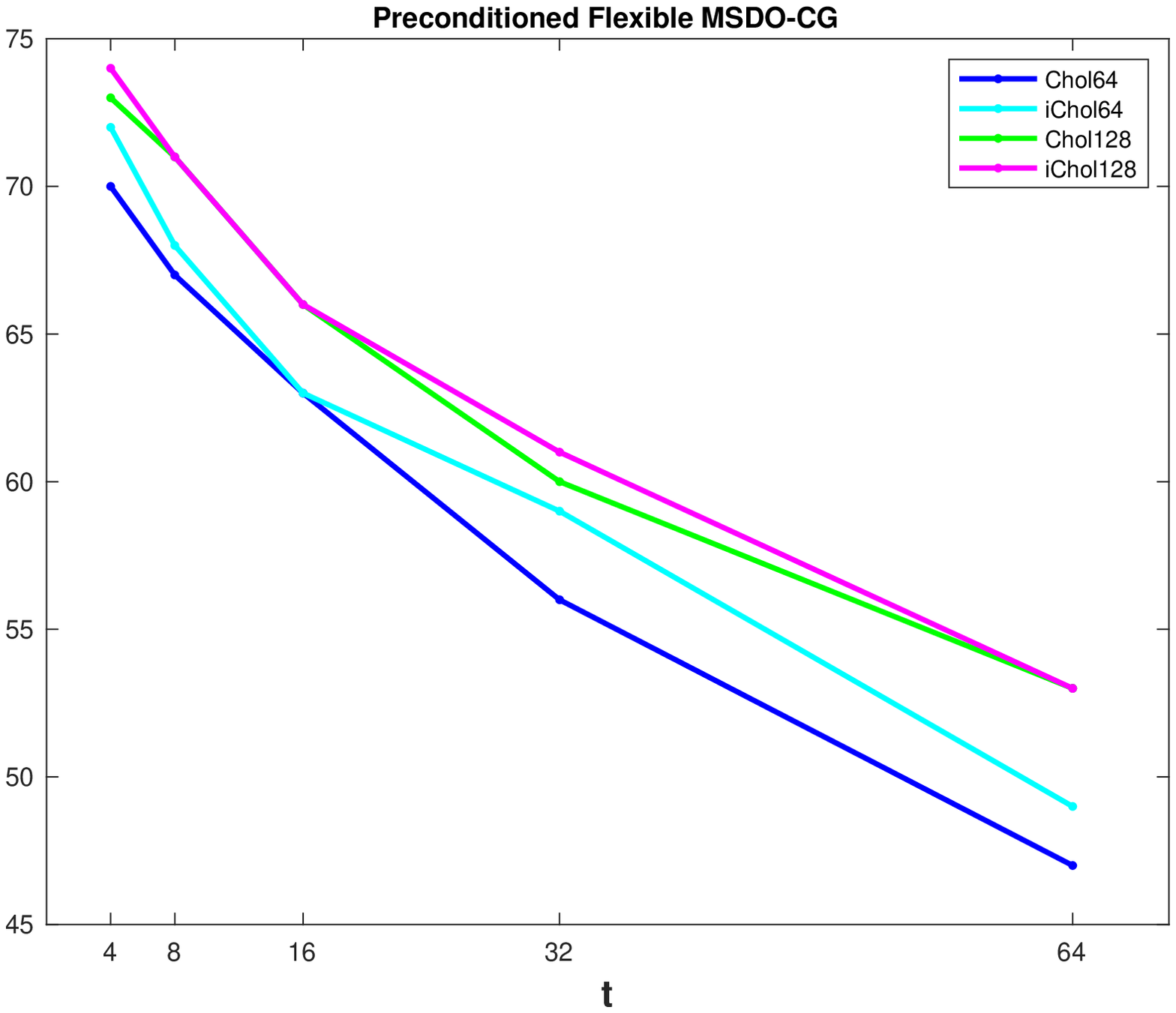}\\
\vspace{-2mm}
   \caption{Convergence of preconditioned flexibly enlarged methods with switchTol = $10^{-5}$ for matrix \anio} \label{fig:cholani}\vspace{-10mm}
\end{figure}

\begin{table}[H]
\setlength{\tabcolsep}{1pt}
\renewcommand{\arraystretch}{1.1}
\caption{\label{tab:precSRECG128ichol} Convergence and memory requirements of Block Jacobi Incomplete Cholesky 128 Preconditioned CG, SRE-CG2, flexibly SRE-CG2, MSDO-CG,and  flexibly MSDO-CG and with respect to number of partitions $\bf t=2,4,8,16,32,64$, and switch tolerances  $\bf switchTol=10^{-5},10^{-7}$. }\vspace{-3mm}
\hspace{-22mm} 
    \begin{tabular}{||c||c|c||c|c|c||c|c|c|c||c|c|c|c||c|c|c||c|c|c|c||c|c|c|c||}
    \cline{2-25}
     \multicolumn{1}{c||}{} &\multicolumn{2}{c||}{\multirow{2}{*}{\textbf{CG}}}&\multicolumn{3}{c||}{\multirow{2}{*}{\textbf{SRE-CG2} }} &\multicolumn{8}{c||}{\textbf{ Flexibly  SRE-CG2, switchTol =} }&\multicolumn{3}{c||}{\multirow{2}{*}{\textbf{MSDOCG} }} &\multicolumn{8}{c||}{\textbf{Flexibly  MSDOCG, switchTol =} } \\
    \multicolumn{1}{c||}{} &\multicolumn{1}{c}{} & &\multicolumn{1}{c}{}&\multicolumn{1}{c}{} & &\multicolumn{4}{c||}{$\bf 10^{-5}$}&\multicolumn{4}{c||}{$\bf 10^{-7}$}&\multicolumn{1}{c}{} &\multicolumn{1}{c}{} &\multicolumn{1}{c||}{}&\multicolumn{4}{c||}{$\bf 10^{-5}$}&\multicolumn{4}{c||}{$\bf 10^{-7}$} 
    \\
        \cline{2-25}
    \multicolumn{1}{c||}{} &$\bf It$&$\bf RE$&$\bf It$&$\bf mem$&$\bf RelEr$&$\bf It$&$\bf sIt$&$\bf mem$&$\bf RelEr$&$\bf It$&$\bf sIt$&$\bf mem$&$\bf RelEr$&$\bf It$&$\bf mem$&$\bf RelEr$&$\bf It$&$\bf sIt$&$\bf mem$&$\bf RelEr$&$\bf It$&$\bf sIt$&$\bf mem$&$\bf RelEr$
   \\
    \cline{1-25}
     \multirow{6}{*}{\rotatebox[origin=c]{90}{\nh} }   &  \multirow{6}{*}{\rotatebox[origin=c]{90}{119}} & \multirow{6}{*}{\rotatebox[origin=c]{90}{4.95E-7}} & 107 & 214 & 1.6E-7 & 115 & 59 & 174 & 2.4E-7 & 112 & 66 & 178 & 2.2E-7 & 114 & 228 & 1.8E-7 & 122 & 31 & 153 & 2.7E-7 & 114 & 93 & 207 & 1.9E-7 \\ \cline{4-25}
        ~ & ~ & ~ & 86 & 344 & 8.8E-8 & 90 & 55 & 290 & 1.2E-7 & 87 & 72 & 318 & 8.0E-8 & 92 & 368 & 1.2E-7 & 96 & 59 & 310 & 1.4E-7 & 92 & 80 & 344 & 1.8E-7 \\ \cline{4-25}
        ~ & ~ & ~ & 66 & 528 & 4.5E-8 & 68 & 46 & 456 & 5.0E-8 & 66 & 61 & 508 & 4.8E-8 & 74 & 592 & 8.0E-8 & 85 & 22 & 428 & 8.6E-8 & 75 & 68 & 572 & 5.6E-8 \\ \cline{4-25}
        ~ & ~ & ~ & 51 & 816 & 2.4E-8 & 51 & 36 & 696 & 3.5E-8 & 51 & 47 & 784 & 2.5E-8 & 62 & 992 & 4.7E-8 & 63 & 47 & 880 & 3.9E-8 & 62 & 59 & 968 & 4.8E-8 \\\cline{4-25}
        ~ & ~ & ~ & 39 & 1248 & 9.1E-9 & 39 & 30 & 1104 & 1.4E-8 & 39 & 37 & 1216 & 9.4E-9 & 49 & 1568 & 2.3E-8 & 53 & 23 & 1216 & 3.4E-8 & 49 & 47 & 1536 & 2.4E-8 \\ \cline{4-25}
        ~ & ~ & ~ & 29 & 1856 & 8.6E-9 & 29 & 24 & 1696 & 1.1E-8 & 29 & 28 & 1824 & 8.8E-9 & 37 & 2368 & 1.1E-8 & 37 & 32 & 2208 & 1.9E-8 & 37 & - & 2368 & 1.2E-8 \\ \hline\hline
         \multirow{6}{*}{\rotatebox[origin=c]{90}{\skyt} }   &  \multirow{6}{*}{\rotatebox[origin=c]{90}{266}} &  \multirow{6}{*}{\rotatebox[origin=c]{90}{1.00E-5}} & 236 & 472 & 5.3E-6 & 251 & 20 & 271 & 1.0E-5 & 236 & 141 & 377 & 7.0E-6 & 265 & 530 & 1.9E-5 & 255 & 32 & 287 & 1.2E-5 & 271 & 144 & 415 & 1.7E-5 \\\cline{4-25}
        ~ & ~ & ~ & 180 & 720 & 3.8E-6 & 223 & 14 & 474 & 7.8E-6 & 177 & 141 & 636 & 9.8E-6 & 236 & 944 & 8.0E-6 & 260 & 22 & 564 & 1.1E-5 & 234 & 167 & 802 & 1.2E-5 \\ \cline{4-25}
        ~ & ~ & ~ & 129 & 1032 & 5.6E-7 & 178 & 14 & 768 & 2.3E-6 & 136 & 104 & 960 & 4.1E-6 & 163 & 1304 & 2.3E-6 & 225 & 31 & 1024 & 7.0E-6 & 170 & 117 & 1148 & 1.1E-5 \\ \cline{4-25}
        ~ & ~ & ~ & 72 & 1152 & 2.8E-7 & 98 & 34 & 1056 & 1.5E-6 & 73 & 68 & 1128 & 4.4E-7 & 126 & 2016 & 3.4E-6 & 152 & 20 & 1376 & 1.2E-5 & 128 & 105 & 1864 & 3.4E-6 \\ \cline{4-25}
        ~ & ~ & ~ & 42 & 1344 & 5.1E-7 & 60 & 16 & 1216 & 4.6E-7 & 42 & - & 1344 & 5.1E-7 & 89 & 2848 & 8.0E-7 & 109 & 30 & 2224 & 1.4E-6 & 89 & 83 & 2752 & 8.2E-7 \\ \cline{4-25}
        ~ & ~ & ~ & 27 & 1728 & 4.5E-7 & 27 & 25 & 1664 & 5.1E-7 & 27 & - & 1728 & 4.5E-7 & 57 & 3648 & 2.1E-7 & 70 & 28 & 3136 & 3.2E-6 & 60 & 51 & 3552 & 1.3E-7 \\ \hline \hline
         \multirow{6}{*}{\rotatebox[origin=c]{90}{\ani} }   &  \multirow{6}{*}{\rotatebox[origin=c]{90}{77}} &  \multirow{6}{*}{\rotatebox[origin=c]{90}{1.08E-6}} & 75 & 150 & 1.1E-6 & 77 & 32 & 109 & 1.2E-6 & 75 & 61 & 136 & 1.4E-6 & 76 & 152 & 3.8E-7 & 81 & 13 & 94 & 1.3E-6 & 76 & 62 & 138 & 3.7E-7 \\ \cline{4-25}
        ~ & ~ & ~ & 68 & 272 & 7.0E-7 & 74 & 22 & 192 & 4.9E-7 & 69 & 54 & 246 & 7.1E-7 & 72 & 288 & 7.4E-7 & 74 & 32 & 212 & 5.4E-7 & 73 & 59 & 264 & 5.9E-7 \\ \cline{4-25}
        ~ & ~ & ~ & 65 & 520 & 7.4E-7 & 67 & 27 & 376 & 8.5E-7 & 65 & 53 & 472 & 7.7E-7 & 71 & 568 & 5.2E-7 & 71 & 32 & 412 & 5.3E-7 & 71 & 58 & 516 & 5.2E-7 \\ \cline{4-25}
        ~ & ~ & ~ & 59 & 944 & 1.5E-6 & 65 & 22 & 696 & 1.8E-6 & 60 & 51 & 888 & 1.8E-6 & 64 & 1024 & 2.1E-6 & 66 & 29 & 760 & 1.4E-6 & 64 & 54 & 944 & 1.8E-6 \\ \cline{4-25}
        ~ & ~ & ~ & 49 & 1568 & 4.9E-6 & 53 & 28 & 1296 & 3.0E-6 & 50 & 40 & 1440 & 4.8E-6 & 59 & 1888 & 3.1E-6 & 61 & 28 & 1424 & 3.0E-6 & 60 & 50 & 1760 & 2.6E-6 \\ \cline{4-25}
        ~ & ~ & ~ & 39 & 2496 & 7.8E-6 & 42 & 24 & 2112 & 8.3E-6 & 40 & 31 & 2272 & 8.8E-6 & 50 & 3200 & 2.0E-6 & 53 & 24 & 2464 & 3.4E-6 & 51 & 43 & 3008 & 1.5E-6 \\ \hline \hline
             \multirow{6}{*}{\rotatebox[origin=c]{90}{\sky} }   &  \multirow{6}{*}{\rotatebox[origin=c]{90}{338}} &  \multirow{6}{*}{\rotatebox[origin=c]{90}{1.13E-4}} & 240 & 480 & 7.5E-5 & 279 & 33 & 312 & 4.3E-4 & 264 & 134 & 398 & 2.0E-4 & 281 & 562 & 5.9E-4 & 276 & 33 & 309 & 6.6E-4 & 282 & 65 & 347 & 3.5E-4 \\ \cline{4-25}
        ~ & ~ & ~ & 148 & 592 & 1.3E-6 & 226 & 30 & 512 & 2.0E-6 & 158 & 102 & 520 & 1.3E-6 & 195 & 780 & 2.7E-4 & 262 & 25 & 574 & 5.7E-4 & 301 & 36 & 674 & 9.9E-5 \\ \cline{4-25}
        ~ & ~ & ~ & 87 & 696 & 1.4E-6 & 119 & 24 & 572 & 7.9E-7 & 87 & 83 & 680 & 1.4E-6 & 143 & 1144 & 1.4E-6 & 179 & 24 & 812 & 3.7E-4 & 153 & 99 & 1008 & 2.6E-6 \\ \cline{4-25}
        ~ & ~ & ~ & 57 & 912 & 1.0E-6 & 67 & 23 & 720 & 2.5E-6 & 57 & 54 & 888 & 1.3E-6 & 101 & 1616 & 6.3E-7 & 132 & 23 & 1240 & 1.8E-6 & 101 & 100 & 1608 & 6.4E-7 \\ \cline{4-25}
        ~ & ~ & ~ & 42 & 1344 & 3.4E-7 & 45 & 24 & 1104 & 9.0E-7 & 42 & 41 & 1328 & 3.7E-7 & 72 & 2304 & 7.1E-7 & 93 & 21 & 1824 & 1.1E-6 & 72 & - & 2304 & 7.2E-7 \\ \cline{4-25}
        ~ & ~ & ~ & 26 & 1664 & 4.8E-7 & 27 & 20 & 1504 & 4.9E-7 & 26 & 25 & 1632 & 5.4E-7 & 50 & 3200 & 1.5E-7 & 63 & 17 & 2560 & 7.6E-7 & 50 & - & 3200 & 1.5E-7 \\ \hline \hline
         \end{tabular}
\end{table}

\begin{figure}[H] 
 \centering
 \hspace{-8mm}  \includegraphics[width=0.55\textwidth]{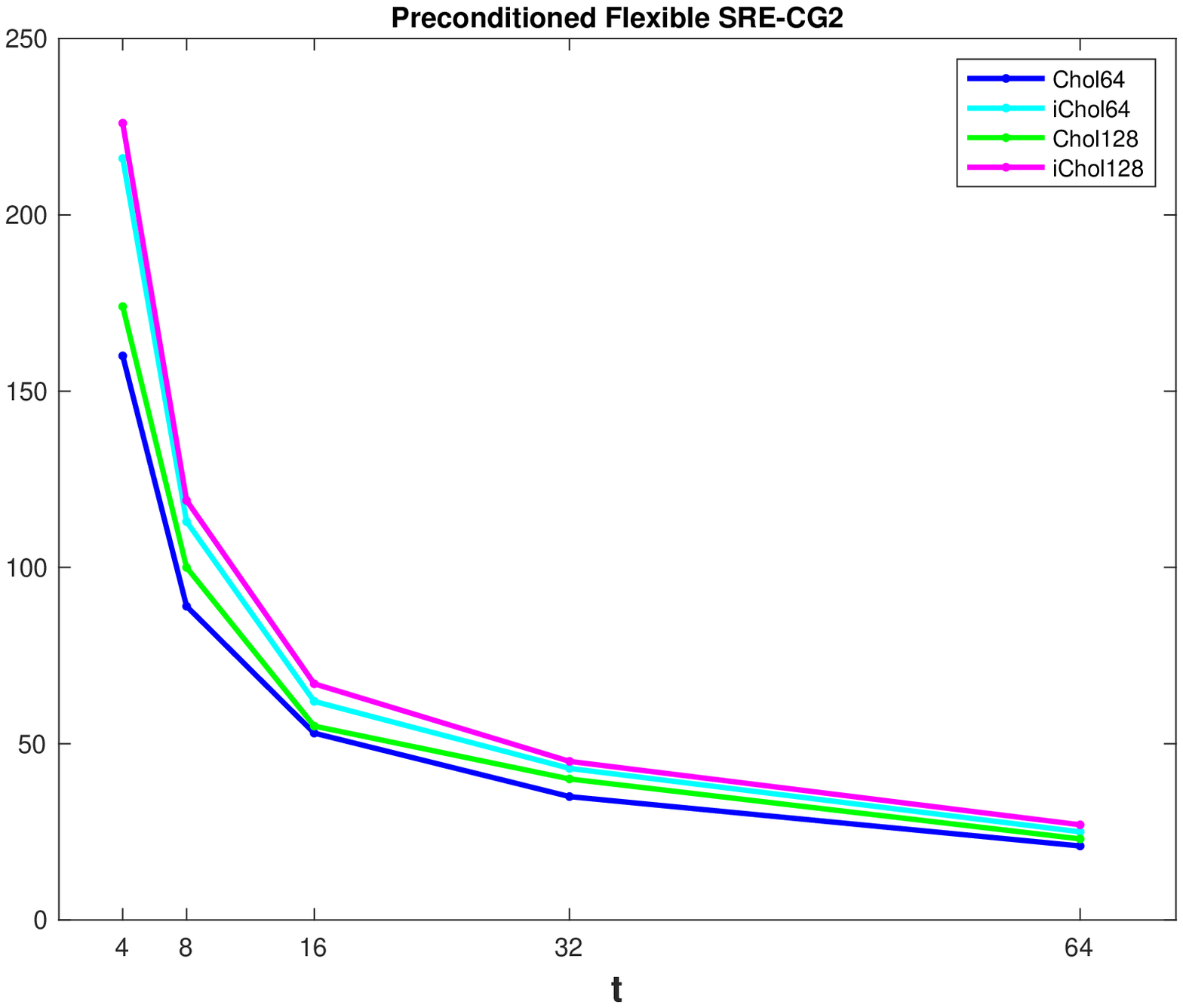} \hspace{-8mm} 
 \includegraphics[width=0.55\textwidth]{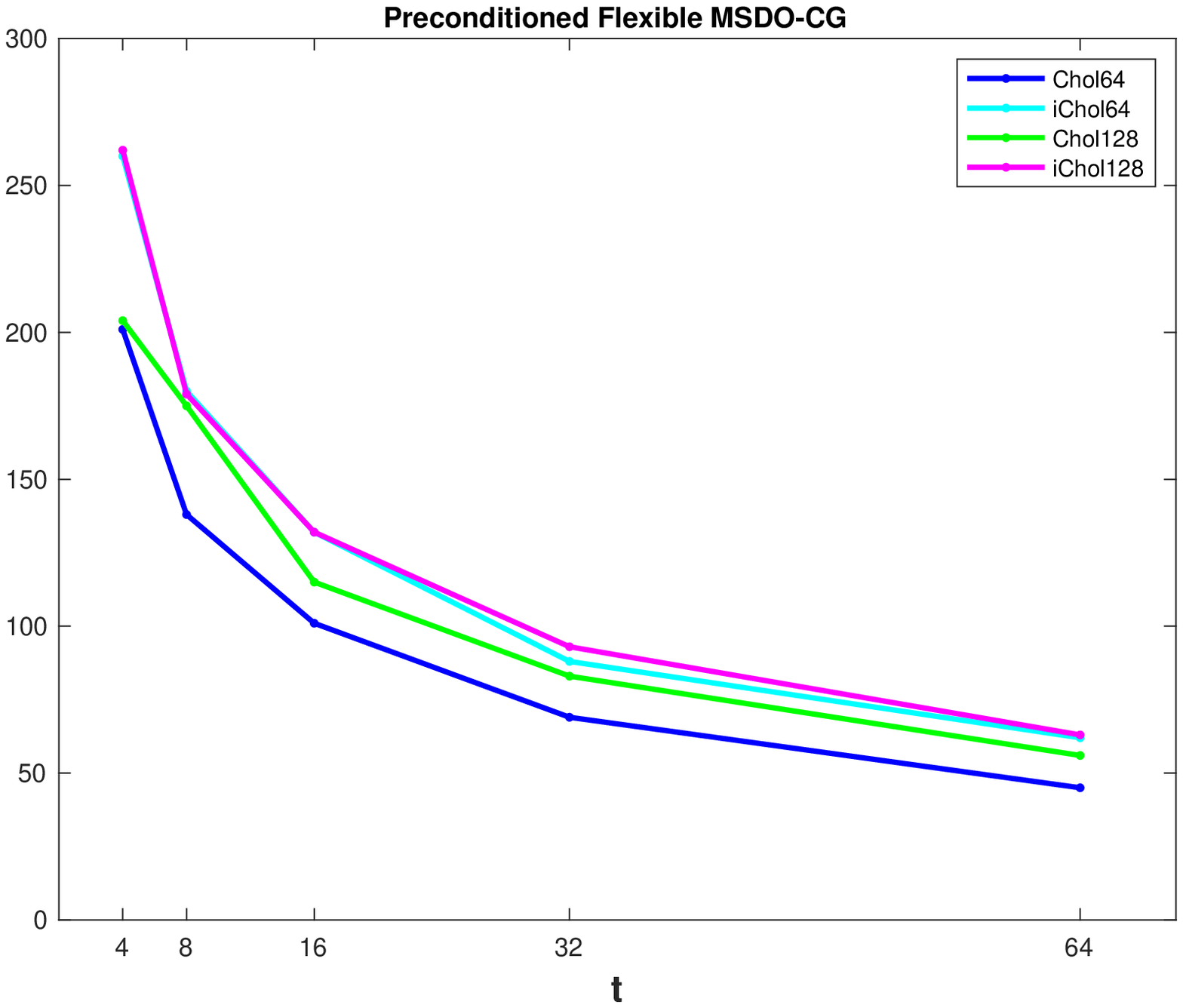}\\
\vspace{-2mm}
   \caption{Convergence of preconditioned flexibly enlarged methods with switchTol = $10^{-5}$ for matrix \skyo} \label{fig:cholsky2}
\end{figure}
Clearly, the Cholesky Block Jacobi preconditioners reduces the iterations more than the incomplete Cholesky version  (Chol64 vs iChol64, Chol128 vs iChol128). Similarly, using 64 blocks reduces the iterations more than 128 blocks (Chol64 vs Chol128, iChol64 vs iChol128).

 Comparing the 4 versions (Chol64, iChol64, Chol128, iChol128) for the 4 matrices, Chol64 requires the least number of iterations whereas iChol128 requires the most number of iterations with an increase of around 30$\%$. As for iChol64 and Chol128, for the matrices \nho \,and \skyo, Chol128 requires less iterations then iChol64, contrarily to the matrices \anio \, and \skyto.

\section{Conclusion}
In this paper we discussed different options for reducing the memory requirements of the introduced  enlarged CG methods, specifically SRE-CG2  and MSDO-CG, without affecting much their convergence.
We considered two options that require a fixed memory. The first is truncating the A-orthonormalization process, and the second is restarting after a fixed number of iteration.
For SRE-CG2, it was proven theoretically \cite{EKS} that it is possible to truncate the A-orthonormalization process for some preset $trunc$ value. However, if this  $trunc$ value is too small relative to the required iterations to convergence by SRE-CG2, then the truncated version will require much more iterations,  but still less than CG. For MSDO-CG, truncation doesn't necessarily lead to convergence, as it is not guaranteed theoretically. As for restarting after a fixed number of iterations, it leads to stagnation and the method may not converge in $k_{max}$ iterations.

Thus, we introduced
flexibly enlarged versions where after some ``switch" iteration, once the relative difference of residual norms is less than a given swichTol, the number of computed vectors is reduced to half. We proved that the flexibly enlarged Krylov subspace is a superset to the classical Krylov subspace, thus the introduced methods will converge in less iterations than CG (which is validated in the numerical testings). Moreover, the convergence of the flexibly enlarged methods is within the range of convergence of the corresponding methods with $t$ and $t/2$ vectors per iteration.
And it is observed that setting swichTol $=10^{-5}$ is the moderate choice that balances between the reduced memory and the augmented iterations to convergence.

Accordingly, we tested some restarted versions based on the same switching condition. The first restarts  every time the relative difference of the residuals is less than the restart tolerance. Unlike the restarted version with fixed restart iterations, it convergence within the $k_{max}$ iterations and in less iterations than CG, but more iterations than the corresponding method (up to double the iterations) where the memory is reduced.  The second restarts only once. This reduces the augmented iterations of the first restarted version, but increases the memory requirements. The third is restarted flexibly enlarged versions that  restart only once, but $t$ is halved after the restart. This further reduces the memory requirements, but at the the expense of requiring more iterations than the second restarted version and the flexibly enlarged version. 

Hence, the introduced flexibly enlarged methods with swichTol $=10^{-5}$ are the moderate choice that balances between the reduced memory and the augmented iterations to convergence. Moreover, by preconditioning these flexibly enlarged methods, the iterations to convergence are significantly reduced,  further reducing the memory requirements.

\end{document}